\newcommand{\lang}[1]{\mathcal{L}(#1)}
\newcommand{\gh}{\mathcal{H}}
\newcommand{\gr}{\mathcal{R}}
\newcommand{\gl}{\mathcal{L}}
\newcommand{\gp}{\mathcal{P}}
\newcommand{\gd}{\mathcal{D}}
\newcommand{\gj}{\mathcal{J}}
\newcommand{\gq}{\mathcal{Q}}
\newcommand{\bbe}{\mathbb{E}}
\newcommand{\bbp}{\mathbb{P}}
\newcommand{\bbq}{\mathbb{Q}}
\newcommand{\bbr}{\mathbb{R}}
\newcommand{\bbs}{\mathbb{S}}
\newcommand{\bbf}{\mathbb{F}}
\newtheorem{thm}{Theorem}
\newtheorem{cor}{Corollary}
\newtheorem{defn}{Definition}
\newtheorem{lem}{Lemma}
 \theoremstyle{plain}
 \newtheorem{case}{Case}
\newcommand{\lb}{\langle}
\newcommand{\rb}{\rangle}
\begin{document}


\title[Homotopy Bases for Subgroups]{Homotopy Bases and Finite Derivation Type \\ for Subgroups of Monoids}
\keywords{Finitely presented groups and monoids, Finiteness conditions, Homotopy bases, Finite derivation type}
\maketitle

\begin{center}

    R. D. GRAY\footnote{The first author was supported by an EPSRC Postdoctoral Fellowship EP/E043194/1 held at the University of St Andrews, Scotland.}

    \medskip

    School of Mathematics, University of East Anglia \\ Norwich NR4 7TJ, U.K.

    \medskip

    \texttt{Robert.D.Gray@uea.ac.uk}

    \bigskip

    \bigskip

    A. MALHEIRO\footnote{This project was developed within the project PEst-OE/MAT/UI0143/2014 of CAUL, financed by FCT and FEDER.}

    \medskip

    Departamento de Matem\'{a}tica, Faculdade de Cincias e Tecnologia \\Universidade Nova de Lisboa, 2829--516 Caparica, Portugal. \\ Centro de \'{A}lgebra da Universidade de Lisboa \\ Av. Prof. Gama Pinto, 2, 1649--003 Lisboa, Portugal. 

    \medskip

    \texttt{ajm@fct.unl.pt} \\
\end{center}

\begin{abstract}
Given a monoid defined by a presentation, and a homotopy base for the derivation graph associated to the presentation, and given an arbitrary subgroup of the monoid, we give a homotopy base (and presentation) for the subgroup. If the monoid has finite derivation type ($\mathrm{FDT}$), and if under the action of the monoid on its subsets by right multiplication the strong orbit of the subgroup is finite, then we obtain a finite homotopy base for the subgroup, and hence the subgroup has $\mathrm{FDT}$. As an application we prove that a regular monoid with finitely many left and right ideals has $\mathrm{FDT}$ if and only if all of its maximal subgroups have $\mathrm{FDT}$. We use this to show that a finitely presented regular monoid with finitely many left and right ideals satisfies the homological finiteness condition $\mathrm{FP}_3$ if all of its maximal subgroups satisfy the condition $\mathrm{FP}_3$. 

\

\noindent \textit{2000 Mathematics Subject Classification:} 20M50 (primary),  20M05, 68Q42 (secondary)
\end{abstract}



\section{Introduction}

Geometric methods play an important role in combinatorial group theory (see \cite{LyndonandSchupp}). More recently, in \cite{Squier1} Squier (and in independent work Pride \cite{Pride2}, and Kilibarda \cite{Kilibarda1}) has developed a homotopy theory for monoids.  
Given a monoid presentation, Squier constructs a graph, called a \emph{derivation graph}, 
and then defines certain equivalence relations called \emph{homotopy relations} on the set of paths in the graph. A set of closed paths that generates the full homotopy relation is called a \emph{homotopy base}. A monoid defined by a finite presentation is said to have \emph{finite derivation type} ($\mathrm{FDT}$) if there is a finite homotopy base for the presentation. Squier showed that 
$\mathrm{FDT}$ is a property of the monoid, in the sense that it is independent of the choice of (finite) presentation. 

The homotopical finiteness condition $\mathrm{FDT}$ was originally introduced as a tool for the study of finite string rewriting systems. 
It was shown in \cite{Squier1} that if a monoid is defined by a finite complete rewriting system then that monoid has $\mathrm{FDT}$. In \cite{K2} Kobayashi showed that every one-relator monoid has $\mathrm{FDT}$; it is still an open question whether every one-relator monoid admits a finite complete rewriting system. 
More background on the relationship with finite complete rewriting systems may be found in \cite{OttoSurvey}. 

There are also important connections to the homology theory of monoids. If a monoid has $\mathrm{FDT}$ then it must satisfy the homological finiteness condition $\mathrm{FP}_3$, and for groups $\mathrm{FDT}$ and $\mathrm{FP}_3$ are equivalent; see \cite{Cremanns5}, \cite{Pride2}, \cite{Lafont} and \cite{Cremanns1}. In \cite{Pride4} it was shown that for monoids $\mathrm{FDT}$ and the homological finiteness condition $\mathrm{FHT}$ (in the sense of \cite{Wang8}) are not equivalent. In addition to this, Squier's homotopy theory has been applied in \cite{Guba} to the study of \emph{diagram groups}, which are precisely fundamental groups of Squier complexes of monoid presentations. 

One important area in the study of finiteness conditions is the consideration of their closure properties. Given a finiteness condition how does the property holding in a monoid relate to it holding in the substructures of that monoid (and vice versa)? For groups when passing to finite index subgroups, or finite index extensions, the properties of being finitely generated, finitely presented, having $\mathrm{FP}_n$, or having $\mathrm{FDT}$ are all preserved; see \cite{Magnus1}, \cite{Brown}. Results about the behaviour of $\mathrm{FDT}$ for monoids, when passing to submonoids and extensions appear in \cite{Wang1}, \cite{Malheiro1}, \cite{Pride4}, \cite{Malheiro3}, \cite{Wang7} and \cite{Malheiro2}.    

In this paper we investigate the relationship between the property $\mathrm{FDT}$ holding in a monoid, and the property holding in the subgroups of that monoid. Results of this type have been obtained for 
the properties of being finitely generated and presented (see \cite{Ruskuc2}), and for being residually finite (see \cite{Golubov1}). Specifically, given a monoid $S$ defined by a presentation $\mathcal{P}_S$, given an associated homotopy base $X$, and given a subgroup $G$ of $S$, we show how to obtain a homotopy base $Y$ for a presentation $\mathcal{P}_G$ of $G$ (this will be done in Section~3). Moreover, when the presentation $\mathcal{P}_S$ and the homotopy base $X$ are both finite, and $G$ has only finitely many \emph{cosets} in $S$ (where a coset of $G$ is a set $Gs$, with $s \in S$, such that there exists $s' \in S$ with $Gss'=G$) then the presentation $\mathcal{P}_G$ and homotopy base $Y$ will both be finite. A subgroup $G$ of a monoid $S$ with finitely many cosets, in the above sense, is said to have finite \emph{translational index}. This is equivalent to saying that $G$ acts on the left of its $\gr$-class with finite quotient. This notion of index was first considered in \cite{Ruskuc2} where it was shown that the properties of being finitely generated and presented are both inherited by subgroups with finite translational index (see also \cite{Steinberg1} for a quick topological proof of this result, in the special case of inverse semigroups). Note that in \cite{Ruskuc2} the author uses the term \emph{index} rather than translational index for the number of (right) cosets (in the above sense) of a subgroup of a monoid, but here we have opted for translational index to distinguish it from the various other notions of index for that have appeared in the literature; see \cite{Bergman}, \cite{Ruskuc&Thomas} and \cite{Gray1} for example. 

This gives our first main result.  

\begin{thm}\label{bigtosmall}
Let $S$ be a monoid and let $G$ be a subgroup of $S$. If $S$ has finite derivation type,
and $G$ has finite translational index in $S$, then $G$ has finite derivation type.
\end{thm}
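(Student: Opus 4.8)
The plan is to carry out a Reidemeister--Schreier type rewriting, adapted from combinatorial group theory to Squier's homotopy theory for monoids. Because $\mathrm{FDT}$ does not depend on the choice of finite presentation, I would begin by fixing a finite presentation $\gp_S = \lb A \mid R \rb$ of $S$ together with a finite homotopy base $X$ for its derivation graph $\Gamma_S$. The objective is then to produce, from these finite data, a finite presentation $\gp_G = \lb B \mid Q \rb$ of $G$ and a finite homotopy base $Y$ for the associated derivation graph $\Gamma_G$, which by definition gives $G$ finite derivation type. The finite translational index hypothesis is used exactly as the statement that $G$ has only finitely many cosets $Gs$, equivalently that $G$ acts on the left of its $\gr$-class with finitely many orbits; accordingly I would fix a finite transversal $T$ for these cosets to serve as a Schreier transversal.

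Next I would set up the rewriting. Writing $\bb{w}$ for the representative in $T$ of the coset determined by a word $w$, each pair $(t,a) \in T \times A$ for which $Gta$ is again a coset yields a Schreier generator representing the element $ta\,\bb{ta}^{-1}$ of $G$, and one obtains a rewriting map carrying any word over $A$ that represents an element of $G$ to a word over these generators. This is precisely the mechanism by which a finite presentation $\gp_G$ of $G$ is extracted from $\gp_S$ in \cite{Ruskuc2}, and I would take $\gp_G$ to be that presentation, so that $B$ and $Q$ are both finite.

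The substance of the argument is the construction of $Y$, which I would carry out geometrically. Each element of $X$ is a pair of parallel paths in $\Gamma_S$; for every $t \in T$ I would translate such a pair on the left by $t$, rewrite it through the Schreier map, and so obtain a pair of parallel paths in $\Gamma_G$, gathering all of these into a first family of closed paths. To this I would adjoin a bookkeeping family recording the homotopies forced by the rewriting itself --- the Tietze-type moves relating a word to its rewritten form --- so that the interaction of the rewriting with left and right multiplication, and with disjoint applications of relations, is kept under control. Since $X$ and $T$ are both finite, the set $Y$ assembled in this way is finite.

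The main obstacle, and the only step demanding real work, is to show that $Y$ is in fact a homotopy base for $\gp_G$: every closed path in $\Gamma_G$ must lie in the homotopy relation generated by $Y$ and the trivial moves. Here I would exploit that each generator in $B$ represents a fixed word over $A$, so that any closed path $p$ in $\Gamma_G$ can be realized, via this substitution, as a closed path $\wh{p}$ in $\Gamma_S$. Because $X$ is a homotopy base for $S$, the path $\wh{p}$ is homotopic to the empty path through applications of $X$, left and right multiplications, and disjointness moves; I would then push this homotopy back down through the Schreier rewriting and verify, one elementary move at a time, that each step descends to a step generated by $Y$. The delicate feature is that $S$ need not be cancellative, so the whole lifting and projection must be organised inside the $\gr$-class of $G$, where left translation by $G$ is injective. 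This is exactly what the finite translational index hypothesis secures, and it is simultaneously what makes the projection well defined and what guarantees that the finitely many translates of $X$ appearing in $Y$ already suffice.
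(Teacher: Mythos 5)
Your proposal is correct and follows essentially the same route as the paper: you use Ru\v{s}kuc's Reidemeister--Schreier type presentation of $G$, take as homotopy base the coset-translates of $X$ pushed through the rewriting map together with a bookkeeping family tying each word to its rewritten form (the paper's sets $K$ and $W$, with the paths $\Lambda_w$), and prove the base property by lifting a closed path of $\Gamma(\mathcal{Q})$ into $\Gamma(\mathcal{P})$ via the substitution map $\psi$, contracting it there using $X$, and projecting the contraction back through the rewriting map $\phi$ one elementary move at a time. This is precisely the strategy of Theorem~\ref{homotopybase} and Lemma~\ref{KcupW} in the paper, so your plan matches the paper's proof in both construction and method.
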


Theorem~\ref{bigtosmall} proves one direction of our second main result.

\begin{thm}\label{main_0}
Let $S$ be a regular monoid with finitely many left and right ideals. Then $S$ has finite derivation type if and only if every maximal subgroup of $S$ has finite derivation type.
\end{thm}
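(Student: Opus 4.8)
The plan is to treat the two implications separately, obtaining the forward direction directly from Theorem~\ref{bigtosmall} and the converse from the Rees matrix structure of the principal factors.

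For the forward implication, suppose $S$ has $\mathrm{FDT}$ and let $G = H_e$ be the maximal subgroup at an idempotent $e \in S$. First I would verify that $G$ has finite translational index. The key observation is that the cosets $Gs$ of $G$, in the sense of the introduction, are governed by the left action of $G$ on its $\gr$-class $R_e$: since $e$ acts as a left identity on $R_e$, left multiplication by $G$ carries each $a \in R_e$ onto its whole $\gh$-class $H_a = Ga$, so the orbits of this action are exactly the $\gh$-classes contained in $R_e$. As $S$ has only finitely many left ideals it has only finitely many $\gl$-classes, whence $R_e$ meets only finitely many of them and is a union of finitely many $\gh$-classes. Thus $G$ acts on $R_e$ with finite quotient, i.e.\ $G$ has finite translational index, and Theorem~\ref{bigtosmall} gives that $G$ has $\mathrm{FDT}$.

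For the converse, suppose every maximal subgroup of $S$ has $\mathrm{FDT}$; I would assemble a finite homotopy base for $S$ using the structure theory of regular semigroups with finitely many ideals. Since $S$ has finitely many left and right ideals it has finitely many $\gj$-classes (which here coincide with the $\gd$-classes, by stability), and these form a finite poset, so $S$ admits a principal series $\emptyset = I_0 \subset I_1 \subset \cdots \subset I_m = S$ in which each $I_k$ is an ideal and each Rees quotient $I_k/I_{k-1}$ is a principal factor. Each such factor is a regular, hence completely $0$-simple (completely simple for the minimal ideal) semigroup, and by Rees's theorem is isomorphic to a Rees matrix semigroup $M^0[G_k; I_k', \Lambda_k; P_k]$ over a maximal subgroup $G_k$, with the index sets $I_k'$ and $\Lambda_k$ finite because $S$ has finitely many $\gr$- and $\gl$-classes. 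Invoking the known result that a finite Rees matrix semigroup over a group with $\mathrm{FDT}$ again has $\mathrm{FDT}$ (see \cite{Malheiro1}, \cite{Malheiro2}), each principal factor $I_k/I_{k-1}$ has $\mathrm{FDT}$. I would then proceed by induction along the series, viewing $I_k$ as an ideal extension of $I_{k-1}$ by the factor $I_k/I_{k-1}$ and applying results on the preservation of $\mathrm{FDT}$ under ideal extensions (see \cite{Wang1}, \cite{Wang7}, \cite{Malheiro3}), to conclude that each $I_k$, and finally $S = I_m$, has $\mathrm{FDT}$.

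The main obstacle is the gluing step in the converse. Because $\mathrm{FDT}$ is a two-dimensional (homotopical) invariant, assembling a finite homotopy base for an ideal extension out of finite homotopy bases of the ideal and of the Rees quotient is delicate: one must produce not only generators and defining relations reconciling the two pieces, but also enough closed-path relations to collapse every homotopy arising from the interaction between the ideal and the quotient, and then verify that the resulting set still generates the full homotopy relation of $S$. Controlling these interaction cells, and checking that finiteness is maintained at every stage of the induction, is where the technical weight of the argument lies.
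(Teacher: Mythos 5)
Your forward direction is exactly the paper's argument (Corollary~\ref{maxsubgroupFDT}): the right cosets of a maximal subgroup are precisely the $\gh$-classes in its $\gr$-class, there are finitely many of them because $S$ has finitely many left ideals, and Theorem~\ref{bigtosmall} then applies. The skeleton of your converse---induction along the finite poset of $(\gj=\gd)$-classes, with each factor a completely $0$-simple Rees matrix semigroup over a maximal subgroup of $S$ with finite index sets---is also the paper's skeleton (the paper peels off a maximal $\gj$-class instead of building up a principal series, which is the same induction read in the other direction).

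The genuine gap is the step you yourself flag as the ``main obstacle'': the preservation of $\mathrm{FDT}$ under ideal extension by a completely $0$-simple semigroup is not a result you can cite, it is precisely what has to be proved, and proving it occupies essentially all of Section~4 of the paper (Theorem~\ref{comp0simpExt}, with its explicit presentation $\mathcal{P}_S$ and the finite homotopy base $X = X_1 \cup X_1' \cup X_2 \cup X_3 \cup X_e \cup X_G \cup X_T$). The references you invoke do not supply it: \cite{Wang1} treats \emph{small} extensions, i.e.\ $S \setminus T$ finite; \cite{Malheiro3} treats \emph{large} ideals, again with finite complement; and \cite{Wang7} concerns quotients by congruences. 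In your induction both the ideal $I_{k-1}$ and the factor $I_k/I_{k-1}$ are in general infinite, so none of these results apply, and no general theorem of the form ``$T$ has $\mathrm{FDT}$ and $S/T$ has $\mathrm{FDT}$ imply $S$ has $\mathrm{FDT}$'' was available---this is exactly why the paper proves Theorem~\ref{comp0simpExt}, constructing the interaction cells $X_1, X_1', X_2, X_3, X_e$ by hand and verifying via Lemmas~\ref{Lemma1}--\ref{wrappingup} that they generate the full homotopy relation. A minor structural remark: once Theorem~\ref{comp0simpExt} is in hand, your intermediate appeal to \cite{Malheiro1} (that each principal factor itself has $\mathrm{FDT}$) becomes unnecessary, since the hypotheses of that theorem are phrased directly in terms of the group $G$ rather than the quotient; the paper also needs a small extra observation (that $N^0$ having $\mathrm{FDT}$ implies $N$ does) to handle the minimal, completely simple, $\gj$-class, which your series-based formulation would need as well.
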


Combining Theorem~\ref{main_0} with results of Cremanns and Otto \cite{Cremanns5}, \cite{Cremanns1} we obtain the following.

\begin{cor}\label{FP3}
Let $S$ be a finitely presented regular monoid with finitely many left and right ideals. If every maximal subgroup of $S$ satisfies the homological finiteness condition ${\rm FP}\sb 3$, then so does $S$.
\end{cor}
\begin{proof}
In \cite{Cremanns5} (see also \cite{Pride2} and \cite{Kilibarda1}) it was shown that if a monoid $S$ has finite derivation type then it also satisfies the homological finiteness condition ${\rm FP}_3$. Moreover, in \cite{Cremanns1} it was shown that for 
finitely presented groups having finite derivation type is equivalent to being of type ${\rm FP}_3$. 
Since $S$ is finitely presented with finitely many left and right ideals it follows from \cite{Ruskuc2} that all maximal subgroups of $S$ are finitely presented. 
Thus if every maximal subgroup of $S$ is of type ${\rm FP}_3$ then they all have finite derivation type, which by Theorem~\ref{main_0} implies that $S$ has finite derivation type, which in turn implies that $S$ is of type ${\rm FP}_3$. \end{proof}

These results answer several open problems posed in \cite[Open Problem~4.5]{Ruskuc2}. 
The converse of Corollary~\ref{FP3} does not hold; see \cite{Gray2}.

The paper is structured as follows. In Section~2 we give the preliminaries required for the rest of the paper, in particular we define what it means for a monoid to have finite derivation type. Then in Section~3 we prove our first main result which gives a homotopy base for an arbitrary subgroup of a monoid, and we prove Theorem~\ref{bigtosmall}. Finite derivation type for regular monoids and their maximal subgroups is the topic of Section~4, and this is the place where our second main result, Theorem~\ref{main_0}, is proved. 

\section{Preliminaries: homotopy relations and the homotopical finiteness property finite derivation type}

The background and results stated in Section~1 above have all been given in terms of monoids. In fact, all of the concepts that we intend to work with in the paper may be defined for semigroups and we shall find it convenient throughout to work in this slightly more general context. 

\subsection*{Presentations and rewriting systems}

Let $A$ be a non-empty set, that we call an \emph{alphabet}. We use $A^*$ to denote the free monoid over $A$, and $A^+$ to denote the free semigroup. For words $u,v \in A^*$ we write $u \equiv v$ to mean that $u$ and $v$ are identically equal as words. We write $|w|$ to denote the total number of letters in $w$, which we call the \emph{length} of the word $w$. A \emph{rewriting system} over $A$ is a subset $R \subseteq A^+ \times A^+$. An element of $R$ is called a \emph{rule}, and we often write $r_{+1}=r_{-1}$ for $(r_{+1}, r_{-1}) \in R$. For $u, v \in A^+$ we write $u \rightarrow_R v$ if $u \equiv w_1 r_{+1} w_2$, and $v \equiv w_1 r_{-1} w_2$ where $(r_{+1},r_{-1}) \in R$ and $w_1, w_2 \in A^*$. The reflexive symmetric transitive closure $\leftrightarrow_R^{*}$ of $\rightarrow_R$ is precisely the congruence on $A^+$ generated by $R$. The ordered pair $\langle A | R \rangle$ is called a \emph{semigroup presentation} with \emph{generators} $A$ and set of \emph{defining relations} $R$. If $S$ is a semigroup that is isomorphic to $A^+  / \leftrightarrow_R^{*}$ we say that $S$ is the \emph{semigroup defined by the presentation $\lb A | R \rb$}. A semigroup is said to be \emph{finitely presented} if it may be defined by a presentation with finitely many generators and a finite number of defining relations. For a word $w \in A^+$ we use $[w]_{\leftrightarrow_R^{*}}$ to denote the $\leftrightarrow_R^{*}$-class of $w$. So $[w]_{\leftrightarrow_R^{*}}$ is the element of $S = A^+ / \leftrightarrow_R^{*}$ that the word $w$ represents. For any subset $T$ of the semigroup $S$ we use $\lang{A,T}$ to denote the set of all words in $A^+$ representing elements of $T$, i.e.
$$
\lang{A,T} = \{ w \in A^+ : [w]_{\leftrightarrow_R^{*}}\; \in T \}.
$$

\subsection*{Derivation graphs, homotopy bases, and finite derivation type}

Let $\mathcal{P} = \lb A | R \rb$ be a semigroup presentation. The \emph{derivation graph} associated with $\gp$ is an infinite graph $\Gamma = \Gamma(\gp) = (V,E,\iota, \tau, ^{-1})$ with \emph{vertex set} $V = A^+$, and \emph{edge set $E$} consisting of the following collection of $4$-tuples:
\[
\{ (w_1, r, \epsilon, w_2): \ w_1, w_2 \in A^*, r=(r_{+1},r_{-1}) \in R, \ \mbox{and} \ \epsilon \in \{ +1, -1 \} \}.
\]
The functions $\iota, \tau : E \rightarrow V$ associate with each edge $\bbe = (w_1, r, \epsilon, w_2)$ (with $r=(r_{+1},r_{-1}) \in R$) its initial and terminal vertices $\iota \bbe = w_1 r_{\epsilon} w_2$ and $\tau \bbe = w_1 r_{- \epsilon} w_2$, respectively. The mapping $^{-1} : E \rightarrow E$ associates with each edge $\bbe = (w_1, r, \epsilon, w_2)$ an inverse edge $\bbe^{-1} = (w_1, r, -\epsilon, w_2)$.

A path is a sequence of edges $\bbp = \bbe_1 \circ \bbe_2 \circ \ldots \circ \bbe_n$ where $\tau \bbe_i  \equiv \iota \bbe_{i+1}$ for $i=1, \ldots, n-1$. Here $\bbp$ is a path from $\iota \bbe_1$ to $\tau \bbe_n$ and we extend the mappings $\iota$ and $\tau$ to paths by defining $\iota \bbp \equiv \iota \bbe_1$ and $\tau \bbp \equiv \tau \bbe_n$.  The inverse of a path $\bbp = \bbe_1 \circ \bbe_2 \circ \ldots \circ \bbe_n$ is the path $\bbp^{-1} = \bbe_n^{-1} \circ \bbe_{n-1}^{-1} \circ \ldots \circ \bbe_1^{-1}$, which is a path from $\tau \bbp$ to $\iota \bbp$. A \emph{closed path} is a path $\bbp$ satisfying $\iota \bbp \equiv \tau \bbp$. For two paths $\bbp$ and $\bbq$ with $\tau \bbp \equiv \iota \bbq$ the composition $\bbp \circ \bbq$ is defined. To unclutter notation slightly we shall usually omit the  symbol $\circ$ when composing paths, writing simply $\bbp \bbq$ in place of $\bbp \circ \bbq$.

We denote the set of paths in $\Gamma$ by $P(\Gamma)$, where for each vertex $w \in V$ we include a path $1_w$ with no edges, called the \emph{empty path} at $w$. We call a path $\bbp$ \emph{positive} if it is either empty or it contains only edges of the form $(w_1, r, +1, w_2)$. We use $P_+(\Gamma)$ to denote the set of all positive paths in $\Gamma$. Dually we have the notion of $\emph{negative}$ path, and $P_-(\Gamma)$ denotes the set of all negative paths. The free monoid $A^*$ acts on both sides of the set of edges $E$ of $\Gamma$ by
\[
x \cdot \bbe \cdot y = (x w_1, r, \epsilon, w_2 y)
\]
where $\bbe = (w_1, r, \epsilon, w_2)$ and $x, y \in A^*$. This extends naturally to a two-sided action of $A^*$ on $P(\Gamma)$ where for a path $\bbp = \bbe_1 \circ \bbe_2 \circ \ldots \circ \bbe_n$ we define
\[
x \cdot \bbp \cdot y = (x \cdot \bbe_1 \cdot y) \circ (x \cdot \bbe_2 \cdot y)
\circ \ldots \circ (x \cdot \bbe_n \cdot y).
\]
If $\bbp$ and $\bbq$ are paths such that $\iota \bbp \equiv \iota \bbq$ and $\tau \bbp \equiv \tau \bbq$ then we say that $\bbp$ and $\bbq$ are \emph{parallel}, and write $\bbp \parallel \bbq$. We use $\parallel\; \subseteq P(\Gamma) \times P(\Gamma)$ to denote the set of all parallel paths.

An equivalence relation $\sim$ on $P(\Gamma)$ is called a \emph{homotopy relation} if it is contained in $\parallel$ and satisfies the following conditions:

\begin{enumerate}[(H1)]

\item If $\bbe_1$ and $\bbe_2$ are edges of $\Gamma$, then
\[
(\bbe_1 \cdot \iota \bbe_2) (\tau \bbe_1 \cdot \bbe_2) \sim
(\iota \bbe_1 \cdot \bbe_2) (\bbe_1 \cdot \tau \bbe_2 ).
\]

\item For any $\bbp, \bbq \in P(\Gamma)$ and $x,y \in A^*$
\[
\bbp \sim \bbq \ \ \mbox{implies} \ \  x \cdot \bbp \cdot y \sim x \cdot \bbq \cdot y.
\]

\item For any $\bbp, \bbq, \bbr, \bbs \in P(\Gamma)$ with $\tau \bbr \equiv \iota \bbp \equiv \iota \bbq$ and $\iota \bbs \equiv \tau \bbp \equiv \tau \bbq$
\[
\bbp \sim \bbq \ \ \mbox{implies} \ \ \bbr\;\bbp\;\bbs \sim \bbr\;\bbq\;\bbs.
\]

\item If $\bbp \in P(\Gamma)$ then $\bbp \bbp^{-1} \sim 1_{\iota \bbp}$, where $1_{\iota \bbp}$ denotes the empty path at the vertex $\iota \bbp$.

\end{enumerate}

It is a straightforward exercise to check that the collection of all homotopy relations is closed under arbitrary intersection, and that $\parallel$ itself is a homotopy relation. Therefore, for any subset $C$ of $\parallel$ there is a unique smallest homotopy relation $\sim_C$ on $P(\Gamma)$ containing $C$. We call this the \emph{homotopy relation generated by $C$}. A subset $C$ of $\parallel$ that generates $\parallel$ is called a \emph{homotopy base for $\Gamma$}.

\begin{defn}
We say that the presentation $\gp = \lb A | R \rb$ has \emph{finite derivation type} (written $\mathrm{FDT}$ for short) if there is a finite homotopy base for $\Gamma = \Gamma(\gp)$. A finitely presented semigroup $S$ has finite derivation type if some (and hence any by \cite[Theorem~4.3]{Squier1} and \cite[Theorem~3]{Malheiro1})  finite presentation for $S$ has finite derivation type.
\end{defn}

Clearly, a set $B$ of parallel paths is a homotopy base if and only if the set $\{(\bbp \circ \bbq^{-1}, 1_{\iota \bbp}) : (\bbp, \bbq) \in B  \}$ is. Hence we say that a set $C$ of closed paths is a homotopy base if $\{ (\bbp, 1_{\iota \bbp}) : \bbp \in C\}$ is a homotopy base. So a homotopy base $X$ for $\Gamma = \Gamma(\mathcal{P})$ may be given either as a subset of $\parallel$, so that $X$ is a set of parallel paths, or may be given as a set of closed paths. Both of these ways of expressing homotopy bases will be used in this article. In addition, sometimes we shall refer to $X$ as a homotopy base of a presentation $\mathcal{P}$ (rather than of the graph $\Gamma(\mathcal{P})$) and also, when it is clear from context, as a homotopy base of the semigroup.    

The definition of $\mathrm{FDT}$ given above was first introduced for monoids by Squier in \cite{Squier1}. An equivalent geometric definition of $\mathrm{FDT}$, stated in terms of fundamental groups of $2$-complexes, may be found in \cite{Pride2}. Here, following \cite{Malheiro1}, we have opten to work with semigroups, semigroup presentations and the corresponding definition of $\mathrm{FDT}$. If one chose instead to work with monoid presentations, then the definitions given above would all have to be modified by replacing every occurrence of $A^+$ by $A^*$. Fortunately, when considering the properties of being finitely generated, presented or having $\mathrm{FDT}$ for monoids, it does not matter whether one chooses to work with semigroup presentations (and the corresponding definition of $\mathrm{FDT}$) or with monoid presentations (and corresponding definition of $\mathrm{FDT}$).   
Indeed, in \cite[Theorem~4.3]{Squier1} it was shown that for monoids the property of having $\mathrm{FDT}$ is independent of the choice of finite presentation for the monoid. The semigroup presentation analogue of this result was given in \cite[Theorem~3]{Malheiro1}. It is a very easy exercise to check that for a monoid, being finitely presented as a monoid is equivalent to being finitely presented as a semigroup. The analogue of this result for $\mathrm{FDT}$ is stated below; the proof is standard and so is omitted, full details may be found in \cite[Section~2.4]{MalheiroThesis}.

\begin{lem}
Let $M$ be a finitely presented monoid. Let $\mathcal{P}$ be a finite semigroup presentation defining $M$, and let $\mathcal{Q}$ be a finite monoid presentation defining $M$. Then $\mathcal{P}$ has finite derivation type if and only if $\mathcal{Q}$ has finite derivation type.
\end{lem}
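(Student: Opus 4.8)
The plan is to reduce the statement to a single concretely matched pair of presentations, and then to spread the conclusion to arbitrary $\mathcal{P}$ and $\mathcal{Q}$ using the two independence results already quoted: by \cite[Theorem~4.3]{Squier1} the property of having $\mathrm{FDT}$ is independent of the chosen finite \emph{monoid} presentation of $M$, and by \cite[Theorem~3]{Malheiro1} it is independent of the chosen finite \emph{semigroup} presentation. Thus I fix one finite monoid presentation $\mathcal{Q}_0 = \lb B \mid S \rb$ of $M$ and build from it one finite semigroup presentation $\mathcal{P}_0$ of $M$ by adjoining a new generator $e$ to stand for the identity: the defining relations of $\mathcal{P}_0$ are the relations $S_e$ obtained from $S$ by replacing every empty word by $e$, together with $ex=x$ and $xe=x$ for all $x \in B \cup \{e\}$. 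Since $M$ is a monoid, $e \mapsto 1$ extends to an isomorphism, so $\mathcal{P}_0$ indeed defines $M$ as a semigroup and is finite. It then suffices to prove that $\mathcal{Q}_0$ has $\mathrm{FDT}$ if and only if $\mathcal{P}_0$ does, for then any finite monoid presentation $\mathcal{Q}$ has $\mathrm{FDT}$ iff $\mathcal{Q}_0$ does (Squier) iff $\mathcal{P}_0$ does (the bridge) iff any finite semigroup presentation $\mathcal{P}$ does (Malheiro).

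For the direction ``$\mathcal{P}_0$ has $\mathrm{FDT}$ $\Rightarrow$ $\mathcal{Q}_0$ has $\mathrm{FDT}$'' I would use the projection $\pi \colon \Gamma(\mathcal{P}_0) \to \Gamma(\mathcal{Q}_0)$ induced by $e \mapsto 1$, which sends each $S_e$-edge to the corresponding $S$-edge and each edge coming from an $e$-relation to an empty path, and which respects (H1)--(H4). Conversely, each parallel pair $\bbp_0 \parallel \bbq_0$ in $\Gamma(\mathcal{Q}_0)$ admits a lift $\hh{\bbp_0} \parallel \hh{\bbq_0}$ in $\Gamma(\mathcal{P}_0)$ (replacing an $S$-edge coming from a relation with an empty side by the corresponding $S_e$-edge followed by the deletion of the $e$ it introduces), and $\pi$ carries this lift back to $\bbp_0$ and $\bbq_0$. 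Hence if $Y$ is a finite homotopy base for $\Gamma(\mathcal{P}_0)$, then the finite set $\pi(Y)$ is a homotopy base for $\Gamma(\mathcal{Q}_0)$: given $\bbp_0 \parallel \bbq_0$, lift, apply $Y$, and project.

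For the harder direction ``$\mathcal{Q}_0$ has $\mathrm{FDT}$ $\Rightarrow$ $\mathcal{P}_0$ has $\mathrm{FDT}$'' the difficulty is that the new generator $e$ creates genuinely new closed paths. I would record, for every word $w \in (B \cup \{e\})^+$, a distinguished positive normalisation path $\rho_w$ built from $e$-edges and running from $w$ to $\bb{w}$, where $\bb{w}$ is obtained by deleting every occurrence of $e$ (or $\bb{w} \equiv e$ if $w \in \{e\}^+$). Then I would take as candidate homotopy base for $\Gamma(\mathcal{P}_0)$ the lift of a finite homotopy base $X$ of $\Gamma(\mathcal{Q}_0)$ together with a fixed finite set $D$ of elementary closed paths: finitely many localised (H1)-type squares expressing that two adjacent $e$'s may be deleted in either order and that an $e$-edge commutes with an $S_e$-edge supported on a disjoint factor, closed under the two-sided action by (H2). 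The core claim is a normalisation lemma: modulo $D$, every closed path $\bbp$ of $\Gamma(\mathcal{P}_0)$ is $\sim$-equivalent to $\rho_{\iota \bbp}\,\bbp'\,\rho_{\iota \bbp}^{-1}$ for some closed path $\bbp'$ on $e$-free words, which is then a lift of a closed path of $\Gamma(\mathcal{Q}_0)$ to which the lifted copy of $X$ applies.

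The main obstacle is exactly this normalisation lemma. Establishing it amounts to checking that the finite set $D$ suffices to re-route an arbitrary closed path to an $e$-free one up to homotopy --- in effect, that the rewriting system of $e$-relations is homotopically confluent with a finite homotopy base, and that this confluence interacts correctly with the lifted $S$-edges, so that sliding an $S_e$-edge past an $e$-deletion costs only paths already in $D$. This is a careful but routine induction on the number and position of the occurrences of $e$, using (H1)--(H4) to push each $e$ to normal form; once it is in place, the finiteness bookkeeping in both directions is immediate and the proof is complete.
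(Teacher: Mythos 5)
Your overall architecture is sound, and it is in fact the expected route: the paper does not prove this lemma at all (it declares the proof standard and cites \cite[Section~2.4]{MalheiroThesis}), and the intended argument has exactly the shape you describe --- fix a matched pair $\mathcal{Q}_0 = \lb B \mid S \rb$ and $\mathcal{P}_0 = \lb B, e \mid S_e,\ ex = x,\ xe = x\ (x \in B \cup \{e\}) \rb$, prove the bridge for that pair, and transport the conclusion to arbitrary finite presentations by \cite[Theorem~4.3]{Squier1} and \cite[Theorem~3]{Malheiro1}. Your direction ``$\mathcal{P}_0$ has $\mathrm{FDT}$ implies $\mathcal{Q}_0$ has $\mathrm{FDT}$'' (project by $e \mapsto 1$, lift parallel pairs, apply the base, project back) is correct as sketched.

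The gap is in the other direction, in the content of your finite set $D$. As described, $D$ consists of (a) squares deleting two adjacent $e$'s in either order and (b) squares commuting an $e$-edge with an $S_e$-edge \emph{supported on a disjoint factor}, and your normalisation lemma rests on the assertion that ``sliding an $S_e$-edge past an $e$-deletion costs only paths already in $D$''. That assertion fails. The squares (b) are already consequences of (H1) and (H2), so they contribute nothing new; and the slidings your induction actually meets are \emph{overlapping} applications, which (H1) never covers. The point is that an $e$ can only be deleted by a rule $ex = x$ or $xe = x$ that consumes an adjacent letter $x$: so if $(u,v) \in S$ with $u \equiv u_1 u'$, the two ways of getting from $eu$ to $v$ --- delete the $e$ first via the edge $(1, eu_1 = u_1, +1, u')$, or first apply $(e, u = v, +1, 1)$ and then delete the $e$ in $ev$ --- begin with edges whose supports share the letter $u_1$. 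This parallel pair is a critical-pair resolution, a genuinely new $2$-cell, not derivable from (H1)--(H4) together with your $D$. This is not a quibble: already for $B = \{b\}$ and $S = \emptyset$ your $D$ would have to be a homotopy base for $\lb b, e \mid eb=b,\ be=b,\ ee=e \rb$ on its own, and the Squier complex with only disjoint-commutation cells has highly non-trivial fundamental group coming precisely from such overlaps (for the single rule $ee = e$ one gets Thompson's group $F$, by Guba and Sapir \cite{Guba}); the overlapping pairs of edges out of $eeb$, $ebe$, $beb$ cannot be trivialised without adding cells for them. The repair is to put into $D$ the finitely many resolutions of \emph{all} overlaps among the rules $ex = x,\ xe = x\ (x \in B \cup \{e\})$ and the sides of $S_e$: the $e$/$e$ overlaps of shapes $eee$, $eeb$, $bee$, $ebe$, $beb$, and the $e$/$S_e$ overlaps of shapes $e u_{\pm 1}$ and $u_{\pm 1} e$ (including the degenerate ones where a side of the relation is $e$). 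This keeps $D$ finite, and is exactly analogous to the role of the set $X_3$ in the paper's proof of Theorem~\ref{comp0simpExt}, which is deliberately built from \emph{overlapping} pairs of positive edges. The same enlarged $D$ is also what absorbs the second issue you gloss over, namely that your lift of $X$ is not strictly compatible with (H2): the lift of $x \cdot \bbp \cdot y$ and $x \cdot (\text{lift of } \bbp) \cdot y$ differ by $e$-deletion conjugations. With these corrections your plan goes through; as written, the normalisation lemma is false for your $D$.
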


Therefore when considering $\mathrm{FDT}$ for monoids, it makes no difference whether we work with semigroup presentations or with monoid presentations. Throughout the paper we shall work only with semigroup presentations. 

\section{A homotopy base for a subgroup of a semigroup}

In this section we present our first main result: Given a homotopy base for a semigroup we find a homotopy base for an arbitrary subgroup of that semigroup. When the semigroup has finite derivation type, and the subgroup has finite translational index, the obtained homotopy base for the subgroup will be finite, thus proving Theorem~\ref{bigtosmall}.

Let $S$ be a semigroup and let $G$ be a subgroup of $S$. As mentioned in the introduction, the \emph{right cosets} of $G$ are the elements of the strong orbit of $G$ under the action of $S$ on the set $\{ Gs : s \in S \}$. So, $Gs$ is a coset if and only if there exists $s' \in S$ with $Gss'=G$. By taking $s = s' = e$ where $e$ is the identity of $G$, which is an idempotent of $S$, 
we see that $G$ is a right coset of $G$. We call the number of right cosets the (right) \emph{translational index} of $G$ in $S$. We say that $G$ has finite right translational index if $G$ has finitely many right cosets. There is also an obvious dual notion of left coset, and left translational index. Throughout we shall work with right cosets, and by the translational index we shall always mean the right translational index. This definition of index for a subgroup of a semigroup was first considered in \cite{Ruskuc2}. These ideas have a natural interpretation in terms of Green's relations; see the end of this section for more details on this.

Let $S$ be defined by the semigroup presentation $\mathcal{P} = \lb A | R \rb$. The first thing that we need is a presentation for the group $G$. A method for finding such a presentation, given in terms of $\mathcal{P}$ and the action of $S$ on the cosets of $G$, was the main result of \cite{Ruskuc2}. We now give details of that result, which will be stated as Theorem~\ref{TPres}, before going on to consider the problem of homotopy bases.

Let $\mathcal{C} = \{ C_i : i \in I \}$ be the collection of (right) cosets of $G$ in $S$. Now $S$ acts on the set $\mathcal{C} \cup C_0$ in a natural way by right multiplication where $C_i s = C_0$ if and only if $C_i s \not\in \mathcal{C}$. This is the same as the action of $S$ on the index set $I \cup \{ 0 \}$ given by $C_i s = C_{is}$. We extend this notation to $A^+$ writing $i w \in I \cup \{0 \}$ to mean the element of $I \cup \{0 \}$ obtained by acting on $i$ by the element of $S$ that the word $w \in A^+$ represents.

By convention we let $1 \in I$ and set $C_1 = G$. For all $i \in I \setminus \{ 1 \}$ let $r_i, r_i'$ be fixed words in $A^+$, representing the elements $s_{r_i}$ and $s_{r_i'}$ of $S$ respectively, and chosen so that we have $G s_{r_i}  = C_i$ and $g s_{r_i} s_{r_i'} = g$ for all $g \in G$. (To show that such elements exist is an easy exercise; see \cite[Proposition~2.4]{Ruskuc2}.) For notational convenience we set $r_1 = r_1' = 1 \in A^*$ and also define $i 1 = i$ for all $i \in I$ (where $1$ denotes the empty word). Also let $e \in A^+$ be a fixed word representing the identity of the group $G$. Next define a new alphabet
\[
B = \{ [i,a]: i \in I, a \in A, ia \neq 0 \},
\]
and define a mapping
\[
\phi: \{(i,w): i \in I, w \in A^*, iw \neq 0    \} \rightarrow B^*
\]
inductively by
\[
\begin{array}{lll}
\phi(i,1) = 1, & &  \\
\phi(i, aw) = [i,a] \phi(ia, w) & & (i \in I, \ a \in A, \ w \in A^*, \ iaw \neq 0).
\end{array}
\]
This easily extends to
\begin{equation}
\phi(i,w_1 w_2) \equiv \phi(i, w_1)\phi(i w_1, w_2) \quad (i \in I, \ w_1, w_2 \in A^*, \ iw_1 w_2 \neq 0). \label{identity}
\end{equation}
For $w \in A^*$ such that $1w \neq 0$, abusing notation slightly, we write $\phi(w) = \phi(1,w)$ and we note that for any two words $w_3, w_3 \in \lang{A,G}$ we have
\[
\phi(1,w_3w_4) \equiv \phi(1,w_3) \phi(1w_3, w_4) \equiv \phi(1,w_3) \phi(1,w_4)
\]
so that
\begin{equation} \label{homonG}
\phi(w_3 w_4) \equiv \phi(w_3) \phi(w_4) \quad (\mbox{for all} \ w_3, w_4 \in \lang{A,G}).
\end{equation}
Note that even though $\phi$ is defined for the pairs $(i,1)$ $(i \in I)$, and its image includes the empty word, we have $\phi(i,w) \in B^+$ if and only if $w \in A^+$.  

With this notation the main theorem of \cite{Ruskuc2} states:
\begin{thm}\cite[Theorem~2.9]{Ruskuc2} \label{TPres}
If $S$ is defined by a presentation $\lb A | R \rb$ then, with the above notation, the presentation with generators $B$ and set of relations $U$:
\[
\begin{array}{lll}
 & \phi(i,u) = \phi(i,v) & (i \in I, (u=v) \in R, iu \neq 0), \\
	& [i,a] = \phi(er_iar_{ia}')  & (i \in I, \ a \in A, \ ia \neq 0)
\end{array}
\]
defines $G$ as a semigroup.
\end{thm}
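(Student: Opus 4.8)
The plan is to verify that the map $\phi$ extends to a well-defined semigroup homomorphism from $G$ (presented by $\lb B \mid U \rb$) onto $S$'s subgroup, and then to exhibit a mutually inverse map, thereby establishing the isomorphism. I would organise the argument around two complementary homomorphisms.

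First I would define a concrete map $\psi \colon B^+ \to A^+$ on generators by $\psi([i,a]) \equiv e\, r_i\, a\, r_{ia}'$ and extend it multiplicatively. The immediate task is to check that $\psi$ respects the relations $U$, so that it induces a well-defined homomorphism from the semigroup $\lb B \mid U \rb$ into $S$. The second family of relations in $U$ is exactly the statement that $[i,a]$ and $\phi(e r_i a r_{ia}')$ are identified, so $\psi$ is forced to be consistent there by construction; the work is in the first family, $\phi(i,u) = \phi(i,v)$ for $(u=v) \in R$. Using the inductive definition of $\phi$ together with the extension identity~\eqref{identity}, I would show that $[\psi(\phi(i,w))]_{\leftrightarrow_R^*}$ recovers the element $[e r_i w]_{\leftrightarrow_R^*}$ of $S$ (up to the coset bookkeeping via the action on $I$); once this is established, the fact that $(u=v)\in R$ implies $u \leftrightarrow_R^* v$ gives that both sides map into the same element of $G$, so the relation is respected.

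Next, in the reverse direction, I would use $\phi$ itself (via $\phi(w)=\phi(1,w)$ on $\lang{A,G}$) to send words representing elements of $G$ to words over $B$. The key property here is the homomorphism identity~\eqref{homonG}, which guarantees that $\phi$ behaves multiplicatively on $\lang{A,G}$, so it descends to a map $G \to \lb B \mid U \rb$ provided it is compatible with the defining relations $R$ of $S$ restricted to $\lang{A,G}$. This compatibility is again exactly what the first family of relations in $U$ encodes, read in the opposite direction. I would then verify that the two maps are mutually inverse: composing $\phi$ after $\psi$ on a generator $[i,a]$ returns $\phi(e r_i a r_{ia}')$, which the second family of $U$ collapses back to $[i,a]$; composing in the other order uses the coset relations $G s_{r_i} s_{r_i'} = G$ and the choice of $e$ as an identity word for $G$ to cancel the inserted prefixes $e r_i$ and suffixes $r_{ia}'$ down to the original element of $G$.

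The main obstacle I anticipate is the careful bookkeeping in the first step: showing that $\psi$ (equivalently $\phi$) is well defined on the relations $\phi(i,u)=\phi(i,v)$. This requires tracking how the index $i$ is transported by the action of $S$ as one reads across a word, and confirming that the rewriting $u \to_R v$ never leaves the domain (i.e.\ that $iu \neq 0$ forces the intermediate indices to stay in $I$), so that $\phi$ is genuinely defined all along the path. The extension identity~\eqref{identity} is the essential tool for this, letting me factor $\phi(i, w_1 r_\epsilon w_2)$ through $\phi(i w_1, r_\epsilon)$ and reduce a single application of a rule of $R$ to the single relation $\phi(iw_1,u)=\phi(iw_1,v)$ of $U$. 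Everything else reduces to routine verification using the defining properties of $r_i$, $r_i'$, and $e$, which I would not grind through in detail here.
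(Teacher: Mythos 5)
Your statement is one the paper does not prove at all: Theorem~\ref{TPres} is imported from \cite[Theorem~2.9]{Ruskuc2}, and the paper only restates it before building its homotopy-base machinery on top of it. So the comparison is against Ru\v{s}kuc's proof, and your proposal is essentially that standard Reidemeister--Schreier-style argument: the two maps you introduce are exactly the $\phi$ and $\psi$ that the paper itself defines immediately after the statement, and in fact the paper's paths $\Lambda_w$ (from $w$ to $\phi(\psi(w))$, built by induction on $|w|$ using the second family of relations of $U$) and its set $W$ are precisely a homotopy-level lift of your verification that $\Phi \circ \Psi = \mathrm{id}$. Your packaging via two mutually inverse homomorphisms, with $\Phi\circ\Psi=\mathrm{id}$ checked only on generators, is a clean equivalent of the usual ``every relation is a consequence of $U$'' step.

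Three points in your sketch should be tightened when written out, though none is a fatal gap. First, the key computation must be stated exactly: by induction on $|w|$ using \eqref{identity}, the words $\psi(\phi(i,w))$ and $e\, r_i\, w\, r_{iw}'$ represent the same element of $S$; the trailing $r_{iw}'$ is what your phrase ``up to the coset bookkeeping'' conceals, and it is only in the special case $i=1$, $w \in \lang{A,G}$ (where $r_1'$ is empty and $ew=w$ in $S$) that one gets $\psi(\phi(w))=w$ in $S$, which is what makes $\Psi\circ\Phi=\mathrm{id}_G$ work. Second, the cancellation in that induction uses the \emph{pointwise} property $g s_{r_i} s_{r_i'} = g$ for all $g \in G$, not merely the setwise equality $G s_{r_i} s_{r_i'} = G$ that you quote; the setwise version is too weak to cancel $r_{ia}'\,e\,r_{ia}$ inside a product. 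Third, the domain issue you flag is settled by the observation that $i(w_1 u w_2) \neq 0$ implies $i w_1 \neq 0$ and $i w_1 u \neq 0$: by the strong-orbit definition of cosets, if $C_i w_1 u w_2$ can be translated back to $G$ then so can every set $C_i w_1 w$ with $w$ a prefix-completion, since one may translate by the remaining suffix and then back. This prefix fact is what licenses factoring each elementary $R$-transition through \eqref{identity} and matching it with a relation $\phi(iw_1,u)=\phi(iw_1,v)$ from the first family of $U$. With these filled in, your argument is complete and agrees in substance with the cited proof.
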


Note that when $A$, $R$ and $I$ are finite, Theorem~\ref{TPres} gives a finite presentation for $G$.

Given a homotopy base $X$ for a presentation $\mathcal{P} = \lb A | R \rb$ of $S$ we shall now show how to obtain a homotopy base $Y$ for the presentation $\mathcal{Q} = \lb B | U \rb$ of $G$ given in Theorem~\ref{TPres}. When $A$, $R$, $I$ and $X$ are all finite, our homotopy base $Y$ (along with $B$ and $Q$) will be finite.

 The derivation graph $\Gamma(\mathcal{P})$ has vertices $A^+$ while the graph $\Gamma(\mathcal{Q})$ has vertices $B^+$. So $\phi$ gives a mapping from the vertices of $\Gamma(\mathcal{P})$ to the vertices of $\Gamma(\mathcal{Q})$. Next we let $\psi$ be the homomorphism extending
\[
\psi: B^* \rightarrow A^*, \quad [i,a] \mapsto e r_i a r_{ia}'.
\]
Note that in particular the empty word is mapped to the empty word under $\psi$,  which we write as $\psi(1) = 1$. It follows from the definition that for $u,v \in B^+$, if $u$ and $v$ represent the same element of $G$ then $\psi(u)$ and $\psi(v)$ represent that same element of $S$. Now by restricting to $B^+$, $\psi$ gives a mapping from the vertices of $\Gamma(\mathcal{Q})$ to the vertices of $\Gamma(\mathcal{P})$.    

In order to define a homotopy base for $\lb B | U \rb$ we shall carry out the following steps.
\begin{enumerate}[(I)]
\item Extend $\phi$ and $\psi$ to mappings between paths of $\Gamma(\mathcal{P})$ and paths of $\Gamma(\mathcal{Q})$.
\item For each $w \in B^+$ define a path $\Lambda_w$ in $\Gamma(\mathcal{Q})$ from $w$ to $\phi(\psi(w)) \in B^+$.
\end{enumerate}
In fact, $\phi$ will actually be defined on certain pairs $(i, \bbp)$ where $i \in I$ and $\bbp$ is a path from $\Gamma(\mathcal{P})$. Once these definitions have been made, we shall use them in Theorem~\ref{homotopybase} below to define a homotopy base for the presentation $\mathcal{Q}$ of $G$.

\subsection*{\boldmath Extending the definitions of $\phi$ and $\psi$ to mappings between paths} 

Now we shall extend $\phi$ to a mapping, also denoted $\phi$, with
\[
\phi: \{ (i,\bbp): i \in I, \ \bbp \in P(\Gamma(\mathcal{P})), \ i (\iota \bbp) \neq 0  \}
\rightarrow P(\Gamma(\mathcal{Q}))
\]
where for an edge $\bbe = (w_1, r, \epsilon, w_2)$ in $\Gamma(\mathcal{P})$ such that $i (\iota \bbe) \neq 0$ we define
\[
\phi(i, \mathbb{E}) =
(  \phi(i,w_1), \phi(i w_1, r_{+1}) = \phi(i w_1, r_{-1}), \epsilon, \phi(i w_1 r_{+1}, w_2)  )
\]
which is an edge of $\Gamma(\mathcal{Q})$, and then for a path $\bbp = \bbe_1 \bbe_2 \ldots \bbe_k$ in $\Gamma(\mathcal{P})$ we define
\[
\phi(i, \bbp) = \phi(i, \bbe_1) \phi(i, \bbe_2) \ldots \phi(i, \bbe_k).
\]
Also, for any word $w \in A^+$ and $i \in I$ such that $iw \neq 0$ we define $\phi(i,1_w) = 1_{\phi(i,w)}$. From the definition, and using \eqref{identity}, it is easy to see that $\iota \phi(i, \bbp) \equiv \phi(i, \iota \bbp)$, $\tau \phi(i, \bbp) \equiv \phi(i, \tau \bbp)$ and $\phi(i,\bbp)^{-1} = \phi(i,\bbp^{-1})$. As above we write $\phi(\bbp)$ as shorthand for $\phi(1,\bbp)$, for any paths $\bbp$ such that $1(\iota \bbp) \neq 0$. 

Immediately from this definition, combining with \eqref{identity} we obtain the identity
\begin{equation} \label{pathidentity}
\phi(i, w_1 \cdot \mathbb{P} \cdot w_2) =
\phi(i, w_1) \cdot \phi(i w_1, \mathbb{P}) \cdot \phi(i w_1 \iota \mathbb{P}, w_2)
\end{equation}
for any path $\mathbb{P}$ of $\Gamma(\mathcal{P})$ and $w_1, w_2 \in A^*$, $i \in I$ with $i w_1 \iota \mathbb{P} w_2 \neq 0$.

Next we extend $\psi: B^* \rightarrow A^*$ to a mapping, also denoted $\psi$, where
\[
\psi : P(\Gamma(\mathcal{Q})) \rightarrow P(\Gamma(\mathcal{P})).
\]
To do this, firstly for any word $w \in B^+$ we define $\psi(1_w) = 1_{\psi(w)}$. 
Next, for each $u \in U$ let $\bbe_u = (1,u,+1,1)$, which is an edge in $\Gamma(\mathcal{Q})$. Then for every $u \in U$ let $\psi(\bbe_u)$ be a fixed path in $\Gamma(\mathcal{P})$ from $\psi(u_{+1})$ to $\psi(u_{-1})$. Such a path exists since $\psi(u_{+1})$ and $\psi(u_{-1})$ represent the same element of $S$. Then for an arbitrary edge $\bbe = (w_1,u,\epsilon,w_2)$ of $\Gamma(\mathcal{Q})$ we define
\[
\psi(\bbe) = \psi(w_1) \cdot \psi(\bbe_u)^{\epsilon} \cdot \psi(w_2)
\]
which is a path from $\psi(\iota \bbe)$ to $\psi( \tau \bbe)$ since by definition $\psi : B^* \rightarrow A^*$ is a homomorphism. Then for any path $\bbp = \bbe_1 \bbe_2 \ldots \bbe_k$ in $\Gamma(\mathcal{Q})$ we define
\[
\psi(\bbp) = \psi(\bbe_1) \psi(\bbe_2) \ldots \psi(\bbe_k).
\]

\subsection*{\boldmath Definition of the paths $\Lambda_w$} For each $w \in B^+$ we shall define a path $\Lambda_w$ in $\Gamma(\mathcal{Q})$ from $w \in B^+$ to $\phi ( \psi (w) )$. We define 
$\Lambda_w$ by induction on the length of the word $w$. If $|w|=1$ then we have $w \equiv [i,a]$ for some $i \in I$, $a \in A$ such that $ia \neq 0$, and we set
\[
\Lambda_w = (1, [i,a] = \phi(er_i a r_{ia}'), +1, 1)
\]
which is an edge of $\Gamma(\mathcal{Q})$. If $|w|>1$ then write $w \equiv bw'$ where $b \in B$ and $w' \in B^+$, say $b = [i,a] \in B$. Then we define inductively
\[
\Lambda_w = (\Lambda_b \cdot w')(\phi(er_i a r_{ia}') \cdot \Lambda_{w'}).
\]
\begin{lem}
For all $w \in B^+$, $\Lambda_w$ is a path in $\Gamma(\mathcal{Q})$ with initial vertex $w$ and terminal vertex $\phi (\psi(w))$.
\end{lem}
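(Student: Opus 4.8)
The statement asserts two facts about $\Lambda_w$: that it is a well-defined path in $\Gamma(\mathcal{Q})$, and that its endpoints are $\iota\Lambda_w \equiv w$ and $\tau\Lambda_w \equiv \phi(\psi(w))$. The natural approach is induction on $|w|$, exactly mirroring the inductive definition of $\Lambda_w$ itself. The base case $|w|=1$ should be checked by direct computation, and the inductive step by unwinding the recursive formula and invoking the induction hypothesis together with the identities already established in the excerpt.

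First I would treat the base case. Here $w \equiv [i,a]$ with $ia \neq 0$, and $\Lambda_w$ is the single edge $(1, [i,a] = \phi(er_i a r_{ia}'), +1, 1)$. I must confirm this really is an edge of $\Gamma(\mathcal{Q})$, i.e.\ that $[i,a] = \phi(er_i a r_{ia}')$ is a genuine relation in $U$; this is precisely the second family of relations in Theorem~\ref{TPres}. Its initial vertex is $[i,a] \equiv w$ and its terminal vertex is $\phi(er_i a r_{ia}')$. To finish the base case I need to verify $\phi(er_i a r_{ia}') \equiv \phi(\psi([i,a]))$, which follows immediately since $\psi([i,a]) = er_i a r_{ia}'$ by definition of $\psi$, and then applying $\phi = \phi(1,-)$.

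For the inductive step, write $w \equiv bw'$ with $b = [i,a]$, so $\Lambda_w = (\Lambda_b \cdot w')(\phi(er_i a r_{ia}') \cdot \Lambda_{w'})$. The first factor is the base-case edge $\Lambda_b$ with $w'$ appended on the right (using the right action of $B^*$ on paths), so it runs from $bw' \equiv w$ to $\phi(er_i a r_{ia}') \cdot w'$. The second factor applies the induction hypothesis to $w'$: $\Lambda_{w'}$ runs from $w'$ to $\phi(\psi(w'))$, so prepending $\phi(er_i a r_{ia}')$ gives a path from $\phi(er_i a r_{ia}')\cdot w'$ to $\phi(er_i a r_{ia}') \cdot \phi(\psi(w'))$. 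The key point to verify is composability: the terminal vertex of the first factor, $\phi(er_i a r_{ia}')\cdot w'$, must equal the initial vertex of the second, which it does. Finally I must identify the overall terminal vertex $\phi(er_i a r_{ia}') \cdot \phi(\psi(w'))$ with $\phi(\psi(w))$. Since $\psi$ is a homomorphism, $\psi(w) = \psi(b)\psi(w') = er_i a r_{ia}' \cdot \psi(w')$, and then the multiplicativity of $\phi$ recorded in \eqref{identity} gives $\phi(\psi(w)) \equiv \phi(er_i a r_{ia}')\phi(\psi(w'))$ — here one uses that $er_i a r_{ia}'$ represents an element fixing the index $1$, so that $\phi(1, er_i a r_{ia}' \cdot \psi(w')) \equiv \phi(1,er_i a r_{ia}')\phi(1,\psi(w'))$.

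The only mild obstacle I anticipate is bookkeeping with the action-on-paths notation and confirming that the concatenation points line up as words in $B^+$; the algebraic content is entirely carried by \eqref{identity} and the definitions of $\phi$ and $\psi$, so no genuinely hard step arises. I would present the argument as a clean induction, checking composability and both endpoints at each stage.
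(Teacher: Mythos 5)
Your proposal is correct and follows essentially the same route as the paper: induction on $|w|$, unwinding the recursive definition of $\Lambda_w$, checking composability of the two factors, and identifying the terminal vertex via the homomorphism property of $\psi$ together with multiplicativity of $\phi$ on words representing elements of $G$. The only cosmetic difference is that you re-derive this last multiplicativity fact from \eqref{identity} (using that words in $\lang{A,G}$ fix the index $1$), whereas the paper simply cites it as the already-recorded identity \eqref{homonG}.
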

\begin{proof}
The result holds trivially for words of length $1$. Now let $w \in B^+$ with $|w| \geq 2$ and assume inductively that the result holds for all words of strictly smaller length than $w$. Write $w \equiv bw'$ where $b = [i,a] \in B$. By definition \[
\Lambda_w = (\Lambda_b \cdot w')(\phi(er_i a r_{ia}') \cdot \Lambda_{w'})
= (\Lambda_b \cdot w')(\phi(\psi(b)) \cdot \Lambda_{w'}).
\]
By induction $\Lambda_w'$ is a path in $\Gamma(\mathcal{Q})$ from $w'$ to $\phi (\psi (w'))$. Now $\Lambda_b \cdot w'$ has initial vertex $bw'$ and terminal vertex $\phi(\psi(b))w'$, while $\phi(\psi(b)) \cdot \Lambda_{w'}$ has initial vertex $\phi(\psi(b))w'$ and terminal vertex $\phi(\psi(b)) \phi (\psi (w'))$. Since $\psi(b)$ and $\psi(w')$ both belong to $\lang{A,G}$ it follows from \eqref{homonG} and the fact $\psi$ is a homomorphism that
\[
\phi(\psi(b)) \phi(\psi(w')) \equiv \phi(\psi(b) \psi (w')) \equiv \phi(\psi(w)),
\]
so $\Lambda_w$ is a path from $bw' \equiv w$ to $\phi (\psi (w))$. \end{proof}

We are now in a position to define the homotopy base for $G$.

\begin{thm} \label{homotopybase}
Let  $X$ be a homotopy base for a presentation $\lb A | R \rb$ defining a semigroup $S$, where $X$ is a set of closed paths. Then, with the above notation, the set of closed paths $K \cup W$ is a homotopy base for the presentation $\lb B | U \rb$ of the group $G$ where
\[
K = \{ \phi(j, \mathbb{P}) : \mathbb{P} \in {X}, \ j \in I, \ j (\iota \mathbb{P}) \neq 0 \}
\]
and
\[
W = \{ \ \bbe_r \Lambda_{r_{-1}}  \phi(\psi(\bbe_r))^{-1}  \Lambda_{r_{+1}}^{-1} \ : r \in U, \ i \in I \},
\]
with $\bbe_r = (1,r,+1,1)$. 
\end{thm}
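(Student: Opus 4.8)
The plan is to show that the homotopy relation $\sim_{K\cup W}$ generated by $K\cup W$ coincides with $\parallel$ on $P(\Gamma(\gq))$. Since $\parallel$ is itself a homotopy relation and $\sim_{K\cup W}\subseteq\;\parallel$ automatically, it suffices to prove that any two parallel paths $\bbp\parallel\bbq$ in $\Gamma(\gq)$ satisfy $\bbp\sim_{K\cup W}\bbq$. The whole argument is organised around the two transfer maps $\phi,\psi$ and the connecting paths $\Lambda_w$, and splits into a ``$\phi$-transport'' lemma powered by $K$ and a ``collapsing'' lemma powered by $W$.

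First I would prove the \emph{transport lemma}: for every $i\in I$ and every pair of parallel paths $\mathbb{P}\parallel\mathbb{Q}$ in $\Gamma(\gp)$ with $i(\iota\mathbb{P})\neq 0$, one has $\phi(i,\mathbb{P})\sim_K\phi(i,\mathbb{Q})$. To do this, declare $\mathbb{P}\approx\mathbb{Q}$ to hold iff $\mathbb{P}\parallel\mathbb{Q}$ and $\phi(i,\mathbb{P})\sim_K\phi(i,\mathbb{Q})$ for all admissible $i$, and check that $\approx$ is a homotopy relation on $P(\Gamma(\gp))$ containing $X$. Containment of $X$ is immediate, since for $\mathbb{P}\in X$ each $\phi(i,\mathbb{P})$ is by definition an element of $K$ and hence $\sim_K$-trivial. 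Because $\sim_K$ is already a homotopy relation, the axioms (H1), (H3), (H4) for $\approx$ reduce to checking that $\phi(i,-)$ sends each instance of the corresponding axiom in $\Gamma(\gp)$ to an instance of the same axiom in $\Gamma(\gq)$; this holds since $\phi(i,-)$ respects composition, satisfies $\phi(i,\mathbb{P})^{-1}=\phi(i,\mathbb{P}^{-1})$, and carries interchangeable edges to interchangeable edges. The essential point is (H2): expanding $\phi(i,x\cdot\mathbb{P}\cdot y)$ by the path identity \eqref{pathidentity} isolates the inner factor $\phi(ix,\mathbb{P})$, whose index $ix$ runs over all of $I$ as $x$ varies. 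This is exactly why $K$ must contain $\phi(j,\mathbb{P})$ for \emph{every} $j\in I$, not merely for $j=1$. Since $X$ is a homotopy base we have $\sim_X\;=\;\parallel$, so $\approx\;=\;\parallel$, giving the lemma.

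Next I would prove the \emph{collapsing lemma}: for every path $\bbp$ in $\Gamma(\gq)$,
\[
\bbp\sim_{K\cup W}\Lambda_{\iota\bbp}\;\phi(\psi(\bbp))\;\Lambda_{\tau\bbp}^{-1}.
\]
This goes by induction on the number of edges: the inductive step composes single-edge contributions using (H3) and cancels the internal factors $\Lambda_{\tau\bbe_k}^{-1}\Lambda_{\iota\bbe_{k+1}}$ by (H4), since consecutive edges share a vertex. The base case is a single edge $\bbe=w_1\cdot\bbe_r^{\epsilon}\cdot w_2$: the relation $\bbe_r\sim_{K\cup W}\Lambda_{r_{+1}}\phi(\psi(\bbe_r))\Lambda_{r_{-1}}^{-1}$ read off from $W$ is pushed into the context $w_1,w_2$ by (H2), after which one must reconcile the outcome with $\Lambda_{\iota\bbe}\,\phi(\psi(\bbe))\,\Lambda_{\tau\bbe}^{-1}$. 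Here I would use the multiplicativity identity $\Lambda_{w_1w_2}\equiv(\Lambda_{w_1}\cdot w_2)(\phi(\psi(w_1))\cdot\Lambda_{w_2})$, which follows by an easy induction from the definition of $\Lambda$ together with \eqref{homonG}, combined with the definition of $\psi$ on edges and the path identity \eqref{pathidentity}.

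With both lemmas in hand the theorem follows. Given parallel $\bbp\parallel\bbq$ in $\Gamma(\gq)$, applying $\psi$ yields parallel paths $\psi(\bbp)\parallel\psi(\bbq)$ in $\Gamma(\gp)$ (both run from $\psi(\iota\bbp)$ to $\psi(\tau\bbp)$, words representing an element of $G$, so that the index $1$ is admissible along the whole path). The transport lemma gives $\phi(\psi(\bbp))\sim_K\phi(\psi(\bbq))$, and the collapsing lemma, using $\iota\bbp\equiv\iota\bbq$ and $\tau\bbp\equiv\tau\bbq$, upgrades this to $\bbp\sim_{K\cup W}\bbq$. I expect the main obstacle to be the base case of the collapsing lemma: matching the prefix $w_1$ produced by the (H2)-expansion against the prefix $\phi(\psi(w_1))$ occurring inside $\Lambda_{\iota\bbe}$ and $\phi(\psi(\bbe))$ forces the index-shift of $\phi$ to be tracked carefully, and it is precisely the interplay of $\Lambda$-multiplicativity with \eqref{pathidentity} (and a further appeal to the transport lemma) that absorbs this discrepancy.
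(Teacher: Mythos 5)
Your proposal is correct and takes essentially the same route as the paper's own proof: your transport lemma (powered by $K$) is precisely the paper's treatment of the families \eqref{z2} and \eqref{z3} (Lemmas~\ref{type2} and \ref{type3}) combined with the homotopy-base property of $X$, and your collapsing lemma (powered by $W$) is the paper's family \eqref{z1}, established from exactly the same ingredients you name --- the $\Lambda$-multiplicativity of Lemma~\ref{split}, the $\psi$-edge identity of Lemma~\ref{triangle}, and the $W$-generators as in Lemma~\ref{type1}. The only differences are organizational: the paper routes through an explicit infinite intermediate homotopy base $Z$ and verifies the $\phi$-transport by following a finite sequence of (H1)--(H4) applications (citing Kobayashi), whereas you skip naming $Z$, work with arbitrary parallel paths rather than closed ones, and verify transport by checking that the pulled-back relation $\approx$ is a homotopy relation containing $X$ --- the same content in universal-property form, including your correct identification of why $K$ must contain $\phi(j,\mathbb{P})$ for every $j\in I$.
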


Note that once proved, this result has Theorem~\ref{bigtosmall} as a corollary, since if $X$, $A$, $R$ and $I$ are all finite then $B$, $U$ and $K \cup W$ will be finite.

\

\noindent \textbf{Proving Theorem~\ref{homotopybase}.} The rest of this section will be devoted to proving Theorem~\ref{homotopybase}.
We begin by writing an infinite homotopy base $Z$ for $\Gamma(\mathcal{Q})$ and then show that $K \cup W$ generates all the closed paths of $Z$ and hence is itself a homotopy base. Let $Z$ denote the following infinite set of closed paths of the graph $\Gamma(\mathcal{Q})$
\begin{align}
\mathbb{E} \ \Lambda_{\tau \mathbb{E}} \ (\phi ( \psi (\mathbb{E})))^{-1} \ {\Lambda^{-1}_{\iota \mathbb{E}}}, \ \mbox{for} \ \mathbb{E} \ \mbox{in} \ \Gamma(\gq) & & \ \  \label{z1} \\
\phi([\mathbb{E}_1,\mathbb{E}_2]), \ \mbox{for} \  \mathbb{E}_1, \mathbb{E}_2 \ \mbox{in} \  \Gamma(\gp) \ \mbox{such that} \  \iota \mathbb{E}_1 \iota \mathbb{E}_2 \in \lang{A,G}, & &  \  \ \label{z2} \\
 \quad \mbox{where $[\mathbb{E}_1, \mathbb{E}_2]$ is the path $(\mathbb{E}_1 \cdot \iota \mathbb{E}_2)
(\tau \mathbb{E}_1 \cdot \mathbb{E}_2)
(\mathbb{E}_1 \cdot \tau \mathbb{E}_2)^{-1}
(\iota \mathbb{E}_1 \cdot \mathbb{E}_2)^{-1}$} & & \notag \\
\phi(w_1 \cdot \mathbb{P} \cdot w_2), \ \mbox{for} \  \mathbb{P} \in {X} \ \mbox{and} \  w_1, w_2 \in A^* \ \mbox{such that} \ w_1 (\iota \mathbb{P}) w_2 \in \lang{A,G}. & & \  \ \label{z3}
\end{align}
\begin{lem}
The set $Z$ is a homotopy base for $\Gamma(\mathcal{Q})$.
\end{lem}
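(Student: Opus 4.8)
The plan is to prove that the homotopy relation $\sim_Z$ generated by $Z$ coincides with $\parallel$. Since $\sim_Z\subseteq\parallel$ automatically, it suffices to show that every closed path of $\Gamma(\gq)$ is $\sim_Z$-equivalent to an empty path. The three families play complementary roles: \eqref{z1} lets us ``retract'' an arbitrary path onto the image of $\phi\circ\psi$, while \eqref{z2} and \eqref{z3} are precisely what is needed to transport, through $\phi$, the elementary homotopies already available in $\Gamma(\gp)$.

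First I would establish the retraction: for every path $\bbp$ in $\Gamma(\gq)$,
\[
\bbp \;\sim_Z\; \Lambda_{\iota \bbp}\, \phi(\psi(\bbp))\, \Lambda_{\tau \bbp}^{-1}.
\]
For a single edge $\bbe$ this is a rearrangement of the closed path \eqref{z1} using axiom (H4); the general case follows by induction on the number of edges, using that $\phi(1,-)$ and $\psi$ are multiplicative over composition (so $\phi(\psi(\bbe_1\bbe_2))\equiv\phi(\psi(\bbe_1))\phi(\psi(\bbe_2))$, which is legitimate because every vertex of $\psi(\bbp)$ lies in $\lang{A,G}$, keeping the acting index equal to $1$), and cancelling the internal factors $\Lambda^{-1}_{\tau\bbe_i}\Lambda_{\iota\bbe_{i+1}}$ by (H4) and (H3). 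Now let $\bbp$ be closed at $w\in B^+$. By the retraction it is enough to prove $\phi(\psi(\bbp))\sim_Z 1_{\phi(\psi(w))}$, for then $\bbp\sim_Z\Lambda_w\Lambda_w^{-1}\sim_Z 1_w$ by (H4). The path $\psi(\bbp)$ is closed in $\Gamma(\gp)$ at the word $\psi(w)\in\lang{A,G}$, and since $X$ is a homotopy base for $\Gamma(\gp)$ we have $\psi(\bbp)\sim_X 1_{\psi(w)}$.

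Fix a derivation realising this $\sim_X$-equivalence as a finite sequence of elementary steps, each replacing a factor $\bbr\,(x\cdot\alpha\cdot y)\,\bbs$ by $\bbr\,(x\cdot\beta\cdot y)\,\bbs$, where $(\alpha,\beta)$ is either an $(\mathrm{H}1)$-pair (so $\alpha\beta^{-1}=[\bbe_1,\bbe_2]$), an $(\mathrm{H}4)$-backtrack ($\alpha=\bbe\bbe^{-1}$, $\beta$ empty), or the insertion/deletion of a closed path $\alpha\in X$. Crucially, every path occurring in this derivation is parallel to $\psi(\bbp)$, hence closed at $\psi(w)$, so all of its vertices represent the same element of $G$ and therefore lie in $\lang{A,G}$; this guarantees that the acting index is never $0$, so $\phi(1,-)$ is defined on every path of the derivation. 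Applying $\phi(1,-)$ termwise and using multiplicativity together with \eqref{pathidentity} to pull the context words $x,y$ inside $\phi$, matters reduce to showing that the $\phi$-images of consecutive paths are $\sim_Z$-equivalent. An $(\mathrm{H}4)$-backtrack maps to $\phi(k,\bbe)\,\phi(k,\bbe)^{-1}$ and is killed by (H4) in $\Gamma(\gq)$. For an $(\mathrm{H}1)$-step the discrepancy is the loop $\phi(1,\,x\cdot[\bbe_1,\bbe_2]\cdot y)\equiv\phi(1,[\,x\cdot\bbe_1,\;\bbe_2\cdot y\,])$; since $x\,(\iota\bbe_1)(\iota\bbe_2)\,y\in\lang{A,G}$ this is exactly a member of family \eqref{z2}. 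For an $X$-step the discrepancy is $\phi(1,\,x\cdot\alpha\cdot y)$ with $\alpha\in X$ and $x(\iota\alpha)y\in\lang{A,G}$, which is exactly a member of family \eqref{z3}. In each case the standard manipulation ``$\bbp\,\bbq^{-1}\sim_Z 1_{\iota\bbp}$ implies $\bbp\sim_Z\bbq$'' together with (H2) and (H3) upgrades the local equivalence to the full step; chaining the steps yields $\phi(\psi(\bbp))\sim_Z\phi(1,1_{\psi(w)})=1_{\phi(\psi(w))}$, as required.

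The step I expect to be the main obstacle is the transport bookkeeping in the last paragraph: one must check that the context words $x,y$ can always be absorbed so that the resulting loops land precisely in the families \eqref{z2} and \eqref{z3} (rather than in some ``translated'' version sitting at an arbitrary index $k=1x$), and simultaneously verify that the acting index never collapses to $0$, so that all $\phi$-images really are paths of $\Gamma(\gq)$. The ingredients that make this go through are the identities \eqref{identity} and \eqref{pathidentity}, the multiplicativity of $\phi(i,-)$ over composition with $\phi(i,\bbp^{-1})=\phi(i,\bbp)^{-1}$, the compatibility $x\cdot[\bbe_1,\bbe_2]\cdot y=[\,x\cdot\bbe_1,\,\bbe_2\cdot y\,]$, and the observation that all intermediate vertices of the derivation lie in $\lang{A,G}$.
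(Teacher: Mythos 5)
Your proof is correct and follows essentially the same route as the paper's: retract an arbitrary closed path onto the image of $\phi \circ \psi$ using the paths \eqref{z1}, then transport a derivation witnessing $\psi(\bbp) \sim_X 1_{\iota \psi(\bbp)}$ through $\phi(1,-)$, with \eqref{z2} absorbing the (H1) steps (via $x\cdot[\bbe_1,\bbe_2]\cdot y = [x\cdot\bbe_1,\bbe_2\cdot y]$), \eqref{z3} absorbing the $X$-insertions together with their (H2) contexts, and multiplicativity and inversion of $\phi$ handling (H3) and (H4). The paper gives only a sketch (deferring details to \cite{Malheiro2}); your write-up fills in exactly the bookkeeping the sketch omits, including the key observation that all intermediate vertices lie in $\lang{A,G}$ so the acting index never hits $0$.
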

\begin{proof}
This actually follows from a more general result proved in \cite{Malheiro2}. Since in this case it is not difficult, and for the sake of completeness, we offer a sketch of the proof here. For full details we refer the reader to \cite{Malheiro2}.

Let $C$ be an arbitrary closed path in $\Gamma(\mathcal{Q})$. We must prove that $C \sim_Z 1_{\iota C}$.

Let $D = \psi(C)$, noting that $D$ is a closed path in $\Gamma(\mathcal{P})$ and that each vertex of $D$ belongs to $\lang{A,G}$. First we consider the closed path $\phi(\psi(C)) = \phi(D)$ in $\Gamma(\mathcal{Q})$. Since $X$ is a homotopy base for the presentation $\mathcal{P}$ it follows that in $\Gamma(\mathcal{P})$ we have $D \sim_X 1_{\iota D}$. This means (see \cite[Lemma~2.1]{K2}) that there is a finite sequence of applications of (H1)--(H4) that can be used to transform $D$ into $1_{\iota D}$. Using \eqref{z2} each application of (H1) can be turned into a corresponding application in    $\Gamma(\mathcal{Q})$, and using \eqref{z3} and the fact that the vertices of $D$ belong to $\lang{A,G}$, each application of (H2) can be turned into a corresponding application in $\Gamma(\mathcal{Q})$. The relations (H3) may be dealt with since by definition $\phi(\bbp\; \bbq\; \bbr) = \phi(\bbp) \phi(\bbq) \phi(\bbr)$, and the relations (H4) are handled using $\phi(\bbp^{-1}) = \phi(\bbp)^{-1}$. We conclude that  $\phi(\psi(C)) = \phi(D)$ is a closed path in $\Gamma(\mathcal{Q})$ with $\phi(D) \sim_Z 1_{\iota \phi(D)}$.

Now we return our attention to the original closed path $C$. Using the paths \eqref{z1}, applied to each edge of the closed path $C$ in turn, we see that $C \sim_Z  \Lambda_{\iota C}\; \phi(\psi(C)) \; \Lambda_{\tau C}^{-1}$ where $\iota C \equiv \tau C$ since $C$ is a closed path. We now have
\[
C \sim_Z  \Lambda_{\iota C}\; \phi(\psi(C)) \; \Lambda_{\tau C}^{-1}  \sim_Z \Lambda_{\iota C}\; 1_{\phi(\psi(C))} \; \Lambda_{\tau C}^{-1} = \Lambda_{\iota C} \Lambda_{\iota C}^{-1} \sim 1_{\iota C}
\]
as required. \end{proof}

It follows that to prove Theorem~\ref{homotopybase} it is sufficient to show the following.

\begin{lem}\label{KcupW}
The homotopy relation generated by $K \cup W$ contains $Z$, and hence $K \cup W$ is a homotopy base for $\Gamma(\mathcal{Q})$.
\end{lem}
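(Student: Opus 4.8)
The plan is to show that the homotopy relation generated by $K \cup W$ contains each of the three families \eqref{z1}, \eqref{z2}, \eqref{z3} of closed paths comprising $Z$; since $Z$ is already known to be a homotopy base for $\Gamma(\mathcal{Q})$, this immediately yields that $K \cup W$ is too. I would therefore treat the three families in turn, dealing with the two ``structural'' families \eqref{z1} and \eqref{z2} first, and reserving the family \eqref{z3} involving the original base $X$ for last.

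For the family \eqref{z3}, the key observation is the identity \eqref{pathidentity}, which lets me rewrite $\phi(w_1 \cdot \mathbb{P} \cdot w_2)$, for a path $\mathbb{P} \in X$ with $w_1 (\iota \mathbb{P}) w_2 \in \lang{A,G}$, as $\phi(1,w_1) \cdot \phi(1 w_1, \mathbb{P}) \cdot \phi(1 w_1 \iota \mathbb{P}, w_2)$. By the homotopy axiom (H2), this path is homotopic (in the unconditioned sense $\sim$, and hence $\sim_{K \cup W}$) to a translate of $\phi(j, \mathbb{P})$ with $j = 1 w_1$; and $\phi(j, \mathbb{P})$ is precisely an element of $K$, since $\mathbb{P} \in X$ and $j(\iota \mathbb{P}) = 1 w_1 \iota \mathbb{P} \neq 0$. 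So each closed path of type \eqref{z3} reduces, via (H2) together with the generic homotopy axioms, to a two-sided translate of a member of $K$, putting it inside $\sim_{K \cup W}$. I expect the only delicate point here to be bookkeeping the vertex conditions (checking the relevant indices are nonzero and the flanking words lie in $\lang{A,G}$) so that $\phi$ is genuinely defined on every piece.

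For the family \eqref{z2}, I would argue that the closed path $\phi([\mathbb{E}_1,\mathbb{E}_2])$ is already a consequence of the generic homotopy axiom (H1) in $\Gamma(\mathcal{Q})$: applying $\phi$ edgewise to the interchange relation for $\mathbb{E}_1,\mathbb{E}_2$ produces exactly an (H1)-type closed path between the two images $\phi(j,\mathbb{E}_1)$ and $\phi(j,\mathbb{E}_2)$ (with $j=1$), using that $\phi$ respects composition, inverses, and the two-sided action. Hence \eqref{z2} lies in $\sim$ (and so in $\sim_{K \cup W}$) without needing any generator at all. The family \eqref{z1} is the one that genuinely uses $W$: each closed path $\mathbb{E}\, \Lambda_{\tau \mathbb{E}}\, (\phi(\psi(\mathbb{E})))^{-1}\, \Lambda_{\iota \mathbb{E}}^{-1}$ should be reduced to the generating paths $W = \{\bbe_r \Lambda_{r_{-1}} \phi(\psi(\bbe_r))^{-1} \Lambda_{r_{+1}}^{-1}\}$ by writing an arbitrary edge $\mathbb{E} = (w_1, r, \epsilon, w_2)$ as a two-sided translate $w_1 \cdot \bbe_r^{\epsilon} \cdot w_2$ of the basic edge $\bbe_r$, and then pushing the translation through the $\Lambda$'s and through $\phi(\psi(-))$ using the inductive definition of $\Lambda_w$ and the identities $\psi(w_1 \cdot \bbe \cdot w_2) = \psi(w_1)\cdot \psi(\bbe) \cdot \psi(w_2)$ together with \eqref{pathidentity}.

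The main obstacle I anticipate is in family \eqref{z1}: verifying that the translated/concatenated $\Lambda$-paths telescope correctly when $\mathbb{E}$ is replaced by its decomposition $w_1 \cdot \bbe_r^{\epsilon} \cdot w_2$. Concretely, I would need a lemma expressing $\Lambda_{w_1 b w_2}$ (or the relevant segments) in terms of $\Lambda_{w_1}$, $\Lambda_b$, $\Lambda_{w_2}$ and translates thereof, compatibly with the recursive definition $\Lambda_{bw'} = (\Lambda_b \cdot w')(\phi(\psi(b)) \cdot \Lambda_{w'})$, and to check that the discrepancy between $\Lambda$ applied to the image of a translated edge and the translate of the basic $W$-generator collapses under (H1)--(H4). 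This is the heart of the argument, and the sign $\epsilon$ (handling $\bbe_r$ versus $\bbe_r^{-1}$, using (H4)) is where I would be most careful. Once this reduction of \eqref{z1} to $W$ is in place, combining it with the treatments of \eqref{z2} and \eqref{z3} shows $Z \subseteq \sim_{K \cup W}$, and the lemma follows.
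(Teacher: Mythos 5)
Your proposal is correct and follows essentially the same route as the paper: the paper likewise handles \eqref{z2} purely by (H1) together with the identity \eqref{pathidentity} (Lemma~\ref{type2}), handles \eqref{z3} by using \eqref{pathidentity} to exhibit the path as a two-sided translate of the $K$-generator $\phi(1w_1,\mathbb{P})$ (Lemma~\ref{type3}), and handles \eqref{z1} exactly as you anticipate, via a telescoping lemma for $\Lambda_{w_1w_2w_3}$ (Lemma~\ref{split}) and a compatibility lemma for $\phi(\psi(-))$ on translated edges (Lemma~\ref{triangle}), reducing each edge $\mathbb{E}=(w_1,r,\epsilon,w_2)$ to the basic $W$-generator for $\bbe_r$ by two applications of (H1) (Lemma~\ref{type1}). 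The lemmas you flag as the anticipated ``heart of the argument'' are precisely the ones the paper proves.
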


The proof of Lemma~\ref{KcupW} will follow from the lemmas below. We consider each of the sets of closed paths \eqref{z1}, \eqref{z2}, and \eqref{z3} in turn.

\subsection*{Generating the paths \eqref{z2}}

\begin{lem}\label{type2}
Let $\bbe_1$ and $\bbe_2$ be edges from $\Gamma(\mathcal{P})$ where $\iota \bbe_1 \iota \bbe_2 \in \lang{A,G}$. Then  $\phi([\bbe_1, \bbe_2]) \sim 1_v$ in $\Gamma(\mathcal{Q})$,  where $v$ denotes the vertex 
$\iota \phi([\bbe_1, \bbe_2])$ of the graph $\Gamma(\mathcal{Q})$.
\end{lem}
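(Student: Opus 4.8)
The plan is to show that $\phi([\bbe_1,\bbe_2])$ is nothing other than the image of the defining square \textup{(H1)} transported into $\Gamma(\gq)$, and therefore collapses using only the homotopy axioms. Write $\bbe_1=(x_1,r,\epsilon,x_2)$ and $\bbe_2=(y_1,s,\delta,y_2)$, and abbreviate the four vertices of $\Gamma(\gp)$ involved by $a=\iota\bbe_1$, $a'=\tau\bbe_1$, $b=\iota\bbe_2$, $b'=\tau\bbe_2$. The hypothesis $\iota\bbe_1\,\iota\bbe_2=ab\in\lang{A,G}$ gives $1\cdot(ab)=1$, and since $a$ and $a'$ represent the same element of $S$ (they are the two sides of a single edge) the index $j:=1\cdot a=1\cdot a'$ is well defined; moreover $j\cdot b=j\cdot b'=1\neq 0$, so all of the applications of $\phi(1,-)$ and $\phi(j,-)$ below are defined.

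First I would expand $\phi(1,[\bbe_1,\bbe_2])$ edge by edge, using that $\phi(1,-)$ respects composition of (defined) paths together with the identity~\eqref{pathidentity}. Writing $F_1=\phi(1,\bbe_1)$, $F_2=\phi(j,\bbe_2)$, $Q=\phi(1,a)$, $Q'=\phi(1,a')$, $P=\phi(j,b)$, $P'=\phi(j,b')$, this yields
\[
\phi(1,\bbe_1\cdot b)=F_1\cdot P,\quad
\phi(1,a'\cdot\bbe_2)=Q'\cdot F_2,\quad
\phi(1,\bbe_1\cdot b')=F_1\cdot P',\quad
\phi(1,a\cdot\bbe_2)=Q\cdot F_2,
\]
where $1\cdot a=1\cdot a'=j$ is used in the second and fourth equalities. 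Here $F_1$ and $F_2$ are genuine edges of $\Gamma(\gq)$: by Theorem~\ref{TPres} the middle components $\phi(1\cdot x_1,r_{+1})=\phi(1\cdot x_1,r_{-1})$ and $\phi(j\cdot y_1,s_{+1})=\phi(j\cdot y_1,s_{-1})$ are relations of $U$ (the relevant actions are nonzero since $1\cdot a=j\neq0$ and $j\cdot b=1\neq0$). Combining with $\phi(i,\bbp^{-1})=\phi(i,\bbp)^{-1}$ I would read off
\[
\phi([\bbe_1,\bbe_2])=(F_1\cdot P)(Q'\cdot F_2)(F_1\cdot P')^{-1}(Q\cdot F_2)^{-1}.
\]

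Next I would observe that, since $\iota F_1=Q$, $\tau F_1=Q'$, $\iota F_2=P$ and $\tau F_2=P'$, the right-hand side is precisely the closed path $[F_1,F_2]$ associated with the two edges $F_1,F_2$ of $\Gamma(\gq)$, that is, the \textup{(H1)}-square for $F_1$ and $F_2$. Setting $\alpha=(F_1\cdot P)(Q'\cdot F_2)$ and $\beta=(Q\cdot F_2)(F_1\cdot P')$, axiom \textup{(H1)} gives $\alpha\sim\beta$, whence by \textup{(H3)} and \textup{(H4)}
\[
\phi([\bbe_1,\bbe_2])=\alpha\,\beta^{-1}\sim\beta\,\beta^{-1}\sim 1_v,
\]
which is the required conclusion, noting $\iota\beta=QP=v$.

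The only real work lies in the middle paragraph: tracking the index $j$, verifying $1\cdot a=1\cdot a'$ and $j\cdot b=j\cdot b'$, and checking that the middle components produced by $\phi$ are honest relations of $U$ so that $F_1,F_2$ are edges rather than mere formal tuples. Once the correct form of $\phi([\bbe_1,\bbe_2])$ is exposed there is no genuine obstacle, since the statement reduces to a single instance of \textup{(H1)} in the derivation graph of $G$; in particular no closed paths of $K\cup W$ (indeed no generators at all) are needed, which is exactly why the paths \eqref{z2} impose no conditions on the homotopy base.
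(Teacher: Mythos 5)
Your proposal is correct and is essentially the paper's own argument: both decompose the image of the square via \eqref{pathidentity}, use the key observation that $1 \iota \bbe_1 = 1 \tau \bbe_1$ (your index $j$) to recognize $\phi([\bbe_1,\bbe_2])$ as the (H1)-square of the two edges $\phi(1,\bbe_1)$ and $\phi(j,\bbe_2)$ of $\Gamma(\gq)$, and then collapse it using (H1) together with (H3) and (H4). Your write-up merely makes explicit some bookkeeping the paper leaves implicit (the nonvanishing of the indices and the final collapsing steps), so there is no substantive difference.
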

\begin{proof}
First observe that by definition $\phi(1 \tau \mathbb{E}_1, \mathbb{E}_2)$ is a path from $\phi(1 \tau \mathbb{E}_1, \iota \mathbb{E}_2)$ to $\phi(1 \tau \mathbb{E}_1, \tau \mathbb{E}_2)$, and we have the equality
$
\phi(1 \iota \mathbb{E}_1, \iota \mathbb{E}_2) \equiv
\phi(1 \tau \mathbb{E}_1, \iota \mathbb{E}_2)
$
since $\iota \mathbb{E}_1$ and $\tau \mathbb{E}_1$ both represent the same element of $S$. Also the paths
$\phi(1 \tau \mathbb{E}_1, \mathbb{E}_2)$ and
$\phi(1 \iota \mathbb{E}_1, \mathbb{E}_2)$ are identically equal, since $1 \tau \mathbb{E}_1 = 1 \iota \mathbb{E}_1 \in I$.
Now we have
\[
\begin{array}{llll}
 &   & \phi( \bbe_1 \cdot \iota \bbe_2)  \phi ( \tau \bbe_1  \cdot \bbe_2 ) &   \\
& =     &       (\phi( \bbe_1 ) \cdot \phi( 1 \iota \bbe_1, \iota \bbe_2))
                (\phi( \tau \bbe_1) \cdot \phi( 1 \tau \bbe_1, \bbe_2 )) & \mbox{(by \eqref{pathidentity})} \\
& \sim  &       (\phi( \iota \bbe_1 ) \cdot \phi( 1 \tau \bbe_1, \bbe_2))
                (\phi( \bbe_1) \cdot \phi( 1 \tau \bbe_1, \tau \bbe_2 )) & \mbox{(applying (H1)} \\
& =     &        (\phi( \iota \bbe_1 ) \cdot \phi( 1 \iota \bbe_1, \bbe_2))
                (\phi( \bbe_1) \cdot \phi( 1 \iota \bbe_1, \tau \bbe_2 )) & \mbox{(since $1 \tau \mathbb{E}_1 = 1 \iota \mathbb{E}_1$)} \\
& = &           \phi( \iota \bbe_1 \cdot \bbe_2)  \phi ( \bbe_1  \cdot \tau \bbe_2 ) & \mbox{(by \eqref{pathidentity}).}
\end{array}
\]
Along with the definition of $\phi$ on paths and the fact that $\phi(\bbe)^{-1} = \phi(\bbe^{-1})$ this completes the proof of the lemma.
\end{proof}

\subsection*{Generating the paths \eqref{z3}}

\begin{lem}\label{type3}
For any $\mathbb{P} \in {X}$, and $w_1, w_2 \in A^*$ such that
$w_1 (\iota \mathbb{P}) w_2 \in \lang{A,G}$ we have  $\phi(w_1 \cdot \mathbb{P} \cdot w_2) \sim_K 1_v$, where $v$ denotes the vertex $\iota \phi(w_1 \cdot \mathbb{P} \cdot w_2)$ of the graph $\Gamma(\mathcal{Q})$.
\end{lem}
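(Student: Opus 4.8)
The plan is to use the path identity \eqref{pathidentity} to factor $\phi(w_1 \cdot \mathbb{P} \cdot w_2)$ so that the homotopy base path $\mathbb{P}$ appears as a central subpath of the special form $\phi(j,\mathbb{P})$ --- which is exactly a generator in $K$ --- with the two flanking factors being mere words of $B^*$ acting on the sides. Once the path has been put into this shape, a single application of the two-sided invariance axiom (H2) collapses it to an empty path.

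Concretely, I would first apply \eqref{pathidentity} with $i = 1$ to obtain
\[
\phi(w_1 \cdot \mathbb{P} \cdot w_2) = \phi(1, w_1) \cdot \phi(1 w_1, \mathbb{P}) \cdot \phi(1 w_1 \iota \mathbb{P}, w_2),
\]
and set $j := 1 w_1$. The outer factors $\phi(1,w_1)$ and $\phi(j \iota \mathbb{P}, w_2)$ are words of $B^*$, while the central factor is $\phi(j,\mathbb{P})$. Since $\mathbb{P} \in X$ is a closed path, so is $\phi(j,\mathbb{P})$ (because $\iota \phi(j,\mathbb{P}) \equiv \phi(j,\iota \mathbb{P})$ and $\tau \phi(j,\mathbb{P}) \equiv \phi(j,\tau\mathbb{P})$), and provided it lies in $K$ we have $\phi(j,\mathbb{P}) \sim_K 1_{\iota \phi(j,\mathbb{P})}$. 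Applying (H2) with left word $\phi(1,w_1)$ and right word $\phi(j\iota\mathbb{P},w_2)$ then yields
\[
\phi(w_1 \cdot \mathbb{P} \cdot w_2) = \phi(1,w_1) \cdot \phi(j,\mathbb{P}) \cdot \phi(j\iota\mathbb{P},w_2) \sim_K 1_v,
\]
with $v \equiv \iota \phi(w_1 \cdot \mathbb{P} \cdot w_2)$, as required.

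The only step requiring genuine care --- and the main obstacle --- is verifying that the central factor really is a member of $K$, that is, that $j = 1 w_1 \in I$ and $j \iota \mathbb{P} \neq 0$. This is where the hypothesis $w_1 (\iota \mathbb{P}) w_2 \in \lang{A,G}$ is essential: this word represents an element of $G = C_1$, so acting by it on the index $1$ returns $1 \neq 0$; since the zero state is absorbing for the action of $S$ on $I \cup \{0\}$, every intermediate index along the prefix --- in particular $1 w_1$ and $1 w_1 \iota \mathbb{P}$ --- must be non-zero. Hence $j \in I$ with $j \iota \mathbb{P} \neq 0$, and as $\mathbb{P} \in X$ this gives $\phi(j,\mathbb{P}) \in K$, completing the argument.
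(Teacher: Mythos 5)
Your proposal is correct and follows essentially the same route as the paper's own proof: both apply \eqref{pathidentity} with $i=1$ to factor $\phi(w_1 \cdot \mathbb{P} \cdot w_2)$ as $\phi(1,w_1) \cdot \phi(1w_1,\mathbb{P}) \cdot \phi(1w_1 \iota\mathbb{P}, w_2)$ and then collapse the central factor, which lies in $K$, via (H2). Your final paragraph verifying that $1w_1 \in I$ and $(1w_1)(\iota\mathbb{P}) \neq 0$ (using that $0$ is absorbing and $1 \cdot w_1(\iota\mathbb{P})w_2 = 1$) simply makes explicit what the paper asserts without comment in the phrase ``Since $\phi(1w_1,\mathbb{P}) \in K$.''
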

\begin{proof}
Since $\phi(1 w_1, \mathbb{P}) \in {K}$, using \eqref{pathidentity} we obtain
\[
\phi(w_1 \cdot \mathbb{P} \cdot w_2) = \phi(1, w_1) \cdot \phi(1 w_1, \mathbb{P}) \cdot \phi(1 w_1 \iota \mathbb{P}, w_2) \sim_{{K}} 1_v.
\]
\end{proof}

\subsection*{Generating the paths \eqref{z1}}

For this family of paths we must first prove two straightforward lemmas.

\begin{lem}\label{triangle}
For any edge $\bbe = (w_1, r, \epsilon, w_2)$ of $\Gamma(\mathcal{Q})$, with $\bbe_r = (1,r,+1,1)$
 we have
\[
\phi(\psi(\bbe)) = \phi(\psi(w_1)) \cdot \phi(\psi(\bbe_r)^{\epsilon}) \cdot \phi(\psi(w_2)).
\]
\end{lem}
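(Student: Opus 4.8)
The plan is to unfold the definition of $\psi$ on the edge $\bbe$, apply the substitution identity \eqref{pathidentity} to $\phi$, and then observe that every index argument that arises collapses to $1$. This last observation is where all the content lies, and it rests on the fact that the words appearing in index position represent elements of the subgroup $G$.

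First I would write out $\psi(\bbe) = \psi(w_1) \cdot \psi(\bbe_r)^{\epsilon} \cdot \psi(w_2)$ directly from the definition of $\psi$ on an edge $\bbe = (w_1, r, \epsilon, w_2)$, recalling that $\psi(w_1), \psi(w_2) \in A^*$ and that $\psi(\bbe_r)^{\epsilon}$ is a path of $\Gamma(\mathcal{P})$. Reading $\phi(\psi(\bbe))$ as $\phi(1, \psi(\bbe))$ and applying \eqref{pathidentity} with $i = 1$, left word $\psi(w_1)$, path $\psi(\bbe_r)^{\epsilon}$ and right word $\psi(w_2)$, I obtain
\[
\phi(\psi(\bbe)) = \phi(1, \psi(w_1)) \cdot \phi(1 \cdot \psi(w_1), \psi(\bbe_r)^{\epsilon}) \cdot \phi\big(1 \cdot \psi(w_1) \cdot \iota(\psi(\bbe_r)^{\epsilon}), \psi(w_2)\big).
\]
It then remains to identify the two nontrivial index arguments $1 \cdot \psi(w_1)$ and $1 \cdot \psi(w_1) \cdot \iota(\psi(\bbe_r)^{\epsilon})$ with $1$.

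The key point, which I expect to be the only substantive step, is that every word occurring in an index position represents an element of $G$, and $G$ fixes the index $1$ because $C_1 = G$ and $Gg = G$ for all $g \in G$. For $\psi(w_1)$ this holds because $\psi$ maps each generator $[i,a]$ to $e r_i a r_{ia}' \in \lang{A,G}$, so $\psi(w_1) \in \lang{A,G}$ when $w_1 \in B^+$ (and $\psi(w_1)$ is empty otherwise), giving $1 \cdot \psi(w_1) = 1$. For $\iota(\psi(\bbe_r)^{\epsilon})$, which equals $\psi(r_{+1})$ or $\psi(r_{-1})$ according to the sign of $\epsilon$, I would use that $(r_{+1}, r_{-1}) \in U$ is a defining relation of $G$, so both $r_{\pm 1}$ represent an element of $G$ and hence $\iota(\psi(\bbe_r)^{\epsilon}) \in \lang{A,G}$; thus $1 \cdot \iota(\psi(\bbe_r)^{\epsilon}) = 1$ and therefore $1 \cdot \psi(w_1) \cdot \iota(\psi(\bbe_r)^{\epsilon}) = 1$. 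The same observation confirms that the nonvanishing hypothesis of \eqref{pathidentity} holds, since the action never leaves the index $1$. Substituting these simplifications and using the shorthand $\phi(\cdot) = \phi(1, \cdot)$ then yields exactly $\phi(\psi(w_1)) \cdot \phi(\psi(\bbe_r)^{\epsilon}) \cdot \phi(\psi(w_2))$, as claimed; the only care needed is with the degenerate cases $w_1 = 1$ or $w_2 = 1$, which are absorbed by the conventions $\phi(i,1) = 1$ and $i \cdot 1 = i$.
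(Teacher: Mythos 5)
Your proof is correct and follows essentially the same route as the paper: unfold $\psi$ on the edge, apply \eqref{pathidentity} with $i=1$, and observe that the index arguments collapse to $1$ because $\psi(w_1)$ and $\iota(\psi(\bbe_r)^{\epsilon})$ lie in $\lang{A,G}$ (the paper states this last point in one line, while you justify it from the definition of $\psi$ and the fact that $U$ is a presentation of $G$). Your explicit handling of the degenerate cases $w_1 = 1$ or $w_2 = 1$ via the conventions $\phi(i,1)=1$ and $i\cdot 1 = i$ is a detail the paper leaves implicit.
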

\begin{proof}
It follows from the definition of $\psi(\bbe)$ and \eqref{pathidentity} that
\begin{align*}
\phi(\psi(\bbe))  & =  \phi(\psi(w_1) \cdot \psi(\bbe_r)^{\epsilon} \cdot \psi(w_2) ) \\
                  & =  \phi(1,\psi(w_1)) \cdot \phi(1 \psi(w_1), \psi(\bbe_r)^{\epsilon}) \cdot
                       \phi(1\psi(w_1) \iota \psi(\bbe_r)^{\epsilon}, \psi(w_2) ) \\
                 & =  \phi(\psi(w_1)) \cdot \phi(\psi(\bbe_r)^{\epsilon}) \cdot \phi(\psi(w_2)),
\end{align*}
since $\psi(w_1),  \psi(w_1) \iota \psi(\bbe_r)^{\epsilon} \in \lang{A,G}$ implies that $1 \psi(w_1) = 1 \psi(w_1) \iota \psi(\bbe_r)^{\epsilon} = 1$.
\end{proof}

\begin{lem}\label{split}
For all $w_1, w_2, w_3 \in B^+$ we have
\begin{enumerate}[(i)]
\item $\Lambda_{w_1 w_2} = (\Lambda_{w_1} \cdot w_2) (\phi (\psi (w_1)) \cdot \Lambda_{w_2} )$
\item $\Lambda_{w_1 w_2 w_3} = (\Lambda_{w_1} \cdot w_2 w_3) (\phi (\psi (w_1)) \cdot  \Lambda_{w_2} \cdot w_3) (\phi (\psi (w_1)) \phi (\psi (w_2)) \cdot \Lambda_{w_3}     )$.
\end{enumerate}
\end{lem}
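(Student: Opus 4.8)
The plan is to prove Lemma~\ref{split} by induction on word length, deriving part~(ii) from part~(i) by applying part~(i) twice. The key tool is the inductive definition of $\Lambda_w$ together with the two-sided action of $B^*$ on paths. Part~(i) is really a generalisation of the single-letter recursion $\Lambda_{bw'} = (\Lambda_b \cdot w')(\phi(\psi(b)) \cdot \Lambda_{w'})$ from the definition of $\Lambda$, with an arbitrary prefix $w_1$ in place of a single letter $b$. So the heart of the matter is establishing (i); once (i) is available, (ii) follows by routine manipulation.

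\textbf{Proving part~(i).}
First I would fix $w_2 \in B^+$ and induct on $|w_1|$. The base case $|w_1| = 1$, say $w_1 \equiv b = [i,a]$, is exactly the defining recursion for $\Lambda_{bw_2}$, since $\phi(\psi(b)) \equiv \phi(er_i a r'_{ia})$. For the inductive step write $w_1 \equiv b w_1'$ with $b = [i,a] \in B$ and $w_1' \in B^+$. Applying the defining recursion to $\Lambda_{w_1 w_2} = \Lambda_{b(w_1' w_2)}$ gives
\[
\Lambda_{w_1 w_2} = (\Lambda_b \cdot w_1' w_2)\bigl(\phi(\psi(b)) \cdot \Lambda_{w_1' w_2}\bigr).
\]
Now I would apply the inductive hypothesis to $\Lambda_{w_1' w_2}$, expand it as $(\Lambda_{w_1'} \cdot w_2)(\phi(\psi(w_1')) \cdot \Lambda_{w_2})$, and substitute. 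Using that the left action of $\phi(\psi(b))$ distributes over path composition and commutes with the right action of $w_2$, together with the homomorphism property $\phi(\psi(b))\phi(\psi(w_1')) \equiv \phi(\psi(bw_1')) \equiv \phi(\psi(w_1))$ from \eqref{homonG} (both $\psi(b)$ and $\psi(w_1')$ lie in $\lang{A,G}$), one recombines $(\Lambda_b \cdot w_1')(\phi(\psi(b))\cdot\Lambda_{w_1'})$ back into $\Lambda_{w_1}$ by the recursion again. This yields the desired factorisation for $w_1$.

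\textbf{Deriving part~(ii).}
With (i) in hand, apply it first to the splitting $w_1 \cdot (w_2 w_3)$ to get $\Lambda_{w_1 w_2 w_3} = (\Lambda_{w_1}\cdot w_2 w_3)(\phi(\psi(w_1))\cdot \Lambda_{w_2 w_3})$, then apply (i) again to $\Lambda_{w_2 w_3}$ and push the left action of $\phi(\psi(w_1))$ through, using again that this action distributes over composition and that $\phi(\psi(w_1))\phi(\psi(w_2)) \equiv \phi(\psi(w_1 w_2))$. Collecting the three resulting factors gives exactly the stated expression for $\Lambda_{w_1 w_2 w_3}$.

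\textbf{Main obstacle.}
The computations themselves are purely formal, so the only real care needed is bookkeeping: keeping track of which words act on the left and which on the right of each $\Lambda$, and repeatedly invoking \eqref{homonG} and the homomorphism property of $\psi$ to identify adjacent vertices (so that the compositions are genuinely defined and the factors glue correctly). The one point worth checking carefully is that the left action $\phi(\psi(w_1)) \cdot (-)$ commutes with the right action $(-)\cdot w_2$, which is immediate from the two-sided action being a genuine bi-action, but is what makes the recombination in the inductive step valid.
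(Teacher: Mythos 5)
Your proof is correct and takes essentially the same approach as the paper: the paper's proof simply states that part (i) ``follows straight from the definition of $\Lambda_w$'' (your induction on $|w_1|$, recombining via the defining recursion and \eqref{homonG}, is the natural formalisation of that remark) and obtains part (ii) by applying part (i) twice, exactly as you do.
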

\begin{proof}
Part (i) follows straight from the definition of $\Lambda_w$, and part (ii) is proved by applying part (i) twice.
\end{proof}

Now we can finish the proof.

\begin{lem}\label{type1}
For any edge $\mathbb{E}$ of $\Gamma(\mathcal{Q})$ we have
\[
\Lambda_{\iota \bbe} \; \phi (\psi (\bbe)) \sim_W \bbe \; \Lambda_{\tau \bbe}.
\]
\end{lem}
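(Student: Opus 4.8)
The plan is to read the statement as the assertion that the closed path \eqref{z1} attached to $\bbe$ is $\sim_W$-trivial, and to reduce everything to a single \emph{central} relation, which I will call $(\star)$:
\[
\Lambda_{r_\epsilon}\,\phi(\psi(\bbe_r)^\epsilon) \sim_W \bbe_r^\epsilon\,\Lambda_{r_{-\epsilon}} ,
\]
where $\bbe = (w_1,r,\epsilon,w_2)$, $\bbe_r = (1,r,+1,1)$, the symbol $r_\epsilon$ (resp.\ $r_{-\epsilon}$) denotes $r_{+1}$ or $r_{-1}$ according to the sign, and $\bbe_r^\epsilon$ means $\bbe_r$ when $\epsilon=+1$ and $\bbe_r^{-1}$ when $\epsilon=-1$. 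For $\epsilon=+1$, relation $(\star)$ is exactly a defining element of $W$: splitting the closed path $\bbe_r\Lambda_{r_{-1}}\phi(\psi(\bbe_r))^{-1}\Lambda_{r_{+1}}^{-1}$ at the vertex $\phi(\psi(r_{-1}))$ yields $\bbe_r\Lambda_{r_{-1}}\sim_W\Lambda_{r_{+1}}\phi(\psi(\bbe_r))$. For $\epsilon=-1$ I would derive $(\star)$ from the $\epsilon=+1$ case by pre-composing with $\bbe_r^{-1}$ and post-composing with $\phi(\psi(\bbe_r))^{-1}$, then cancelling via (H4) and using $\phi(\psi(\bbe_r))^{-1}=\phi(\psi(\bbe_r)^{-1})$.

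Next I would isolate the two structural ingredients that let the context $w_1,w_2$ be peeled off. Lemma~\ref{split}(ii) expands $\Lambda_{\iota\bbe}=\Lambda_{w_1 r_\epsilon w_2}$ as a composite of three factors acting on the $w_1$-, $r_\epsilon$-, and $w_2$-regions, while Lemma~\ref{triangle} gives $\phi(\psi(\bbe))=\phi(\psi(w_1))\cdot\phi(\psi(\bbe_r)^\epsilon)\cdot\phi(\psi(w_2))$. Substituting both into $\Lambda_{\iota\bbe}\,\phi(\psi(\bbe))$ and into $\bbe\,\Lambda_{\tau\bbe}$ rewrites the target as an equality of two explicit composites that differ only in the order in which three operations on disjoint regions are performed: the rewrite $\Lambda_{w_1}$ converting the left region $w_1\to\phi(\psi(w_1))$, the middle rewrite (where $(\star)$ lives), and $\Lambda_{w_2}$ converting the right region $w_2\to\phi(\psi(w_2))$.

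The core of the argument is then two applications of the interchange law, that is, the standard consequence of (H1)--(H3) that operations on disjoint regions of a word commute up to $\sim$, obtained by inducting on path lengths from the single-edge case (H1). First I would slide $\Lambda_{w_2}$ past the middle path $\phi(\psi(\bbe_r)^\epsilon)$, after which the middle factors fuse into $\phi(\psi(w_1))\cdot\bigl(\Lambda_{r_\epsilon}\phi(\psi(\bbe_r)^\epsilon)\bigr)\cdot w_2$; applying $(\star)$ in this context through (H2) and (H3) replaces this by $\phi(\psi(w_1))\cdot\bigl(\bbe_r^\epsilon\Lambda_{r_{-\epsilon}}\bigr)\cdot w_2$. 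A second interchange then slides $\Lambda_{w_1}$ past $\bbe_r^\epsilon$, after which the composite coincides term-by-term with the expansion of $\bbe\,\Lambda_{\tau\bbe}$. Since $\sim\ \subseteq\ \sim_W$, every step is a valid $\sim_W$-equivalence, and chaining them gives $\Lambda_{\iota\bbe}\,\phi(\psi(\bbe))\sim_W\bbe\,\Lambda_{\tau\bbe}$.

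The main obstacle I anticipate is bookkeeping rather than conceptual: one must track carefully which factor carries the raw context letters $w_i$ and which carries the transformed context $\phi(\psi(w_i))$, and verify that the endpoints line up so that each interchange and each application of (H2)/(H3) is legitimate (the mismatch between $w_1$ and $\phi(\psi(w_1))$ is precisely what the two $\Lambda_{w_i}$-slides repair). A minor subtlety is the degenerate cases in which $w_1$ or $w_2$ is empty, where Lemma~\ref{split}(ii) does not literally apply; these I would dispatch directly using Lemma~\ref{split}(i) (or $(\star)$ itself when both are empty), noting that $\phi(\psi(1))=1$ and $\Lambda_1$ is the empty path.
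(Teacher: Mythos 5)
Your proposal is correct and follows essentially the same route as the paper's proof: expand $\Lambda_{\iota \bbe}$ and $\phi(\psi(\bbe))$ via Lemma~\ref{split}(ii) and Lemma~\ref{triangle}, slide the disjoint-region factors past each other by (H1), apply the defining $W$-relation $\Lambda_{r_{\epsilon}}\phi(\psi(\bbe_r))^{\epsilon} \sim_W \bbe_r^{\epsilon}\Lambda_{r_{-\epsilon}}$ in context, and reassemble into $\bbe\,\Lambda_{\tau\bbe}$, with the empty-$w_i$ cases handled separately. Your treatment is, if anything, slightly more explicit than the paper's on two points it leaves implicit: the derivation of the central relation for $\epsilon=-1$ via (H4), and the fact that the interchange of operations on disjoint regions for paths (rather than single edges) is an induction from (H1).
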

\begin{proof}
Let $\mathbb{E} = (w_1, r, \epsilon, w_2)$ be an arbitrary edge of $\Gamma(\mathcal{Q})$. First suppose that $w_1$ and $w_2$ are both non-empty. Then we have
\[
\begin{array}{llll}
\Lambda_{\iota \bbe} \phi (\psi (\bbe)) & = & (\Lambda_{w_1}\cdot r_{\epsilon} w_2)
(\phi (\psi(w_1)) \cdot \Lambda_{r_{\epsilon}} \cdot w_2)
(\phi (\psi (w_1)) \phi (\psi (r_{\epsilon})) \cdot \Lambda_{w_2})
 & \\
 & &  \quad \quad \quad \quad  \quad \quad  \quad \quad \quad \quad \quad \quad \phi(\psi(w_1)) \cdot \phi(\psi(\bbe_r)^{\epsilon}) \cdot \phi(\psi(w_2)) & \\
 & & &  \\
 & & \quad \quad \quad \quad   \quad \quad \quad  \quad \quad \quad \quad \quad \quad \mbox{(by Lemmas~\ref{triangle} and \ref{split})} & \\
 & & &  \\
 & \sim & (\Lambda_{w_1}\cdot r_{\epsilon} w_2)
(\phi (\psi(w_1)) \cdot \Lambda_{r_{\epsilon}} \cdot w_2)
(\phi (\psi (w_1)) \cdot \phi (\psi (\bbe_r)^{\epsilon}) \cdot w_2)
 & \\
 & &  \quad \quad \quad \quad  \quad \quad  \quad \quad \quad \quad \quad \quad \phi(\psi(w_1)) \phi(\psi(r_{-\epsilon})) \cdot \Lambda_{w_2} & \\
 & & &  \\
 & & \quad \quad \quad \quad   \quad \quad \quad  \quad \quad \quad \quad \quad \quad \mbox{(applying (H1))} & \\
 & & &  \\
 & \sim_W & (\Lambda_{w_1}\cdot r_{\epsilon} w_2)
(\phi (\psi(w_1)) \cdot \bbe_r^{\epsilon} \cdot w_2)
(\phi (\psi (w_1)) \cdot \Lambda_{r_{- \epsilon}} \cdot w_2)
 & \\
 & &  \quad \quad \quad \quad  \quad \quad  \quad \quad \quad \quad \quad \quad \phi(\psi(w_1)) \phi(\psi(r_{-\epsilon})) \cdot \Lambda_{w_2} & \\
 & & &  \\
 & & \quad \quad \quad \quad   \quad \quad \quad  \quad \quad \quad \quad \quad \quad \mbox{(since $\Lambda_{r_{\epsilon}} \phi (\psi (\bbe_r))^{\epsilon} \sim_W \bbe_r^{\epsilon} \Lambda_{r_{- \epsilon}} $)} & \\
 & & &  \\
 & \sim & (w_1 \cdot \bbe_r^{\epsilon} \cdot w_2)
(\Lambda_{w_1} \cdot r_{-\epsilon} w_2)
(\phi (\psi (w_1)) \cdot \Lambda_{r_{- \epsilon}} \cdot w_2)
 & \\
 & &  \quad \quad \quad \quad  \quad \quad  \quad \quad \quad \quad \quad \quad \phi(\psi(w_1)) \phi(\psi(r_{-\epsilon})) \cdot \Lambda_{w_2} & \\
 & & &  \\
 & & \quad \quad \quad \quad   \quad \quad \quad  \quad \quad \quad \quad \quad \quad \mbox{(applying (H1))} & \\
 & & &  \\
 & = & \bbe \Lambda_{\tau \bbe} \quad \quad \quad \quad \quad \ \  \quad \quad \quad \quad \quad \mbox{(by Lemma~\ref{split} and the definition of $\bbe$)}.
\end{array}
\]
If $w_1$ is empty, then this sequence of deductions holds if we replace every expression of the form $\Lambda_{w_1} \cdot u$ $(u \in B^+)$ by the empty path $1_u$ at $u$. Similarly, if $w_2$ is empty then the deductions above are valid if we replace each occurrence of $u \cdot \Lambda_{w_2}$ $(u \in B^+)$ by $1_u$.  
\end{proof}

This completes the proof of Lemma~\ref{KcupW} and hence also of
Theorem~\ref{homotopybase}. Moreover, since when all of $X$, $A$, $R$ and $I$ are finite it implies that all of $B$, $U$, and $K \cup W$ are finite, we obtain our first main result, Theorem~\ref{bigtosmall}, as a corollary.

\begin{cor}\label{mainresultcorol}
Let $S$ be a semigroup and let $G$ be a subgroup of $S$. If $S$ has finite derivation type,
and $G$ has finite translational index in $S$, then $G$ has finite derivation type.
\end{cor}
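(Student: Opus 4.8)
The plan is to obtain the corollary as an essentially immediate consequence of Theorem~\ref{homotopybase}, with the bulk of what remains being a bookkeeping check that all the objects involved are finite. Since $S$ has finite derivation type it is, by definition, finitely presented, so I would first fix a \emph{finite} presentation $\mathcal{P} = \lb A | R \rb$ for $S$ together with a \emph{finite} homotopy base $X$ for $\Gamma(\mathcal{P})$, taken without loss of generality to be a set of closed paths (the paper has already noted that the two formulations of a homotopy base are interchangeable). The existence of such a pair is exactly what $\mathrm{FDT}$ provides, and by its independence from the choice of finite presentation (\cite[Theorem~4.3]{Squier1} and \cite[Theorem~3]{Malheiro1}) we are free to make this choice. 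The hypothesis that $G$ has finite translational index means precisely that the index set $I$ of the right cosets of $G$ is finite.

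Next I would invoke Theorem~\ref{TPres} to produce the presentation $\lb B | U \rb$ of $G$. With $A$, $R$ and $I$ all finite, the generating set $B = \{[i,a] : i\in I,\ a\in A,\ ia\neq 0\}$ is finite (it has at most $|I|\cdot|A|$ elements), and the relation set $U$ is finite as well, since its relations are indexed by the finite sets $\{(i,(u=v)) : i\in I,\ (u=v)\in R\}$ and $\{(i,a) : i\in I,\ a\in A\}$. In particular this already shows that $G$ is finitely presented, so that it is meaningful to ask whether $G$ has finite derivation type in the first place.

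With the finite presentation $\lb B | U \rb$ in hand, I would apply Theorem~\ref{homotopybase} to conclude that $K \cup W$ is a homotopy base for $\Gamma(\lb B | U \rb)$. It then remains only to count: the set $K = \{\phi(j,\mathbb{P}) : \mathbb{P}\in X,\ j\in I,\ j(\iota\mathbb{P})\neq 0\}$ has at most $|X|\cdot|I|$ elements and so is finite because $X$ and $I$ are, while $W$ is finite because it is indexed by $U$ and $I$, both of which are finite. Hence $K \cup W$ is a finite homotopy base for the finite presentation $\lb B | U \rb$ of $G$, and therefore $G$ has finite derivation type, as required.

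The point to stress is that there is essentially no genuine obstacle left to overcome at this stage: all of the real difficulty has already been absorbed into Theorem~\ref{homotopybase}, whose construction of $K \cup W$ out of $X$ is engineered precisely so that finiteness of $X$, $A$, $R$ and $I$ forces finiteness of the output. The only thing one must be careful about is confirming that each hypothesis feeds through cleanly --- that a finite presentation of $S$ and a finite homotopy base for it, together with finiteness of $I$, translate into the finiteness of $B$, $U$ and $K \cup W$ --- and this is routine cardinality counting once the definitions are unwound.
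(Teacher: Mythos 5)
Your proposal is correct and follows exactly the paper's own route: the paper derives this corollary from Theorem~\ref{homotopybase} by the same observation, namely that finiteness of $X$, $A$, $R$ and $I$ (the latter being precisely finite translational index) forces finiteness of $B$, $U$ and $K \cup W$, so that Theorem~\ref{TPres} yields a finite presentation of $G$ and $K \cup W$ is a finite homotopy base for it. Nothing is missing; the cardinality bookkeeping you supply is all the paper itself invokes.
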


\subsection{Cosets and Green's relations}

The notion of coset being used in this article is best thought of when viewed from the point of view of Green's relations on the semigroup. Green's relations are the equivalence relations $\gr$, $\gl$, $\gd$, $\gh$ and  $\gj$ defined on a semigroup $S$ by
\[
\begin{array}{c}
x \gr y \Leftrightarrow xS^1 = yS^1, \ x \gl y \Leftrightarrow S^1x = S^1y, \ x \gj y \Leftrightarrow S^1xS^1 = S^1yS^1 \\
\gd = \gr \circ \gl = \gr \circ \gl, \ \gh = \gr \cap \gl.
\end{array}
\]
Here $S^1$ denotes the semigroup $S$ with an identity element $1$ adjoined. Since their introduction in \cite{Green1}, Green's relations have played a fundamental role in the development of the structure theory of semigroups. For more background on Green's relations and their importance in semigroup theory we refer the reader to \cite{Howie1}. In particular, the maximal subgroups of a semigroup $S$ are precisely the $\gh$-classes of $S$ that contain idempotents. Hence Theorem~\ref{bigtosmall} has the following corollary relating  the property of $\mathrm{FDT}$ in a semigroup  with the property in its maximal subgroups.

\begin{cor}\label{maxsubgroupFDT}
Let $S$ be a semigroup  and let $H$ be a maximal subgroup of $S$. If $S$ has finite derivation type, and the $\gr$-class of $H$ contains only finitely many $\gh$-classes, then $H$ has finite derivation type.
\end{cor}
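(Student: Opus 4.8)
The plan is to derive this as a special case of Theorem~\ref{bigtosmall}, the only real work being to recognise the hypothesis about $\gh$-classes as a restatement of finite translational index. Since $H$ is a maximal subgroup, it is the $\gh$-class $H_e$ of an idempotent $e = e^2 \in S$, and $e$ is the identity element of the group $H$. Write $R_e$ for the $\gr$-class of $e$ (equivalently, of $H$), and recall the standard fact that $e$ acts as a left identity on $R_e$: if $x \in R_e$ then $x \in eS^1$, so $x = ey$ and hence $ex = e^2 y = ey = x$. I would first prove that the right cosets of $H$ in $S$ are exactly the $\gh$-classes contained in $R_e$; granting this, the number of right cosets equals the number of $\gh$-classes of $R_e$, so $H$ has finite translational index if and only if the $\gr$-class of $H$ contains only finitely many $\gh$-classes, and Theorem~\ref{bigtosmall} then yields the conclusion.

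To match cosets with $\gh$-classes I would argue in two stages. Stage one: every right coset of $H$ has the form $Hx$ with $x \in R_e$, and conversely each such $Hx$ is a coset. For the converse, if $x \in R_e$ then $x \,\gr\, e$ gives $t \in S^1$ with $xt = e$; the choice $x = e$ produces the trivial coset $He = H$, while for $x \neq e$ one has $t \in S$ and $Hxt = He = H$, so $Hx$ is a coset. For the forward direction, if $Hs$ is a coset then $Hss' = H$ for some $s' \in S$, so $e = hss'$ for some $h \in H$; putting $y = hs$ one has $Hy = Hs$ (since $h$ is a unit of $H$), and the identities $ey = y$ and $ys' = e$ force $yS^1 = eS^1$, i.e. $y \in R_e$. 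Thus $Hs = Hy$ with $y \in R_e$.

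Stage two, the heart of the matter, is the identification $Hx = H_x$ (the $\gh$-class of $x$) for each $x \in R_e$: this is the classical fact that $H_e$ acts on $R_e$ with orbits precisely the $\gh$-classes lying in $R_e$. I would obtain it from Green's Lemma: since $x \in R_e$ we may write $x = es$ for some $s \in S^1$, and as $e \,\gr\, x$ the right translation $\rho_s \colon y \mapsto ys$ restricts to an $\gh$-class-preserving bijection of $L_e$ onto $L_x$, carrying $H_e$ bijectively onto $H_{es} = H_x$. Since $he = h$ for $h \in H_e$, the map $h \mapsto hx = hes = hs = \rho_s(h)$ is exactly this restriction, whence $Hx = H_e x = H_x$. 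Consequently the family of right cosets of $H$ coincides, as a family of subsets of $S$, with the family of $\gh$-classes contained in $R_e$, so one family is finite exactly when the other is.

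I expect the routine formalities (writing $e$ as a left identity, unwinding $Hss' = H$, and the $S^1$-versus-$S$ care needed so that each nontrivial coset is witnessed by an element of $S$ while the trivial coset $H$ is handled separately) to cause no difficulty; the one step requiring genuine care is the orbit/$\gh$-class identification of stage two, for which a clean invocation of Green's Lemma (see \cite{Howie1}) is the decisive ingredient. Once $Hx = H_x$ is in hand, the corollary is immediate from Theorem~\ref{bigtosmall}.
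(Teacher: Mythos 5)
Your proposal is correct and follows exactly the same route as the paper: reduce to Theorem~\ref{bigtosmall} by identifying the right cosets of $H$ with the $\gh$-classes contained in the $\gr$-class of $H$. The paper simply asserts this identification in one line, whereas you supply the (correct) details via Green's Lemma; no discrepancy in substance.
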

\begin{proof}
The subgroup $H$ has finite translational index since its right cosets are precisely the $\gh$-classes of $S$ contained in the $\gr$-class of $H$.
\end{proof}

\section{Regular semigroups and finite derivation type}

In this section we turn our attention to regular semigroups. For a full account of regular semigroups and the other standard semigroup theoretic concepts mentioned in this section, such as completely $0$-simple semigroups, we refer the reader to \cite{Howie1}.

A semigroup  $S$ is regular if for all $x \in S$ there exists $y \in S$ such that $xyx=x$. This is equivalent to saying that every $\gr$-class and every $\gl$-class of $S$ contains an idempotent, and hence contains a maximal subgroup. In this sense we may think of a regular semigroup  as having ``lots'' of maximal subgroups and as a consequence it is often the case that the behaviour of such semigroups is closely linked to the behaviour of their maximal subgroups. Examples of this kind of result include \cite{Ruskuc2} where finite generation and presentability are considered, and \cite{Golubov1} which is concerned with residual finiteness. Here we consider the property $\mathrm{FDT}$, and the relationship between $\mathrm{FDT}$ holding in a regular semigroup, and $\mathrm{FDT}$ holding in the maximal subgroups of the semigroup.

As a special case of Corollary~\ref{maxsubgroupFDT} we see that if a regular semigroup $S$ has finitely many left and right ideals (which is equivalent to having finitely many $\gr$- and $\gl$-classes) then $\mathrm{FDT}$ holding in $S$ implies $\mathrm{FDT}$ holding in all of the maximal subgroups of $S$. The main aim of this section is to prove that the converse of this result is also true. We shall prove the following.

\begin{thm}\label{smalltobig}
Let $S$ be a regular semigroup with finitely many left and right ideals. If every maximal subgroup of $S$ has $\mathrm{FDT}$ then $S$ has $\mathrm{FDT}$.
\end{thm}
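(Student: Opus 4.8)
The plan is to induct on the number of $\gj$-classes of $S$. This number is finite: since $S$ has finitely many left and right ideals it has finitely many $\gr$- and $\gl$-classes, hence finitely many $\gh$-classes (each being an intersection of an $\gr$- and an $\gl$-class) and finitely many $\gj$-classes. As $S$ is regular with finitely many ideals it is stable, so $\gd = \gj$, and I would use the principal series of $S$. Concretely, choose a $\gj$-maximal class $J$; then $N = S \setminus J$ is an ideal (if $x \notin J$ and $sx \in J$ then $x \geq_{\gj} sx$, forcing $x \in J$ by maximality of $J$, and dually), and the Rees quotient $S/N$ is the principal factor $J^0 = J \cup \{0\}$. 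Because $J$ is a regular $\gd$-class, $J^0$ is completely $0$-simple, so by the Rees--Suschkewitsch theorem $J^0 \cong M^0[G; \mathcal{I}, \Lambda; P]$ for some maximal subgroup $G$ of $S$, and the finiteness of the Green's structure forces the index sets $\mathcal{I}$ and $\Lambda$ to be finite.

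The building block of the argument is the statement that a completely $0$-simple semigroup $M^0[G; \mathcal{I}, \Lambda; P]$ with $\mathcal{I}$, $\Lambda$ finite has $\mathrm{FDT}$ whenever $G$ does. Here I would take a finite presentation and a finite homotopy base for $G$, adjoin finitely many generators indexing the rows and columns together with the entries of the structure matrix $P$, and write down the resulting finite Rees matrix presentation for $M^0[G; \mathcal{I}, \Lambda; P]$. I would then assemble a finite homotopy base from a finite homotopy base of $G$ together with finitely many closed paths recording the coordinatizing relations among the new generators. This is a computation of the type carried out for Rees matrix semigroups in the work cited in Section~1, and I would either invoke such a result or verify it directly. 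Note that this building block also supplies the base case of the induction, since a regular semigroup with a single $\gj$-class is completely simple, hence a Rees matrix semigroup over a maximal subgroup with finite index sets.

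For the inductive step, observe that $N$ is again a regular semigroup with finitely many left and right ideals, and strictly fewer $\gj$-classes; moreover, for elements lying in the ideal $N$ the $\gh$-classes computed in $N$ coincide with those computed in $S$, so the maximal subgroups of $N$ are exactly the group $\gh$-classes of $S$ contained in $N$. These are maximal subgroups of $S$ and so have $\mathrm{FDT}$ by hypothesis; by induction $N$ has $\mathrm{FDT}$, while $S/N \cong J^0$ has $\mathrm{FDT}$ by the building block. It remains to transfer $\mathrm{FDT}$ up the ideal extension: from finite presentations and finite homotopy bases for $N$ and for $S/N$, build a finite presentation for $S$ (generators of $N$ together with coordinatizing generators for the $\gh$-classes in $J$, and relations of $N$, the Rees matrix relations for $J$, and finitely many relations expressing those products of elements of $J$ that fall into $N$), and glue the two homotopy bases. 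I expect this gluing to be the main obstacle. Unlike finite presentability, $\mathrm{FDT}$ is not preserved under arbitrary ideal extensions, so the argument must exploit the present structure: regularity provides idempotents in every $\gr$- and $\gl$-class, and finiteness of the Green's structure guarantees that only finitely many ``fall-into-$N$'' interaction relations are needed. The crux is to show that adjoining finitely many closed paths witnessing these interaction relations turns the union of the two inherited homotopy bases into a genuine homotopy base for $S$, i.e.\ that every closed path in the derivation graph $\Gamma$ of the constructed presentation is $\sim$-equivalent to an empty path using only these finitely many generating paths.
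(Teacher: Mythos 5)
Your reduction is exactly the one the paper uses: induct on the number of $\gj$-classes (finite here, with $\gj=\gd$), remove a maximal $\gj$-class $J$, note that $N=S\setminus J$ is a regular ideal with finitely many left and right ideals, strictly fewer $\gj$-classes, and maximal subgroups that are maximal subgroups of $S$, and that the principal factor $S/N\cong M^0[G;I,\Lambda;P]$ is completely $0$-simple with finite index sets. That part is sound, including the base case (the paper folds the base case into its extension theorem by viewing a completely simple $S$ as an ideal extension of the trivial semigroup by $S^0$ and then dropping the adjoined zero; invoking the cited Rees-matrix FDT result of Malheiro instead would also be acceptable).

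The genuine gap is the step you yourself label ``the main obstacle'': transferring $\mathrm{FDT}$ up the ideal extension. This is not a routine gluing, and it is precisely the content of the paper's Theorem~\ref{comp0simpExt}, whose proof occupies essentially all of the rest of Section~4. Concretely, one needs: an explicit finite presentation $\mathcal{P}_S$ assembled from presentations of $T$ and $G$ (via \cite{HowieAndRuskuc} and \cite{Ruskuc2}), including the interaction relations $zx=\sigma(z,x)$, $xz=\tau(x,z)$ and $u=\rho(u)$; a notion of quasi-normal form $B^1A^*C^1\cup Z^+$ together with rewriting lemmas (Lemmas~\ref{dagger}--\ref{trick}); five explicitly constructed finite families of parallel paths $X_1,X_1',X_2,X_3,X_e$; a well-founded induction on the measure $F(w)=(|w|_{B\cup C},|w|_A)$ to compare pairs of positive paths with the same source (Lemmas~\ref{Lemma4} and~\ref{Lemma5}); and a final argument (Lemma~\ref{wrappingup}) reducing an arbitrary parallel pair to parallel paths lying inside $b\cdot P(\Gamma_G)\cdot c$ or $P(\Gamma_T)$, where the homotopy bases $X_G$ and $X_T$ can be applied. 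Your proposal states the desired conclusion (finitely many closed paths witnessing the interaction relations suffice) but offers no mechanism for proving it; the two structural facts you invoke --- regularity supplying idempotents, and finiteness of the Green structure bounding the number of interaction relations --- only ensure that the candidate base is \emph{finite}, not that it \emph{generates} the full parallelism relation, which is where all the difficulty of the theorem lives. As it stands, the proposal is a correct reduction resting on an unproven main lemma, and one that cannot simply be cited, since (as you note) $\mathrm{FDT}$ does not pass through arbitrary ideal extensions.
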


Along with Corollary~\ref{mainresultcorol} this proves the second main result, Theorem~\ref{main_0}, of our paper.

Our approach to the proof of Theorem~\ref{smalltobig} requires us to introduce a little more theory.  A semigroup is said to be \emph{completely ($0$-)simple} if it is ($0$-)simple and has ($0$-)minimal left and right ideals. Recall that a \emph{completely $0$-simple semigroup} $S$ is isomorphic to a \emph{$0$-Rees matrix semigroup} $M^0[G,I,\Lambda,P]$, where $G$ is a group isomorphic to any (and hence all) maximal subgroups of $S$, $I$ is a set indexed by the set of all $0$-minimal right ideals of $S$, $\Lambda$ is a set indexed by the set of all $0$-minimal left ideals of $S$, and $P = (p_{\lambda i})$ is a regular $\Lambda \times I$ matrix with entries from $G \cup \{ 0 \}$. Multiplication in $M^0[G,I,\Lambda,P] = (I \times G \times \Lambda) \cup \{ 0 \}$ is given by
\[
\begin{array}{c}
(i,g,\lambda)(j,h,\mu) =
\begin{cases}
(i, gp_{\lambda j}h, \mu) & \mbox{if $p_{\lambda j} \neq 0$} \\
0 &  \mbox{if $p_{\lambda j} = 0$} \\
\end{cases} \\
0 (i,g,\lambda) = (i,g,\lambda)0 = 00 = 0.
\end{array}
\]
There is an analogous construction for completely simple semigroups, given by taking Rees matrix semigroups $M[G,I,\Lambda,P] = I \times G \times \Lambda$ over groups, where $P$ is a matrix with entries from $G$, and multiplication is given by $(i,g,\lambda)(j,h,\mu) = (i, gp_{\lambda j}h, \mu)$. 

Given semigroups $S$, $T$ and $U$, we say that $S$ is an \emph{ideal extension} of $T$ by $U$ if $T$ is isomorphic to an ideal of $S$ and the Rees quotient $S/T$ is isomorphic to $U$.

In order to prove Theorem~\ref{smalltobig} it will be sufficient to establish the following.

\begin{thm}\label{comp0simpExt}
Let $S$ be an ideal extension of $T$ by a completely $0$-simple semigroup $U = M^0[G;I,\Lambda;P]$ with $I$ and $\Lambda$ both finite. If $T$ has $\mathrm{FDT}$ and $G$ has $\mathrm{FDT}$ then $S$ has $\mathrm{FDT}$.
\end{thm}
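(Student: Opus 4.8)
The plan is to build a convenient finite presentation for $S$ by combining presentations for the ideal $T$ and for the completely $0$-simple quotient $U$, and then to construct a finite homotopy base for this presentation out of the given finite homotopy bases for $T$ and for the group $G$. Since $I$ and $\Lambda$ are finite, $U = M^0[G;I,\Lambda;P]$ has finitely many $\gr$- and $\gl$-classes, and its maximal subgroups are all isomorphic to $G$. I would first observe that $G$ has finite translational index inside $U$ (its right cosets being the finitely many $\gh$-classes in its $\gr$-class), so by Corollary~\ref{maxsubgroupFDT} the property $\mathrm{FDT}$ transfers correctly between $G$ and $U$; more usefully, I would record that $U$ itself has a natural finite presentation obtained from a finite presentation of $G$ together with generators indexed by $I$ and $\Lambda$ and Rees-matrix relations encoding the multiplication and the structure matrix $P$. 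The genuine content is then the behaviour of $\mathrm{FDT}$ under the ideal extension $S$ of $T$ by $U$.

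First I would fix a finite presentation $\lb A_T \mid R_T \rb$ for $T$ with finite homotopy base $X_T$, and a finite presentation for $U$ with finite homotopy base $X_U$ assembled from the data above. Then I would write down an explicit finite presentation $\lb A \mid R \rb$ for the extension $S$: take $A = A_T \sqcup A_U$, keep all relations $R_T$ and $R_U$, and add \emph{linking} relations that rewrite every product of a $U$-generator with a $T$-generator (in either order) and every ``zero'' product of $U$-generators back into a word over $A_T$, using the fact that $T$ is an ideal so such products land in $T$. This is essentially the standard construction for presenting ideal extensions, and the point is that it is finite because $A_U$, $R_U$, $A_T$, $R_T$ are all finite.

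The heart of the argument, and the step I expect to be the main obstacle, is producing a finite homotopy base for $\lb A \mid R \rb$. The strategy is to show that an arbitrary closed path in $\Gamma(\lb A\mid R\rb)$ can be reduced, modulo a finite candidate set $Y$, to a closed path lying entirely over $T$ or entirely over $U$, where it is then killed by $X_T$ or $X_U$ respectively. Concretely, any word over $A$ represents either an element of the ideal $T$ or an element of $S\setminus T$; using the linking relations one pushes every edge that straddles the two parts, or that applies a $U$-rule inside a word representing a $T$-element, across. I would take $Y$ to consist of $X_T$, $X_U$, and a finite set of ``overlap'' closed paths coming from the critical interactions between the linking relations and the relations of $R_T$ and $R_U$ (the confluence-type diagrams where a $U$-generator meets a $T$-generator adjacent to a relation application). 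The delicate part is verifying that these overlap paths are finite in number and genuinely suffice: one must check every way in which an application of (H1) can mix a linking relation with an $R_T$-, $R_U$-, or another linking-relation edge, and confirm that each resulting parallel pair is generated by $Y$. Once a closed path is shown to be homotopic, modulo $Y$, to one supported over a single part, finiteness of $X_T$ and $X_U$ finishes the argument; combined with the earlier reduction this yields that $K\cup W$-style finite base exists for $S$, so $S$ has $\mathrm{FDT}$.
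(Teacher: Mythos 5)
Your high-level architecture (glue finite presentations of the pieces with linking relations, then look for a homotopy base consisting of the pieces' bases plus finitely many overlap paths) resembles the paper's strategy, but two of your steps have genuine gaps, and the first is fatal as written. You need a finite homotopy base $X_U$ for $U$, and you claim to obtain it because ``$\mathrm{FDT}$ transfers correctly between $G$ and $U$'' by Corollary~\ref{maxsubgroupFDT}. That corollary goes in the \emph{opposite} direction: it deduces $\mathrm{FDT}$ for a maximal subgroup from $\mathrm{FDT}$ of the ambient semigroup. What you need is that $G$ having $\mathrm{FDT}$ forces $U = M^0[G;I,\Lambda;P]$ to have $\mathrm{FDT}$, and that implication is precisely the special case $T = \{0\}$ of the theorem you are trying to prove (an ideal extension of the trivial semigroup $\{0\}$ by $U$ is just $U$ itself; this is exactly how the paper uses Theorem~\ref{comp0simpExt} in the base case of the induction proving Theorem~\ref{smalltobig}). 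So, as written, your argument is circular. The implication is in fact a known theorem (essentially the main result of \cite{Malheiro1}), but you do not invoke it, and the paper's proof is organised so as not to need it.

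Second, even granting a finite homotopy base $X_U$ for a finite presentation of $U$, your reduction plan --- push every closed path over $S$ to one lying ``entirely over $T$ or entirely over $U$'' and kill it with $X_T$ or $X_U$ --- founders on the zero of $U$. In a presentation of $U$ the words representing $0$ are all identified to a single element, but in $S$ those same words represent many \emph{different} elements of $T$: relations of the form $u = 0$ valid in $U$ are not relations of $S$ and must be replaced by relations $u = \rho(u)$ with $\rho(u)$ a word over the generators of $T$. Consequently the derivation graph of $U$'s presentation is not a subgraph of the derivation graph of your presentation of $S$, closed paths of the former passing through $0$ do not lift to closed paths of the latter, and ``killed by $X_U$'' has no meaning for a path in $\Gamma(\lb A \mid R \rb)$ whose vertices represent elements of $T$ but are written in $U$-generators. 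This is exactly the difficulty the paper's proof is built around: it never uses a homotopy base for $U$ at all. Instead it presents $S$ on generators $A \cup B \cup C \cup Z$, where $A$ presents the \emph{group} $G$, $b_i \mapsto (i,1,1)$ and $c_\lambda \mapsto (1,1,\lambda)$, introduces the quasi-normal forms $B^1 A^* C^1 \cup Z^+$, and proves (Lemmas~\ref{Lemma1}--\ref{newlemma}) that modulo the finite sets $X_1, X_1', X_2, X_3, X_e$ every path can be normalised either into a translate $b \cdot P(\Gamma_G) \cdot c$ of the group's derivation graph, where the homotopy base $X_G$ of $G$ applies, or into $\Gamma_T$, where $X_T$ applies. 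Your sketch would need to be rebuilt along these lines --- working over $G$ rather than $U$, with words representing elements of $T$ routed into $\Gamma_T$ --- before the ``delicate part'' you flag (finiteness and sufficiency of the overlap paths) can even be formulated.
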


Before embarking on a proof of of Theorem~\ref{comp0simpExt}, we now show how Theorem~\ref{smalltobig} may be deduced from it.

\begin{proof}[Proof of Theorem~\ref{smalltobig}]

We begin with an easy observation. Let $N$ be a finitely presented semigroup, and let $N^0$ denote $N$ with a zero element $0$ adjoined. We claim that if $N^0$ has $\mathrm{FDT}$ then $N$ has $\mathrm{FDT}$. Indeed, let $\mathcal{P}_1 = \lb A | R \rb$ be a finite  presentation for $N$. Then $\mathcal{P}_0 = \lb A, 0 | R, a0 = 0a = 00 = 0 \ (a \in A) \rb$ is a finite presentation for $N^0$. If $X$ is a finite homotopy base for $\mathcal{P}_0$ then the subset $X' \subseteq X$ defined by
$
X' = \{ \bbp \in X : \iota \bbp \in A^+ \}
$
is clearly a finite homotopy base for the presentation $\mathcal{P}_1$ of $N$. (This is really just a special case of the semigroup presentation analogue of the result proved in \cite{Pride5} stating that $\mathrm{FDT}$ is inherited by submonoids with ideal complement.)

Now we use Theorem~\ref{comp0simpExt} to prove Theorem~\ref{smalltobig}. From the assumption that $S$ has finitely many left and right ideals it follows (see for example \cite[Chapter~6]{CliffordAndPreston}) that in $S$ we have $\gj = \gd$. The proof of Theorem~\ref{comp0simpExt} now goes by induction on the number of $(\gj = \gd)$-classes of $S$. When $S$ has just one $\gj$-class it follows, since $S$ has finitely many $\gr$- and $\gl$-classes, that $S$ is isomorphic to a completely simple semigroup $M[G;I,\Lambda,P]$ with $I$ and $\Lambda$ finite, and where $G$ has $\mathrm{FDT}$ by assumption. By Theorem~\ref{comp0simpExt} this implies that $S^0$ has $\mathrm{FDT}$, since it is an ideal extension of the trivial semigroup $T = \{ 0 \}$ by the completely $0$-simple semigroup $S^0$ where $G$ has $\mathrm{FDT}$. Now by the observation given in the previous paragraph, since $S^0$ has $\mathrm{FDT}$ it follows that $S$ has $\mathrm{FDT}$.

Now suppose that $S$ has at least two $\gj$-classes. Let $J_M$ be a maximal $\gj$-class (in the natural ordering of $\gj$-classes $J_x \leq_{\gj} J_y \Leftrightarrow S^1 x S^1 \subseteq S^1 y S^1$). Then $T = S \setminus J_M$ is an ideal of $S$ where $T$ is regular and, since $T$ is a union of $(\gj = \gd)$-classes of $S$ each of which is regular and contains only finitely many $\gr$- and $\gl$-classes, it follows that  
$T$ has strictly fewer $(\gj = \gd)$-classes than $S$. So $T$ is a regular semigroup with finitely many left and right ideals, and every maximal subgroup of $T$ is a maximal subgroup of $S$ and thus has $\mathrm{FDT}$ by assumption. Hence by induction $T$ has $\mathrm{FDT}$. But now $S$ is an ideal extension of $T$ by the Rees quotient $S / T \cong M^0[G;I,\Lambda;P]$ where $I$ and $\Lambda$ are finite (since $S$ has finitely many left and right ideals), and $G$  has $\mathrm{FDT}$. Applying Theorem~\ref{comp0simpExt} we conclude that $S$ has $\mathrm{FDT}$.
\end{proof}

The rest of this section will be devoted to the proof of Theorem~\ref{comp0simpExt}.

\

\noindent \textbf{Proof of Theorem~\ref{comp0simpExt}.} Let $S$ be an ideal extension of a semigroup $T$ by the completely $0$-simple semigroup $U = M^0[G;I,\Lambda;P]$. So the semigroup $S$ decomposes as the disjoint union $S = T \cup (I \times G \times \Lambda)$. Without loss of generality we may suppose that $1 \in I$, $1 \in \Lambda$ and that $p_{1 1} \neq 0$ so that $\{ 1 \} \times G \times \{ 1 \}$ is a group $\gh$-class. Let $\mathcal{P}_T = \lb Z | Q \rb$ be a semigroup presentation for $T$, and let $\mathcal{P}_G = \lb A | R \rb$ be a semigroup presentation for the group $G$. Our first task is to write down a presentation for $S$ with respect to which we shall define our homotopy base. We obtain a presentation for $S$ by applying the results \cite[Theorem~6.2]{HowieAndRuskuc} and \cite[Proposition~4.4]{Ruskuc2} in the following way.

We start by fixing some notation. Let $e \in A^+$ is a fixed word representing the identity element of $G$. Let $B = \{ b_i : i \in I \setminus \{ 1 \} \}$ and $C = \{ c_{\lambda} : \lambda \in \Lambda \setminus \{ 1 \}  \}$. Ultimately in the presentation for $S$ we give below, $A \cup B \cup C$ will be a subset of the generators, where $b_i$ represents the element $(i,1,1)$, $c_{\lambda}$ represents the element $(1,1,\lambda)$, and $a \in A$ represents the element $(1,g_a,1)$ where $g_a \in G$ is the element of $G \cong \lb A  | R \rb$ represented by $a \in A$. Given this, for every word $u \in (A \cup B \cup C)^+$ representing the zero of $U = M^0[G; I, \Lambda;P]$ fix a word $\rho(u) \in Z^+$ such that the relation $u = \rho(u)$ holds in $S$. Similarly for every pair of letters $x \in A \cup B \cup C$ and $z \in Z$ fix words $\sigma(z,x), \tau(x,z) \in Z^+$ such that the relations $zx = \sigma(z,x)$ and $xz = \tau(x,z)$ hold in $S$.

Now, from \cite[Theorem~6.2]{HowieAndRuskuc} we can use the presentation $\lb A | R \rb$ and the matrix $P$ to obtain a presentation for $U = M^0[G;I,\Lambda;P]$. In \cite[Theorem~6.2]{HowieAndRuskuc} this is given as a presentation of a semigroup \emph{with zero} which we must convert into a genuine semigroup presentation by adding a generator $0$ and relations $0x=x0=00=0$ for all other generators. As well as this, we also add the following additional (redundant) relations:
\[
\begin{array}{llll}
a b_i = a p_{1 i} & (a \in A, \ i \in I), & c_{\lambda} a = p_{\lambda 1} a & (a \in A, \ \lambda \in \Lambda), \\
c_{\lambda} c_{\mu} = p_{\lambda 1} c_{\mu} & (\mu, \lambda \in \Lambda), & b_i b_j = b_i p_{1 j} & (i,j \in I).
\end{array}
\]
where the symbols $p_{\lambda i}$ appearing in the relations are really fixed words from $A^+$ representing the elements $p_{\lambda i }$ of $G$. These relations are all easily seen to be consequences of the relations appearing in the presentation of \cite[Theorem~6.2]{HowieAndRuskuc} and hence the resulting presentation also defines $U = M^0[G;I,\Lambda;P]$. Then taking this presentation for $U = M^0[G;I,\Lambda;P]$ together with the presentation $\lb Z | Q \rb$ of $T$, and applying \cite[Proposition~4.4]{Ruskuc2} we obtain the presentation for $S$ given below.
\[
\begin{array}{cc}
\begin{array}{rcl}
\mathcal{P}_S = \langle \ A, \ B, \ C, \ Z \ | \ R, \ Q, & &    \\ 
b_i e = b_i, \ ec_{\lambda} = c_{\lambda}, & &  \\
zx = \sigma(z,x), \ xz = \tau(x,z) & & (x \in A \cup B \cup C, z \in Z)  \\
eb_i = p_{1 i}, \ ab_i = ap_{1 i} & & (a \in A, i \in I: p_{1 i} \neq 0)    \\
c_{\lambda} e = p_{\lambda 1}, \ c_{\lambda} a = p_{\lambda 1} a & & (a \in A, \lambda \in \Lambda: p_{\lambda 1} \neq 0) \\
c_{\lambda} b_i = p_{\lambda i} & & (i \in I, \lambda \in \Lambda : p_{\lambda i} \neq 0) \\
c_{\lambda} c_{\mu} = p_{\lambda 1} c_{\mu} & & (\mu, \lambda \in \Lambda : p_{\lambda 1} \neq 0) \\
b_i b_j = b_i p_{1 j} & & (i, j \in I: p_{1 j} \neq 0) \\
e b_i = \rho(eb_i), \ ab_i = \rho(ab_i) & & (a \in A, i \in I: p_{1 i} = 0) \\
c_{\lambda} e = \rho(c_{\lambda} e), \ c_{\lambda} a = \rho(c_{\lambda} a) & & (a \in A, \lambda \in \Lambda: p_{\lambda 1} = 0) \\
c_{\lambda} b_i = \rho(c_{\lambda} b_i) & & (i \in I, \lambda \in \Lambda: p_{\lambda i}=0) \\
c_{\lambda} c_{\mu} = \rho(p_{\lambda 1} c_{\mu}) & & (\mu, \lambda \in \Lambda: p_{\lambda 1} = 0) \\
b_i b_j = \rho(b_i p_{i j}) & & (i, j \in I: p_{i j}=0) \ \rangle
\end{array} & 
\begin{array}{c}
\left. \begin{array}{l}
(4.1) \\
(4.2) \ \} \ R_e \\
(4.3) \ \} \ R_0
\end{array} \right. \\
\left. \begin{array}{r}
(4.4) \\
(4.5) \\
(4.6) \\
(4.7) \\
(4.8) 
\end{array} \right\} R_U \\
\left. \begin{array}{r}
(4.9) \\
(4.10) \\
(4.11) \\
(4.12) \\
(4.13) 
\end{array} \right\} R_T
\end{array}
\end{array}
\]
We group the relations of this presentation together into the following sets
\[
\begin{array}{c}
    \begin{array}{cc}
    R_e  =  (4.2), & R_0  =  (4.3)
    \end{array} \\
R_U =  (4.4) \cup (4.5) \cup (4.6) \cup (4.7) \cup (4.8) \\
R_T =  (4.9) \cup (4.10) \cup (4.11) \cup (4.12) \cup (4.13),
\end{array}
\]
so that
\[
\mathcal{P}_S = \langle \ A, \ B, \ C, \ Z \ |  \ R, \ R_e, \ R_0, \ R_U, \ R_T, \ Q \rangle.
\]
Let $\Gamma_G = \Gamma(\mathcal{P}_G)$, $\Gamma_T = \Gamma(\mathcal{P}_T)$ and $\Gamma_S = \Gamma(\mathcal{P}_S)$. In the natural way we view $\Gamma_G$ and $\Gamma_T$ as subgraphs of $\Gamma_S$. Let $\Gamma$ be the subgraph of $\Gamma_S$ with the same vertex set, but which only contains those edges $(w_1, r, \epsilon, w_2)$ with $r \in R_0 \cup R_U \cup R_T$. Recall that we use $P_+(\Gamma)$ to denote the set of all positive paths in $\Gamma$ (i.e. those paths that are either empty or have the property that $\epsilon = +1$ for every edge of the path). Let $\Gamma_U$ (respectively $\Gamma_0$) denote the subgraph of $\Gamma_S$ with the same vertex set as $\Gamma_S$, but which contains only those edges of the form $(w_1, r, \epsilon, w_2)$ where $r \in R_U$ (respectively $r \in R_0$).

Before we begin the process of constructing a homotopy base for $S$, we must prove some lemmas about the presentation $\mathcal{P}_S$. We write $B^1 A^* C^1$ to denote the set of all non-empty words from the set
\[
\{ bwc : b \in B \cup \{ 1 \}, \ c \in C \cup \{ 1 \} \ \& \ w \in A^*  \}.
\]
Likewise we use the notation $B^1 A^*$ and $A^* C^1$. We shall refer to the set of words $B^1 A^* C^1 \cup Z^+$ as the set of \emph{quasi-normal forms} for the presentation $\mathcal{P}_S$. 

\begin{lem}\label{dagger}
\ \

\begin{enumerate}[(i)]
\item The word $b_i w c_\lambda$ with $i \in I \setminus \{ 1 \}$, $\lambda \in \Lambda \setminus \{ 1 \}$ and $w \in A^*$ represents the element $(i,g,\lambda)$ of $S$, where $g \in G$ is the element represented by $w$ if $w \in A^+$, or $g=1$ if $w$ is the empty word.
\item The word $b_i w$ with $i \in I \setminus \{ 1 \}$ and $w \in A^*$ represents the element $(i,g,1)$ of $S$, where $g \in G$ is the element represented by $w$ if $w \in A^+$, or $g=1$ if $w$ is the empty word.
\item The word $w c_\lambda$ with $\lambda \in \Lambda \setminus \{ 1 \}$ and $w \in A^*$ represents the element $(1,g,\lambda)$ of $S$, where $g \in G$ is the element represented by $w$ if $w \in A^+$, or $g=1$ if $w$ is the empty word.
\item The word $w \in A^+$ represents the element $(1,g,1)$ where $g \in G$ is the element represented by the word $w$. 
\end{enumerate} 
\end{lem}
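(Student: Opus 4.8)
\textbf{Proof plan for Lemma~\ref{dagger}.}

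The plan is to verify the four assertions by direct computation inside the semigroup $S$, using only the known embedding of $U = M^0[G;I,\Lambda;P]$ as a Rees quotient and the explicit meanings assigned to the generators $A \cup B \cup C$. Recall that $b_i$ represents $(i,1,1)$, $c_\lambda$ represents $(1,1,\lambda)$, and each $a \in A$ represents $(1,g_a,1)$ where $g_a$ is the element of $G$ that $a$ names. Since by our normalisation $p_{11} \neq 0$ and $\{1\}\times G\times\{1\}$ is a group $\gh$-class with identity $(1,1,1)$, the element $e \in A^+$ representing the identity of $G$ represents $(1,1,1)$ in $S$. The key observation that drives everything is that in the Rees matrix semigroup, a word in $A$ representing $g \in G$ represents the element $(1,g,1)$; this is exactly assertion~(iv), so I would establish (iv) first as the base case.

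For (iv), I would argue by induction on $|w|$ for $w \in A^+$. The length-one case is the defining meaning of the generators $a$. For the inductive step, writing $w \equiv w'a$ with $w'$ representing $(1,h,1)$ and $a$ representing $(1,g_a,1)$, I compute the product $(1,h,1)(1,g_a,1) = (1,h\,p_{11}\,g_a,1)$ using the Rees multiplication rule, and then use that in the presentation $\mathcal{P}_G = \lb A|R\rb$ for $G$ the matrix-entry factor $p_{11}$ is absorbed correctly into the group multiplication so that the product represents $(1,hg_a,1)$. (This is precisely the content of how $\lb A|R\rb$ together with $P$ presents $U$ via \cite[Theorem~6.2]{HowieAndRuskuc}; more concretely one checks $hg_a$ is the element of $G$ named by $w$.) Thus $w$ represents $(1,g,1)$ where $g$ is named by $w$, completing (iv).

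For (ii), the word $b_i w$ represents $(i,1,1)$ times the element represented by $w$. If $w \in A^+$ names $g \in G$, then by (iv) this is $(i,1,1)(1,g,1) = (i,\,1\cdot p_{11}\cdot g,\,1) = (i,g,1)$ after the same absorption of $p_{11}$ into $G$; if $w$ is empty the product is just $(i,1,1)$, giving $g=1$. Assertion (iii) is entirely dual: $w c_\lambda$ represents the element named by $w$ times $(1,1,\lambda)$, yielding $(1,g,\lambda)$ by the symmetric computation. Finally (i) follows by combining (ii) and (iii): $b_i w c_\lambda$ represents $(i,1,1)$ times the element of (iii), or equivalently the element of (ii) times $(1,1,\lambda)$, and in either case the Rees rule with the $p_{11}=1$-normalised factors gives $(i,g,\lambda)$.

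The main obstacle I anticipate is bookkeeping the matrix entries $p_{\lambda i}$ correctly so that they genuinely cancel into the group coordinate rather than obstructing the computation. This is exactly why the normalisation $1\in I$, $1\in\Lambda$, $p_{11}\neq 0$ was imposed at the start of the proof of Theorem~\ref{comp0simpExt}; with $p_{11}$ acting as the relevant identity of $G$ (after the usual normalisation of the Rees matrix so that $p_{11}=1$ in $G$), the sandwich factors appearing in the products above all reduce to the identity and the coordinates combine cleanly. Beyond this, each step is a routine application of the multiplication rule in $M^0[G;I,\Lambda;P]$ together with (iv), so the content of the lemma is really just the explicit identification of the quasi-normal forms $B^1A^*C^1$ with the non-zero elements of $U$.
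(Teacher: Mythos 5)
Your proposal is correct and takes essentially the same route as the paper, which disposes of this lemma in a single sentence by citing the construction of $\mathcal{P}_S$ together with the proof of \cite[Theorem~6.2]{HowieAndRuskuc}; your induction on $|w|$ and the Rees-matrix multiplications are precisely what that citation packages. Your observation that the sandwich matrix must be normalised so that $p_{11}=1$ (not merely $p_{11}\neq 0$, which is all the paper states explicitly) is the one genuine subtlety in the computation, and you identify and handle it correctly.
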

\begin{proof}
This follows from the construction of the presentation $\mathcal{P}_S$, together with the proof of \cite[Theorem~6.2]{HowieAndRuskuc}. 
\end{proof}

Given $b \in B^1$ and $c \in C^1$ we write $b \cdot P(\Gamma_G) \cdot c$ to denote the set of all paths $b \cdot \bbp \cdot c$ where $\bbp \in P(\Gamma_G)$, and we write 
$
B^1 \cdot P(\Gamma_G) \cdot C^1 = \bigcup_{b \in B^1, c \in C^1}{ b \cdot P(\Gamma_G) \cdot c}.
$
Since the vertices of the graph $\Gamma_G$ are the (non-empty) words $A^+$ it follows that the set of vertices appearing in paths from $B^1 \cdot P(\Gamma_G) \cdot C^1$ is $B^1 A^+ C^1$. So there are words in $B^1 A^* C^1$ that are \emph{not} vertices of paths in $B^1 \cdot P(\Gamma_G) \cdot C^1$, namely the words $B \cup C \cup BC$. We shall often be interested in paths with vertices from $B^1 A^* C^1$ and edges of the form $(w_1, r, \epsilon, w_2)$ where $r \in R \cup R_e$. So, given $b \in B^1$ and $c \in C^1$ we write $\overline{b \cdot P(\Gamma_G) \cdot c}$ to denote the set of all paths whose vertices all belong to $b A^* c \cup \{ bc \}$ (where $\{bc\} = \{b\}$ when $c=1$, and $\{bc\} = \{c\}$ when $b=1$) and whose edges are all either empty or of the form $(w_1, r, \epsilon, w_2)$ where $r \in R \cup R_e$. Clearly $b \cdot P(\Gamma_G) \cdot c \subseteq \overline{b \cdot P(\Gamma_G) \cdot c}$ for all $b \in B^1$, $c \in C^1$. We also write 
$
\overline{B^1 \cdot P(\Gamma_G) \cdot C^1} = \bigcup_{b \in B^1, c \in C^1}{ \overline{b \cdot P(\Gamma_G) \cdot c}}
$
noting that the set of vertices arising in paths of $\overline{B^1 \cdot P(\Gamma_G) \cdot C^1}$ is $B^1 A^* C^1$. 

The next lemma shows how using only positive relations from the set $R_0 \cup R_U \cup R_T$ we can transform an arbitrary word into a word in quasi-normal form.
\begin{lem}\label{Lemma1}
Let $w \in (A \cup B \cup C \cup Z)^+$ be arbitrary.
\begin{enumerate}[(i)]
\item If $w$ represents an element of $S \setminus T$ then there is a path $\bbp \in P_+(\Gamma)$ from $w$ to some $w' \in B^1 A^* C^1$. 
\item If $w$ represents an element of $T$ then there is a path $\bbp \in P_+(\Gamma)$ from $w$ to some $w' \in Z^+$. Moreover, if $w$ contains a letter from $Z$ then $\bbp$ may be chosen with $\bbp \in P_+(\Gamma_0)$. 
\end{enumerate}
\end{lem}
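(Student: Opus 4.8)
The plan is to exhibit an explicit positive rewriting procedure which drives an arbitrary word towards a quasi-normal form using only edges labelled by relations of $R_0 \cup R_U \cup R_T$. Two mechanisms are involved: absorbing letters into the $Z$-part via $R_0$, and eliminating superfluous letters of $B$ and $C$ via $R_U$ and $R_T$. The whole argument is an induction, and the delicate point is selecting the right well-founded measure.

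First I would dispose of the case where $w$ already contains a letter of $Z$. Since $T$ is an ideal, such a $w$ represents an element of $T$, so only statement (ii) is relevant. I would prove, by induction on the number of letters of $w$ lying outside $Z$, that there is a positive path in $\Gamma_0$ from $w$ to a word of $Z^+$: whenever $w$ contains both a letter of $Z$ and a letter outside $Z$, being a linear word it has a factor $zx$ or $xz$ with $z \in Z$ and $x \in A \cup B \cup C$, and one of the relations of $R_0$ replaces this two-letter factor by a word of $Z^+$, strictly decreasing the number of non-$Z$ letters while retaining at least one letter of $Z$. This also settles the \emph{Moreover} clause, since the path produced lies in $P_+(\Gamma_0)$.

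For the remaining case $w \in (A \cup B \cup C)^+$, call a two-letter factor of $w$ \emph{bad} if it has the shape $xb_i$ or $c_\lambda y$ with $x,y \in A \cup B \cup C$. A direct inspection shows that $w$ has no bad factor if and only if $w \in B^1 A^* C^1$, and that every bad factor is the left-hand side of exactly one relation of $R_U \cup R_T$, the choice between $R_U$ and $R_T$ being dictated solely by whether the relevant entry of $P$ is nonzero or zero. I would then induct on $\beta(w) + \gamma(w)$, where $\beta(w)$ and $\gamma(w)$ count the letters of $w$ from $B$ and from $C$ respectively. If $w$ has no bad factor then $w \in B^1 A^* C^1$, which by Lemma~\ref{dagger} represents an element of $S \setminus T$, so the empty path proves (i). Otherwise apply the unique relation available at the leftmost bad factor. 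A step using $R_U$ keeps the word in $(A \cup B \cup C)^+$, preserves the represented element, and strictly decreases $\beta + \gamma$ (its right-hand side involves strictly fewer letters of $B \cup C$, the matrix words $p_{\lambda i}$ lying in $A^+$); I then apply the induction hypothesis to the new word and prepend the edge. A step using $R_T$ instead rewrites the bad factor to a word of $Z^+$, producing a word containing a letter of $Z$; this word, hence $w$, represents an element of $T$, and the first part of the argument supplies a positive path from it to $Z^+$, to which the $R_T$-edge is prepended to give a path in $P_+(\Gamma)$.

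Tracking the represented element throughout makes the two conclusions fit together automatically: each relation preserves the element of $S$, a word containing a letter of $Z$ always represents an element of $T$, and a word of $B^1 A^* C^1$ always represents an element of $S \setminus T$ by Lemma~\ref{dagger}. Consequently the reduction of a word representing $S \setminus T$ can never fire an $R_T$-relation (that would yield a $Z$-letter, forcing the element into $T$) and must terminate in $B^1 A^* C^1$, whereas a word of $(A \cup B \cup C)^+$ representing $T$ cannot terminate in $B^1 A^* C^1$ and so must eventually fire an $R_T$-relation. The main point to get right is the measure: naive induction on word length fails because the right-hand sides $p_{\lambda i}$ can be long words of $A^+$, whereas $\beta + \gamma$ decreases at every $R_U$-step precisely because those right-hand sides contain no letters of $B \cup C$. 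Verifying this decrease for each of the relations (4.4)--(4.8), and checking that the $R_0$-absorption always has an adjacent $Z$/non-$Z$ pair on which to act, are the only routine calculations required.
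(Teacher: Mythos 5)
Your proof is correct, and it rests on the same mechanism as the paper's: rewriting along positive edges of $\Gamma$, with termination controlled by counting occurrences of letters from $B \cup C$. But your organization is genuinely different, and in fact tighter. The paper runs a single simultaneous induction on $|w|_{B \cup C}$, asserting that from any word outside $B^1 A^* C^1 \cup Z^+$ there is a positive edge of $\Gamma$ whose terminal vertex has strictly smaller $|\cdot|_{B \cup C}$; taken literally this fails, e.g.\ for $w \equiv b_i a z$ (with $a \in A$, $z \in Z$) the only positive edge of $\Gamma$ available is the $R_0$-edge at the factor $az$, which leaves the count at $1$. Your two-phase structure sidesteps exactly this point: words containing a letter of $Z$ are dispatched first, by induction on the number of non-$Z$ letters using only $R_0$-edges (which simultaneously yields the \emph{Moreover} clause), and the induction on $\beta + \gamma = |w|_{B \cup C}$ is then applied only to words over $A \cup B \cup C$, where every available step is an $R_U$- or $R_T$-step and the measure genuinely decreases; your bad-factor characterization of $B^1 A^* C^1$ and the element-tracking via Lemma~\ref{dagger} (words of $B^1 A^* C^1$ represent elements of $S \setminus T$, words containing a $Z$-letter represent elements of $T$) correctly route each starting word to the appropriate normal form. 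The paper's imprecision is harmless---its later function $F$ of Lemma~\ref{FProp}, the lexicographically ordered pair $(|w|_{B \cup C}, |w|_A)$, does decrease along every positive edge of $\Gamma$ and would repair the single induction---but your argument needs no such repair.
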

\begin{proof}
We prove (i) and (ii) simultaneously. The proof is by induction on the total number $|w|_{B \cup C}$ of letters in the word that come from the set $B \cup C$. If $|w|_{B \cup C}=0$ then either $w$ contains a letter from $Z$, in which case there is an obvious path in $P_+(\Gamma_0)$ from $w$ to a word $w' \in Z^+$, or $w \in A^+$, so  $w$ represents an element of $G$ and is already written in the required form, and we are done by setting $w' \equiv w$ and $\bbp$ to be the empty path.

Now suppose that $|w|_{B \cup C} > 0$. If $w \in B^1 A^* C^1 \cup Z^+$ then we are done by setting $w' \equiv w$, $\bbp$ to be the empty path, and using the fact that the words in $B^1 A^* C^1$ all represent elements of $S \setminus T$. Now suppose that $w \not\in B^1 A^* C^1 \cup Z^+$. Then by inspection of the relations in the presentation $\mathcal{P}_S$ it follows there exists a positive edge $\bbe \in P_+(\Gamma)$ with $\iota \bbe \equiv w$. Now the result follows by induction since by considering the relations $R_0 \cup R_U \cup R_T$ we see that with $v \equiv \tau \bbe$ we have $|v|_{B \cup C} < |w|_{B \cup C}$.

The last part of (ii) is an obvious consequence of the definition of $\Gamma_0$.
\end{proof}

The following technical lemma gives some information about what happens if we try to rewrite a word into quasi-normal form by first splitting the word into three parts, and then rewriting each of the parts in turn. 

\begin{lem}\label{trick}
Let $w_1, w_3 \in B^1 A^* C^1 \cup \{ 1 \}$ and let $w_2 \in (A \cup B \cup C \cup Z)^+$ be arbitrary. Write $w_1 \equiv w_1' \alpha$ and $w_3 \equiv \beta w_3'$ where $\alpha, \beta \in A \cup B \cup C \cup \{1 \}$, and where $\alpha$ (resp. $\beta$) is empty only if $w_1$ (resp. $w_3$) is empty. If $\alpha w_2 \beta$ represents an element in $S \setminus T$ and $u =\alpha w_2 \beta$ holds in $S$ with $u \in B^1 A^* C^1$, then
\[
w_1' u w_3' \in B^1 A^* C^1.
\]
In particular, $w_1 w_2 w_3 = w_1' u w_3'$ represents an element of $S \setminus T$. 
\end{lem}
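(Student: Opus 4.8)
The plan is to separate the purely \emph{syntactic} content of the statement (that $w_1' u w_3'$ is a word of the shape $B^1 A^* C^1$) from the \emph{semantic} content (how the row and column indices of the element represented by $u$ constrain the boundary letters $\alpha$ and $\beta$). First I would record two elementary observations about the splittings $w_1 \equiv w_1'\alpha$ and $w_3 \equiv \beta w_3'$. Since any word in $B^1 A^* C^1$ carries at most one letter of $C$, occurring (if at all) as its \emph{final} letter, deleting the last letter of $w_1$ removes that $C$; hence $w_1' \in B^1 A^*$ contains no letter of $C$. Dually, deleting the first letter of $w_3$ removes its unique, leading $B$-letter, so $w_3' \in A^* C^1$ contains no letter of $B$. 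These two facts are exactly what prevent a stray $B$ or $C$ from landing in the interior of the concatenation.

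Next I would pin down the indices. Writing the element represented by $u \in B^1 A^* C^1$ as $(i_u, g_u, \lambda_u) \in S \setminus T$ via Lemma~\ref{dagger}, the word $u$ begins with a letter of $B$ precisely when $i_u \neq 1$ and ends with a letter of $C$ precisely when $\lambda_u \neq 1$. The crucial claim is: \emph{if $w_1$ is non-empty then $i_u$ equals the row index of the (nonzero) element of $U$ represented by $\alpha$, and if $w_3$ is non-empty then $\lambda_u$ equals the column index of the element represented by $\beta$.} For the first, note that $\alpha \in A \cup B \cup C$ represents a nonzero element of $U$; since $T$ is an ideal of $S$ and $u = \alpha\,(w_2\beta)$ lies in $S \setminus T$, the factor $w_2\beta$ must also represent a nonzero element of $U$ (otherwise the product would be swallowed into $T$); and then the Rees-matrix product $(i,g,\nu)(j,h,\mu) = (i, g p_{\nu j} h, \mu)$ in $M^0[G;I,\Lambda;P]$ shows that the nonzero product $u$ inherits its row index from the left factor $\alpha$. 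The statement for $\beta$ and $\lambda_u$ is entirely dual, writing $u = (\alpha w_2)\,\beta$.

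With these in hand I would deduce the two governing implications. Suppose $i_u \neq 1$, so $u$ starts with a $B$-letter. If $w_1$ is empty then $w_1' = 1$; otherwise $\alpha$ has row index $i_u \neq 1$, which forces $\alpha$ to be a letter of $B$, and since a word of $B^1 A^* C^1$ has at most one $B$-letter and only in first position, $w_1$ must equal the single letter $\alpha$, again giving $w_1' = 1$. Dually, $\lambda_u \neq 1$ forces $w_3' = 1$. A short case analysis on whether $i_u = 1$ and whether $\lambda_u = 1$ then finishes the syntactic claim: when $i_u = 1$ the word $u$ has no leading $B$, so the only possible $B$-letter of $w_1' u w_3'$ is the leading one of $w_1' \in B^1 A^*$; when $i_u \neq 1$ we have $w_1' = 1$ and the leading $B$ of $u$ is the only one. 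Either way there is at most one $B$-letter, sitting at the very front, and symmetrically at most one $C$-letter at the very end, with only letters of $A$ in between; hence $w_1' u w_3' \in B^1 A^* C^1$.

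For the final clause I would simply substitute: from $w_1 \equiv w_1'\alpha$, $w_3 \equiv \beta w_3'$ and the relation $u = \alpha w_2 \beta$ in $S$ we get $w_1 w_2 w_3 = w_1'(\alpha w_2 \beta) w_3' = w_1' u w_3'$, and since $w_1' u w_3' \in B^1 A^* C^1$, Lemma~\ref{dagger} guarantees it represents an element of $S \setminus T$. The one genuinely delicate point is the index claim in the second paragraph: one must invoke the ideal property of $T$ to rule out $w_2\beta$ (respectively $\alpha w_2$) representing an element of $T$ before appealing to the Rees-matrix formula, and the boundary cases where $\alpha$ or $\beta$ is empty must be treated separately, where they force $w_1'$ or $w_3'$ to vanish outright.
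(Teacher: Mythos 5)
Your proof is correct and takes essentially the same approach as the paper's: both rest on Lemma~\ref{dagger} together with the observation that a nonzero product in the Rees matrix semigroup inherits its row (resp.\ column) index from its leftmost (resp.\ rightmost) factor, with the ideal property of $T$ ruling out factors falling into $T$. The only difference is organizational: the paper cases on whether $w_1'$ and $w_3'$ are empty (a non-empty $w_1'$ forces $\alpha \in A \cup C$, hence row index $1$ for $u$), whereas you argue the contrapositive, casing on the indices $i_u, \lambda_u$ of $u$.
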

\begin{proof}
There are several cases to consider.
\begin{case}
$w_1'$ and $w_3'$ are both empty. 
\end{case}
In this case $w_1' u w_3' \equiv u \in B^1 A^* C^1$ by assumption. 

\begin{case}
Neither $w_1'$ nor $w_3'$ is empty.
\end{case}
In this case it follows that $\alpha \in C \cup A$ and $\beta \in B \cup A$, since $w_1, w_3 \in B^1 A^* C^1$ where $w_1 \equiv w_1' \alpha$ and $w_3 \equiv \beta w_3'$. Now consider the element of the semigroup $S$ represented by the word $\alpha w_2 \beta$. By assumption  $\alpha w_2 \beta$ represents an element in $S \setminus T$. Moreover since $\alpha \in C \cup A$ it follows from Lemma~\ref{dagger} that the element that $\alpha$ represents is a triple $(i, g, \lambda) \in I \times G \times \Lambda$ with $i=1$. Similarly the letter $\beta$ represents a triple $(i,g,\lambda)$ with $\lambda = 1$. Therefore, since by assumption $\alpha w_2 \beta$ represents an element of $S \setminus T$ it follows from the definition of multiplication in a $0$-Rees matrix semigroup that it represents a triple of the form $(1, g, 1)$ for some $g \in G$. So by Lemma~\ref{dagger} since $u \in B^1 A^* C^1$ and $u$ also represents the element $(1,g,1)$ we conclude that in fact $u \in A^+$. In conclusion we have $w_1' \in B^1 A^*$, $u \in A^+$ and $w_3' \in A^* C^1$, and hence $ w_1' u w_3' \in B^1 A^* C^1$, as required. 

\begin{case}
$w_1'$ is non-empty and $w_3'$ is empty (and the dual of this case).  
\end{case}
We argue in a similar way to the previous case. Since $w_1'$ is non-empty we have $\alpha \in A \cup C$, and since $w_3'$ is empty it follows that $|w_3|\leq 1$. Now consider the word $\alpha w_2 \beta$. Since $\alpha \in A \cup C$ it follows that $\alpha$ represents an element of the form $(1,g,\lambda)$ and hence, by the definition of multiplication in a $0$-Rees matrix semigroup, since $\alpha w_2 \beta$ represents an element of $S \setminus T$, it must represent an element $(1,h,\mu)$ for some $h \in G$, $\mu \in \Lambda$. Therefore since $u \in B^1 A^* C^1$ with $u = \alpha w_2 \beta$ in $S$, it follows from Lemma~\ref{dagger} that $u \in A^* C^1$. Since $w_1'$ being non-empty implies that $\alpha$ is non-empty we see that $w_1' \in B^1 A^*$. Now since $w_3'$ is empty we have $w_1' u w_3' \equiv w_1' u \in B^1 A^* C^1$ as required.

The fact that $w_1 w_2 w_3$ represents an element of $S \setminus T$ follows from Lemma~\ref{dagger}.
\end{proof}

\subsection*{\boldmath A homotopy base for $\mathcal{P}_S$}

Now we begin the process of building a homotopy base for $\gp_S$. Given a homotopy base $X_G$ for the presentation $\mathcal{P}_G = \lb A | R \rb$, and a homotopy base $X_T$ for the presentation $\mathcal{P}_T = \lb Z | Q \rb$, we shall construct a homotopy base $X = X_1 \cup X_1' \cup X_2 \cup X_3 \cup X_e \cup X_G \cup X_T$ for the presentation $\mathcal{P}_S$ where $X_1$, $X_1'$, $X_2$, $X_3$ and $X_e$ are defined below. The proof that $X$ is a homotopy base will be given in Lemma~\ref{wrappingup}. In particular, when $X_G$ and $X_T$ are finite, the presentations $\mathcal{P}_G$ and $\mathcal{P}_T$ are finite, and the sets $I$ and $\Lambda$ are finite, then $P_S$ is a finite presentation, and it will follow from the definitions that $X$ is a finite homotopy base for the finite presentation $P_S$, and that will complete the proof of Theorem~\ref{comp0simpExt}.

We shall build a homotopy base $X$ for $\gp_S$ in stages. Our approach is inspired, in part, by methods used by Wang in \cite{Wang1}. 

\

\noindent \textbf{The parallel paths $\mathbf X_1$ and $\mathbf X_1'$.} For every pair $\beta_1, \beta_2 \in Z^+$ such that $\beta_1 = \beta_2$ in $T$, let $\mathbb{P}_{\beta_1,\beta_2}$ be a fixed path in $\Gamma_T$ from $\beta_1$ to $\beta_2$, where these paths are chosen in such a way that $\bbp_{\beta_2, \beta_1} = \bbp_{\beta_1, \beta_2}^{-1}$, for all $\beta_1, \beta_2 \in Z^+$. Let $X_1$ be the set of parallel paths of $\Gamma_S$ of the form:
\begin{align*}
( \ (x, l=r, +1, 1),
(1, xz = \tau(x,z), +1, l')
\quad \quad \quad \quad \quad \quad \quad \quad \quad \quad \quad \quad \quad \quad
\\
\quad \quad \quad \quad \quad \quad \quad \quad \quad \quad \quad \quad \quad \quad
(\bbp_{\tau(x,z)l', \tau(x,z')r'})
(1, xz' = \tau(x,z'), -1, r') \ )
\end{align*}
where $x \in A \cup B \cup C$, $(l,r) \in Q$, and $l \equiv zl'$, $r \equiv z'r'$ where $z,z' \in Z$. Let $X_1'$ be the set of parallel paths of $\Gamma_S$ of the form:
\begin{align*}
( \ (1, l=r, +1, x),
(l', zx = \sigma(z,x), +1, 1)
\quad \quad \quad \quad \quad \quad \quad \quad \quad \quad \quad \quad \quad \quad
\\
\quad \quad \quad \quad \quad \quad \quad \quad \quad \quad \quad \quad \quad \quad
(\bbp_{l' \sigma(z,x), r'\sigma(z',x)})
(r', z'x = \sigma(z',x), -1, 1) \ )
\end{align*}
where $x \in A \cup B \cup C$, $(l,r) \in Q$, and $l \equiv l'z$, $r \equiv r'z'$ where $z,z' \in Z$. Note that $X_1$ and $X_1'$ are both finite when $Q$, $A$, $B$, and $C$ are all finite.

\begin{lem}\label{Last}
Let $\bbp$ be a non-empty path in $\Gamma_T$ and let $u,v \in (A \cup B \cup C \cup Z)^*$ such that $uv$ is non-empty. Then there exist paths $\bbq_1, \bbq_2 \in P_+(\Gamma_0)$, words $u', v' \in (A \cup B \cup C \cup Z)^*$, and a path $\bbp'$ in $\Gamma_T$ such that
\[
u \cdot \bbp \cdot v \sim_{X_1 \cup X_1'} \bbq_1 (u' \cdot \bbp' \cdot v') \bbq_2^{-1}
\]
and with $|u'v'| < |uv|$.
\end{lem}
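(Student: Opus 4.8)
The plan is to prove Lemma~\ref{Last} by localizing a single ``boundary interaction'' in the path $u \cdot \bbp \cdot v$ and then pushing the offending letter from $u$ or $v$ across the relation from $Q$ that $\bbp$ is built from, paying for the crossing with an application of one of the parallel paths in $X_1 \cup X_1'$. Since $\bbp$ is a non-empty path in $\Gamma_T$, its edges all involve relations $(l,r) \in Q$, and since $uv$ is non-empty at least one of $u,v$ contributes a letter adjacent to $\bbp$. The whole point of the sets $X_1$ and $X_1'$ is to record exactly what happens when a letter $x \in A \cup B \cup C$ sits to the left (resp.\ right) of a defining relation of $T$ and must be commuted past it using the rewriting rules $zx = \sigma(z,x)$ and $xz = \tau(x,z)$ from $R_T$. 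So I expect the argument to be essentially a bookkeeping induction that reduces $|uv|$ by peeling off one boundary letter at a time.

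\textbf{Main reduction step.} First I would reduce to the case where $\bbp$ is a single edge, say $\bbp = (s_1, (l,r), \delta, s_2)$ with $s_1,s_2 \in Z^*$, by treating the edges of $\bbp$ one at a time via (H3); the general path then follows by composing the per-edge conclusions. With $\bbp$ a single edge, write $u \equiv u' x$ (peeling the rightmost letter of $u$) and $v \equiv y v'$ (peeling the leftmost letter of $v$). The crucial observation is the following: if $x \in A \cup B \cup C$, then the edge $u \cdot \bbp \cdot v$ has an initial segment of the form $u' \cdot (x \cdot \bbp) \cdot v$, and the subpath $x \cdot \bbp$ is precisely where an $X_1'$-type parallel path applies, letting us rewrite $x\,s_1 l s_2\, v \leadsto (\text{word in } Z) \cdot v$ by commuting $x$ leftward through the leading letter $z$ of $l$ using $zx = \sigma(z,x)$, at the cost of a positive $\Gamma_0$-path and a $\Gamma_T$-path. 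Dually, if $y \in A \cup B \cup C$ abuts $\bbp$ on the right, one uses $X_1$ and $xz = \tau(x,z)$. After such a crossing the letter $x$ (or $y$) has been absorbed into a $Z^+$-word, so the new $u'$ or $v'$ is strictly shorter, giving $|u'v'| < |uv|$; the prefactor and postfactor are the desired $\bbq_1,\bbq_2 \in P_+(\Gamma_0)$, and the middle is the new $\Gamma_T$-path $\bbp'$.

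\textbf{Handling the cases and the letter types.} I would organize the body as a case analysis on whether the boundary letters adjacent to $\bbp$ lie in $Z$ or in $A \cup B \cup C$, and on which side ($u$ or $v$) is non-empty. If the adjacent letter already lies in $Z$, then it simply merges into the $\Gamma_T$-path $\bbp'$ (absorbing $z$ into $s_1$ or $s_2$) with no crossing needed, and again the count $|u'v'|$ drops because that letter now belongs to the $Z^+$ portion rather than to $u$ or $v$; here $\bbq_1,\bbq_2$ may be taken empty. The genuinely nontrivial cases are exactly those where an $A \cup B \cup C$ letter abuts $\bbp$, and these are resolved by the $X_1,X_1'$ relations as above. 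The precise matching of the four-tuples $((x,l=r,+1,1),\,(1,xz=\tau(x,z),+1,l'),\,\bbp_{\cdots},\,(1,xz'=\tau(x,z'),-1,r'))$ against the subpath of $u \cdot \bbp \cdot v$ is where one must be careful with the decompositions $l \equiv zl'$, $r \equiv z'r'$ (and their mirror images for $X_1'$), and with the sign $\delta$ of the edge of $\bbp$.

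\textbf{Anticipated obstacle.} I expect the main obstacle to be purely combinatorial: correctly lining up the definitions of the $X_1$/$X_1'$ parallel paths with the actual subword being rewritten, in particular tracking the auxiliary $Z$-words $\sigma(z,x),\tau(x,z)$ and the fixed $\Gamma_T$-connecting paths $\bbp_{\beta_1,\beta_2}$ so that the resulting middle piece is genuinely a $\Gamma_T$-path and the flanking pieces are genuinely in $P_+(\Gamma_0)$. There is also a mild bookkeeping subtlety in the boundary case $u$ or $v$ empty: since $uv$ is non-empty one of them supplies the crossing letter, and I would note (as the paper does for $\Lambda_w$ in Lemma~\ref{type1}) that when one side is empty the corresponding factor $\bbq_1$ or $\bbq_2$ is replaced by the appropriate empty path. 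Once the single-letter, single-edge crossing is verified, the full statement is immediate by (H2), (H3) and composition, so the conceptual content is entirely in setting up that one crossing correctly.
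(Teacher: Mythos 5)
Your overall architecture coincides with the paper's: peel one boundary letter off $u$ (or dually $v$), absorb a letter from $Z$ into the $\Gamma_T$-path for free, and push a letter $x \in A \cup B \cup C$ across $\bbp$ edge by edge, flanking with positive $\Gamma_0$-edges and composing. However, two concrete defects need fixing. First, you have the roles of $X_1$ and $X_1'$ (and of $\sigma$ and $\tau$) systematically swapped. When $x$ is the rightmost letter of $u$ it sits to the \emph{left} of the relation, so the configuration is $xz$ with $z$ the leading letter of the vertex, the relevant $R_0$-rule is $xz = \tau(x,z)$, and the relevant parallel paths are those of $X_1$, whose first component is $(x, l=r, +1, 1)$; the set $X_1'$ and the rules $zx = \sigma(z,x)$ serve the dual situation of a letter abutting $\bbp$ on the right. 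Since the conclusion only asserts $\sim_{X_1 \cup X_1'}$, this mislabelling does not invalidate the claim, but as written your citations of the defining four-tuples do not match the subwords being rewritten.

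Second, and more substantively, your key step --- that for a single edge $\bbp = (s_1, l=r, \delta, s_2)$ the subpath $x \cdot \bbp$ ``is precisely where an $X_1$-type parallel path applies'' --- is only true when $s_1$ is empty, i.e.\ when the $Q$-relation is applied immediately adjacent to $x$. If $s_1$ is non-empty (the paper's case ``$\gamma_i$ non-empty''), the edge $x \cdot \bbp = (xs_1, l=r, \delta, s_2)$ is not of the form occurring in any $X_1$-pair; there the crossing is instead a plain application of (H1), because the $R_0$-edge combining $x$ with the leading letter of $s_1$ does not overlap the $Q$-edge, and the shifted edge $(\tau(x,z)s_1'', l=r, \delta, s_2)$ becomes part of $\bbp'$. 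The $X_1$-pairs, with their fixed connecting paths $\bbp_{\beta_1,\beta_2}$, are genuinely needed only in the adjacent case, precisely because the two ends of the edge may begin with different letters $z_i \not\equiv z_{i+1}$, so that $\tau(x,z_i)$ and $\tau(x,z_{i+1})$ need not be related by the edge's relation --- a point the paper flags explicitly. This case split also matters for your closing remark that ``the general path then follows by composing the per-edge conclusions'': the flanking $\Gamma_0$-edges must be chosen canonically at each vertex of $\bbp$ (the unique positive edge applying $xz_i = \tau(x,z_i)$ to the first letter $z_i$ of that vertex), so that the right flank produced for the $i$-th edge is literally the same edge as the left flank produced for the $(i+1)$-st and the two cancel; with independent per-edge choices the composed expression retains uncancelled $\Gamma_0$-paths in its interior and is not of the required form $\bbq_1 (u' \cdot \bbp' \cdot v') \bbq_2^{-1}$ with $\bbp'$ a path in $\Gamma_T$.
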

\begin{proof}
Let $\bbp = \bbe_1 \circ \bbe_2 \circ \ldots \circ \bbe_n$ be a non-empty path in $\Gamma_T$ and let $u,v \in (A \cup B \cup C \cup Z)^*$ such that $uv$ is non-empty. Suppose that $u$ is non-empty. The case that $v$ is non-empty is dealt with using a dual argument and the set of parallel paths $X_1'$. Decompose $u \equiv u'x$ where $x \in A \cup B \cup C \cup Z$ and $u'$ is a, possibly empty, word over the same alphabet.

If $x \in Z$ then the path $(x \cdot \bbp)$ also belongs to $\Gamma_T$ and the result follows by setting $\bbq_1, \bbq_2$ to be empty, $\bbp' := (x \cdot \bbp)$, $u' = u'$ and $v' = v$.

Now suppose $x \not\in Z$. Let $z_i$ be the first letter of $\iota \bbe_i$ for $i=1,2 \ldots, n$, and let $z_{n+1}$ be the first letter of $\tau \bbe_n$, noting that each $z_i$ belongs to $Z$. For each $i = 1, \ldots, n+1$ define
\[
\bbf_i = (u', x z_i = \tau(x,z_i), +1, w_i)
\]
where the $w_i$ are obtained by the identities $\iota \bbe_i \equiv z_i w_i$ (for $i= 1, \ldots, n$) and $\tau \bbe_n \equiv z_{n+1} w_{n+1}$. Note that we can have $xz_i \neq xz_{i+1}$ in $S$ for some (or even all) values of $i$.

Now  for each $i \in \{1, \ldots, n\}$ we shall define a path $\bbs_i$ of $\Gamma_T$. For each $i \in \{1, \ldots, n\}$ let $\bbe_i = (\gamma_i, l_i = r_i, \epsilon_i, \delta_i)$. The definition of $\bbs_i$ varies depending on whether or not $\gamma_i$ is empty.

Suppose first that $\gamma_i$ is non-empty, so that $\gamma_i \equiv z_i \gamma_i'$ and $z_{i+1} \equiv z_i$ which implies $\tau(x,z_i) \equiv \tau(x,z_{i+1})$. In this case we define
\[
\bbs_i = (\tau(x,z_i) \gamma_i', l_i = r_i, \epsilon_i, \delta_i  )
\]
which is an edge of $\Gamma_T$. Applying (H1) we observe that
\[
(u \cdot \bbe_i \cdot v) \bbf_{i+1} \sim \bbf_i (u'\tau(x,z_i) \gamma_i', l_i = r_i, \epsilon_i, \delta_i v ).
\]
Rearranging this expression gives
\[
u \cdot \bbe_i \cdot v \sim \bbf_i (u' \cdot \bbs_i \cdot v) \bbf_{i+1}^{-1}.
\]
On the other hand, if $\gamma_i$ is empty, so that $l_i \equiv z_i l_i'$ and $r_i \equiv z_{i+1} r_i'$, we define
\[
\bbs_i = \bbp_{\tau(x,z_i)l_i', \tau(x,z_{i+1})r_i'} \cdot \delta_i,
\]
which is a path in $\Gamma_T$. By the definition of $X_1$ we have 
$$
\begin{array}{rcl}
u \cdot \bbe_i \cdot v & = & (u'x \gamma_i, l_i = r_i, \epsilon_i, \delta_i v) \\
& = &  u' \cdot (x, l_i = r_i, \epsilon_i, 1) \cdot \delta_i v \\
& \sim_{X_1} & u' \cdot (1, x z_i = \tau(x, z_i), +1, l_i') \cdot \delta_i v  \\
& & \quad \quad  \quad  u' \cdot \bbp_{\tau(x, z_i)l_i', \tau(x,z_{i+1}) r_i'} \cdot \delta_i v  \\
& & \quad \quad  \quad  \quad \quad  \quad  u' \cdot (1, x z_{i+1} = \tau(x,z_{i+1}) , -1, r_i') \cdot \delta_i v \\
& = & \bbf_i (u' \cdot \bbs_i \cdot v) \bbf_{i+1}^{-1}.
\end{array}
$$
Combining these observations completes the proof of the lemma since 
$$
\begin{array}{rcl}
u \cdot \bbp \cdot v & \sim &
(u \cdot \bbe_1 \cdot v) \ (u \cdot \bbe_2 \cdot v) \ \ldots \ (u \cdot \bbe_{n} \cdot v) \\
& \sim_{X_1} & 	
		\bbf_1 (u' \cdot \bbs_1 \cdot v) \bbf_2^{-1} 
        \bbf_2 (u' \cdot \bbs_2 \cdot v) \bbf_3^{-1} \bbf_3^{-1} \ldots
		\bbf_n^{-1} \bbf_n (u' \cdot \bbs_n \cdot v) \bbf_{n+1}^{-1} \\
& \sim & \bbf_1  (u' \cdot \bbs_1 \bbs_2 \ldots \bbs_n \cdot v)  \bbf_{n+1}^{-1} \\
& = & \bbq_1  (u' \cdot \bbp' \cdot v')  \bbq_2
\end{array}
$$
where $\bbq_1 = \bbf_1$, $\bbq_2 = \bbf_{n+1}^{-1}$, $v' \equiv v$ and $\bbp' = \bbs_1 \bbs_2 \ldots \bbs_n$ is a path in $\Gamma_T$.\end{proof}

\begin{cor}\label{LastCorol}
Let $\bbp$ be a path in $\Gamma_T$ and let $u,v \in (A \cup B \cup C \cup Z)^*$. Then there exist paths $\bbp_1$ and $\bbp_2$ in $P_+(\Gamma_0)$ and a path $\bbq$ in $P(\Gamma_T)$ such that 
\[
u \cdot \bbp \cdot v \sim_{X_1 \cup X_1'} \bbp_1 \bbq \bbp_2^{-1}.
\]
\end{cor}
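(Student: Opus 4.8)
The plan is to iterate Lemma~\ref{Last}, arguing by induction on $|uv|$. The base case is $|uv|=0$: here $u$ and $v$ are both empty, so $u \cdot \bbp \cdot v = \bbp$ is already a path in $\Gamma_T$, and one simply takes $\bbq = \bbp$ together with the empty paths $\bbp_1 = 1_{\iota\bbp}$ and $\bbp_2 = 1_{\tau\bbp}$, which lie in $P_+(\Gamma_0)$ since their base vertices belong to $Z^+$.

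For the inductive step ($uv$ non-empty) I would split according to whether $\bbp$ is empty. If $\bbp$ is non-empty, Lemma~\ref{Last} produces $\bbq_1,\bbq_2 \in P_+(\Gamma_0)$, words $u',v'$ with $|u'v'|<|uv|$, and a path $\bbp'$ in $\Gamma_T$ with $u\cdot\bbp\cdot v \sim_{X_1\cup X_1'} \bbq_1(u'\cdot\bbp'\cdot v')\bbq_2^{-1}$. Applying the induction hypothesis to $u'\cdot\bbp'\cdot v'$ (legitimate since $\bbp'$ is a path in $\Gamma_T$, possibly empty, and $|u'v'|<|uv|$) gives $\bbp_1',\bbp_2' \in P_+(\Gamma_0)$ and $\bbq'\in P(\Gamma_T)$ with $u'\cdot\bbp'\cdot v' \sim_{X_1\cup X_1'} \bbp_1'\bbq'(\bbp_2')^{-1}$. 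Substituting and regrouping via $(\bbq_2\bbp_2')^{-1}=(\bbp_2')^{-1}\bbq_2^{-1}$ yields $u\cdot\bbp\cdot v \sim_{X_1\cup X_1'} (\bbq_1\bbp_1')\,\bbq'\,(\bbq_2\bbp_2')^{-1}$. Since the concatenation of composable positive paths in $\Gamma_0$ is again such a path, $\bbq_1\bbp_1'$ and $\bbq_2\bbp_2'$ lie in $P_+(\Gamma_0)$, while $\bbq'$ serves as the middle factor in $P(\Gamma_T)$, which is exactly the required form. The only thing to verify is that the indicated compositions are legitimate, which is immediate from matching the initial and terminal vertices recorded in the two rewrites.

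The one case needing separate treatment is $\bbp$ empty with $uv$ non-empty, since Lemma~\ref{Last} does not apply. Here $\bbp = 1_w$ for some $w\in Z^+$, so $u\cdot\bbp\cdot v$ is the empty path $1_{uwv}$. Because $w$ represents an element of the ideal $T$, so does $uwv$, and $uwv$ contains a letter of $Z$; hence Lemma~\ref{Lemma1}(ii) supplies a path $\bbr\in P_+(\Gamma_0)$ from $uwv$ to some $w''\in Z^+$. Taking $\bbp_1=\bbp_2=\bbr$ and $\bbq=1_{w''}\in P(\Gamma_T)$, axiom (H4) gives $\bbr\,\bbr^{-1}\sim 1_{uwv}$, which is precisely the desired equivalence.

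I expect no serious obstacle here: the genuine content is all contained in Lemma~\ref{Last}, and the work in the corollary is the routine packaging of an induction. The only points requiring care are preserving membership in $P_+(\Gamma_0)$ under concatenation of the outer factors, and handling the empty-path degeneracies (both $u,v$ empty, and $\bbp$ empty) so that the middle factor $\bbq$ always remains a genuine path in $\Gamma_T$.
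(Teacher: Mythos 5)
Your proposal is correct and takes essentially the same route as the paper: the paper's proof handles the empty-$\bbp$ case via Lemma~\ref{Lemma1}(ii) and otherwise appeals to ``repeated application of Lemma~\ref{Last},'' which is exactly your induction on $|uv|$. The one point you make explicit that the paper leaves implicit is the degenerate case where the path $\bbp'$ produced by Lemma~\ref{Last} becomes empty while $u'v'$ is still non-empty, which you correctly resolve with Lemma~\ref{Lemma1}(ii) and axiom (H4).
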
 
\begin{proof}
If $\bbp$ is empty then the result holds trivially by applying Lemma~\ref{Lemma1}(ii). Otherwise the result follows by repeated application of Lemma~\ref{Last}. 
\end{proof}

\noindent \textbf{The parallel paths $\mathbf X_2$.} For every edge $\bbe$ in $\Gamma_S$ by Lemmas~\ref{Lemma1} and \ref{dagger} we can fix paths $\bbp_{\iota \bbe}$, $\bbp_{\tau \bbe}$ in $P_+(\Gamma)$ and $\bbq_{\bbe}$ such that 
\[
\bbq_{\bbe} \in
\begin{cases}
\overline{B^1 \cdot P(\Gamma_G) \cdot C^1} & \mbox{if $\iota \bbe$ represents an element of $S \setminus T$} \\
P(\Gamma_T) & \mbox{if $\iota \bbe$ represents an element of $T$}
\end{cases}
\]
and where $\iota \bbp_{\iota \bbe} \equiv \iota \bbe$, $\iota \bbp_{\tau \bbe} \equiv \tau \bbe$, $\tau \bbp_{\iota \bbe} \equiv \iota \bbq_{\bbe}$ and $\tau \bbp_{\tau \bbe} \equiv \tau \bbq_{\bbe}$. Let $X_2$ be the set of parallel paths 
\[
\{ (\bbe, \bbp_{\iota \bbe} \bbq_{\bbe} \bbp_{\tau \bbe}^{-1}):
\bbe = (\alpha, r, \epsilon, \beta) \in P(\Gamma_S), \alpha, \beta \in A \cup B \cup C \cup Z \cup \{ 1 \} \ \& \ \epsilon \in \{ +1, -1 \} \}.
\]
Note that $X_2$ is finite when $\mathcal{P}_S$ is a finite presentation. 
\begin{lem}\label{Lemma3}
Let $\bbe$ be an edge in $\Gamma_S$.
\begin{enumerate}[(i)]
\item If $\iota \bbe$ represents an element of $S \setminus T$ then there exist paths $\bbp_3, \bbp_4 \in P_+(\Gamma)$ and $\bbq \in \overline{B^1 \cdot P(\Gamma_G) \cdot C^1}$ such that
$
\bbe \sim_{X} \bbp_3 \bbq \bbp_4^{-1}.
$
\item If $\iota \bbe$ represents an element of $T$ then there exist paths $\bbp_3, \bbp_4 \in P_+(\Gamma)$ and $\bbq \in P(\Gamma_T)$ such that
$
\bbe \sim_{X} \bbp_3 \bbq \bbp_4^{-1}.
$
\end{enumerate}
\end{lem}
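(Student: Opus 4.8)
The plan is to prove parts (i) and (ii) together by first reducing to the single‑letter‑context case handled by $X_2$, and then renormalizing the resulting central path. Write $\bbe = (w_1, r, \epsilon, w_2)$ and decompose $w_1 \equiv w_1'' \alpha$, $w_2 \equiv \beta w_2''$, where $\alpha$ (resp. $\beta$) is the last letter of $w_1$ (resp. first letter of $w_2$), taken empty exactly when $w_1$ (resp. $w_2$) is empty. Setting $\bbe_0 = (\alpha, r, \epsilon, \beta)$ we have $\bbe = w_1'' \cdot \bbe_0 \cdot w_2''$, and since $(\bbe_0, \bbp_{\iota \bbe_0} \bbq_{\bbe_0} \bbp_{\tau \bbe_0}^{-1}) \in X_2$, condition (H2) gives
\[
\bbe \sim_X (w_1'' \cdot \bbp_{\iota \bbe_0} \cdot w_2'')(w_1'' \cdot \bbq_{\bbe_0} \cdot w_2'')(w_1'' \cdot \bbp_{\tau \bbe_0} \cdot w_2'')^{-1}.
\]
The two outer paths lie in $P_+(\Gamma)$, since $P_+(\Gamma)$ is closed under the two‑sided action of $(A\cup B\cup C\cup Z)^*$, so they may be absorbed into the eventual $\bbp_3$ and $\bbp_4$. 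Everything therefore reduces to renormalizing the middle path $w_1'' \cdot \bbq_{\bbe_0} \cdot w_2''$ back to the required form.

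Here I would split according to where $\iota \bbe_0$ lies, using that $T$ is an ideal together with Lemma~\ref{dagger} and Lemma~\ref{trick} to locate the relevant products. If $\iota \bbe_0$ represents an element of $T$, then $\bbq_{\bbe_0} \in P(\Gamma_T)$ and $w_1'' \cdot \bbq_{\bbe_0} \cdot w_2''$ is a $\Gamma_T$‑path conjugated by context, so Corollary~\ref{LastCorol} applies directly and rewrites it, modulo $X_1 \cup X_1'$, as $\bbp_1 \bbq \bbp_2^{-1}$ with $\bbp_1, \bbp_2 \in P_+(\Gamma_0) \subseteq P_+(\Gamma)$ and $\bbq \in P(\Gamma_T)$. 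If instead $\iota \bbe_0$ represents an element of $S \setminus T$, then $\bbq_{\bbe_0} \in \overline{b_0 \cdot P(\Gamma_G) \cdot c_0}$ for some $b_0 \in B^1$, $c_0 \in C^1$, and the task is to push the positive rewriting of the new prefix $w_1'' b_0$ and suffix $c_0 w_2''$ back to quasi‑normal form through the body of $\bbq_{\bbe_0}$. This is the exact $G$‑analogue of Lemma~\ref{Last}, and it is here that the parallel paths $X_3$ and $X_e$ (introduced for precisely this purpose) are used: the positive context edges coming from $R_U$ among the letters $B, C$ (e.g. $b_j b_i = b_j p_{1 i}$, $c_\lambda b_i = p_{\lambda i}$), together with those from $R_0$, commute past the $R$‑edges acting inside the central $A^*$‑block using only (H1), while the overlaps with the idempotent relations $R_e$ (namely $b_i e = b_i$ and $e c_\lambda = c_\lambda$) at the two boundaries are resolved by $X_e$ (and $X_3$). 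The outcome is again of the form $\bbp_5 \bbq \bbp_6^{-1}$ with $\bbp_5, \bbp_6 \in P_+(\Gamma)$ and $\bbq \in \overline{B^1 \cdot P(\Gamma_G) \cdot C^1}$.

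To conclude I would reconcile the two case‑splits. When $\iota \bbe$ itself represents an element of $S \setminus T$, the ideal property forces $\iota \bbe_0 \in S \setminus T$ (otherwise $w_1''(\iota\bbe_0)w_2''$ would lie in $T$), so only the second case occurs and we land in $\overline{B^1 \cdot P(\Gamma_G) \cdot C^1}$, giving (i). When $\iota \bbe$ represents an element of $T$, either subcase may arise; if $\iota \bbe_0 \in S \setminus T$, the extra context multiplies the product into the ideal, as detected by Lemma~\ref{trick}, and the renormalization of the conjugated $G$‑path is driven all the way into $Z^+$, so the central path becomes a $\Gamma_T$‑path, giving (ii). Absorbing the accumulated positive paths into $\bbp_3, \bbp_4$ then yields the stated form in both cases.

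The main obstacle is the second subcase of the middle step, that is, the $G$‑analogue of Lemma~\ref{Last}. Commuting positive context rewrites past the interior $R$‑edges is routine via (H1), but the overlaps with the relations $R_e$ at the $B$‑ and $C$‑boundaries cannot be resolved by the homotopy axioms alone and are exactly what necessitate the parallel paths $X_3$ and $X_e$. Getting these boundary identities right, while simultaneously tracking the moment at which a product slips into the ideal $T$ and forces a switch to the $\Gamma_T$ analysis, is the delicate part of the argument.
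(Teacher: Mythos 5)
There is a genuine gap, and it lies exactly where you flag ``the main obstacle'': the renormalization of the conjugated middle path. Your order of operations is to split off the boundary letters $\alpha,\beta$ of the \emph{raw} context words $w_1,w_2$ and apply $X_2$ immediately, which leaves the middle path $w_1''\cdot\bbq_{\bbe_0}\cdot w_2''$ conjugated by \emph{arbitrary} words $w_1'',w_2''\in(A\cup B\cup C\cup Z)^*$. When $\bbq_{\bbe_0}$ is of $G$-type, no result in the paper renormalizes such a path, and your proposed mechanism does not work with the parallel-path families as defined: $X_3$ contains only pairs of positive edges whose relations come from $R_0\cup R_U\cup R_T$ (both edges in $P_+(\Gamma)$), so it cannot resolve an overlap between a context-rewriting edge and an $R\cup R_e$ edge inside $\bbq_{\bbe_0}$ (e.g.\ $ab_i=ap_{1i}$ against $b_ie=b_i$, overlapping in $b_i$, or $za=\sigma(z,a)$ against an $R$-edge); and $X_e$ contains only the single $b_ie c_\lambda$ overlap. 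So the ``exact $G$-analogue of Lemma~\ref{Last}'' you invoke is not available and cannot be manufactured from $X$ by the commutation argument you sketch. A second instance of the same problem appears in your final case analysis: with unnormalized contexts the mixed subcase ($\iota\bbe$ in $T$ but $\iota\bbe_0$ in $S\setminus T$, e.g.\ $w_1\equiv za$ with $z\in Z$, $a\in A$) genuinely occurs, and you then need to drive a $G$-type path into $Z^+$ across arbitrary context --- yet another unproven renormalization. Note also that Lemma~\ref{trick} cannot be applied in your setting at all, since it requires the flanking words to lie in $B^1A^*C^1\cup\{1\}$.

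The paper's proof is arranged precisely to avoid ever needing such a $G$-side renormalization. It first rewrites the full context words $u,v$ into quasi-normal form $u',v'$ using Lemma~\ref{Lemma1} and (H1), and only \emph{then} splits off the boundary letters of $u',v'$ before applying $X_2$. The payoff is twofold. In part (i) the residual contexts $u'',v''$ are prefixes/suffixes of quasi-normal forms, so Lemma~\ref{trick} shows that $u''\cdot\bbq_{\bbf}\cdot v''$ already lies in $\overline{B^1\cdot P(\Gamma_G)\cdot C^1}$ --- no renormalization is needed and only $X_2$ is used. In part (ii), Lemma~\ref{trick} (applied contrapositively) rules out the mixed subcase entirely, forcing $\bbq_{\bbf}\in P(\Gamma_T)$, and the conjugated $\Gamma_T$-path is then handled by Corollary~\ref{LastCorol}, which is the one renormalization tool the paper actually provides (built from $X_1\cup X_1'$). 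To repair your argument you would have to restructure it along these lines: normalize the contexts before invoking $X_2$, rather than after.
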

\begin{proof}
(i) Let $\bbe = (u,r,\epsilon,v)$ be an arbitrary edge of the graph $\Gamma_S$. Since $\iota \bbe$ represents an element of $S \setminus T$, and since $T$ is an ideal, it follows that every subword of the word $\iota \bbe$ also represents an element of $S \setminus T$. It now follows by Lemma~\ref{Lemma1}(i) there is a path $\bbp_1 \in P_+(\Gamma)$ from $u$ to some $u' \in B^1 A^* C^1$, and a path $\bbp_2 \in P_+(\Gamma)$ from $v$ to some $v' \in B^1 A^* C^1$. Appling (H1) gives
\[
\bbe \sim
(\bbp_1 \cdot r_{\epsilon} v) (u' r_{\epsilon} \cdot \bbp_2)
(u', r, \epsilon, v')
(\bbp_1^{-1} \cdot r_{-\epsilon} v') (u r_{-\epsilon} \cdot \bbp_2^{-1}).
\]
Now consider the edge $\bbe' = (u', r, \epsilon, v')$. Write $u' \equiv u'' \alpha$ and $v' \equiv \beta v''$ where $\alpha, \beta \in A \cup B \cup C \cup \{1 \}$, and where $\alpha$ (resp. $\beta$) is empty only if $u'$ (resp. $v'$) is empty. Putting $\bbf = (\alpha, r, \epsilon, \beta)$, from the definition of $X_2$ we have
\[
\bbe' = u'' \cdot \bbf \cdot v'' \sim_{X_2} u'' \cdot \bbp_{\iota \bbf} \bbq_{\bbf} \bbp_{\tau \bbf}^{-1} \cdot v''.
\] \begin{sloppypar}
Next by applying Lemma~\ref{trick}, since $\iota \bbq_\bbf, \tau \bbq_\bbf \in B^1 \cdot A^* \cdot C^1$ we deduce that $u'' \iota \bbq_{\bbf} v'', u'' \tau \bbq_{\bbf} v'' \in B^1 A^* C^1$ and by the definitions of $\bbq_{\bbf}$ and of $\overline{B^1 \cdot P(\Gamma_G) \cdot C^1}$ this implies  
$$
u'' \cdot \bbq_\bbf \cdot v'' \in \overline{B^1 \cdot P(\Gamma_G) \cdot C^1}.
$$
This completes the proof of part (i) since we have  
$
\bbe \sim_{X_2} \bbp_3 \bbq \bbp_4^{-1}
$
where $\bbp_3 = (\bbp_1 \cdot r_{\epsilon} v) (u' r_{\epsilon} \cdot \bbp_2) (u'' \cdot \bbp_{\iota \bbf} \cdot v'')$,
$\bbp_4 = (u r_{-\epsilon} \cdot \bbp_2) (\bbp_1 \cdot r_{-\epsilon} v') (u'' \cdot \bbp_{\tau \bbf} \cdot v'')$, and  
$\bbq = u'' \cdot \bbq_\bbf \cdot v'' \in \overline{B^1 \cdot P(\Gamma_G) \cdot C^1}$. \end{sloppypar}

(ii) We proceed in much the same way as in part (i) except that here we need also to apply Corollary~\ref{LastCorol}.  Let $\bbe = (u,r,\epsilon,v)$ be an arbitrary edge of the graph $\Gamma_S$. It now follows by Lemma~\ref{Lemma1} that there is a path $\bbp_1 \in P_+(\Gamma)$ from $u$ to some $u' \in B^1 A^* C^1 \cup Z^+$ (depending on whether $u$ represents an element of $T$ or of $S \setminus T$), and a path $\bbp_2 \in P_+(\Gamma)$ from $v$ to some $v' \in B^1 A^* C^1 \cup Z^+$. Applying (H1) gives
\[
\bbe \sim
(\bbp_1 \cdot r_{\epsilon} v) (u' r_{\epsilon} \cdot \bbp_2)
(u', r, \epsilon, v')
(\bbp_1^{-1} \cdot r_{-\epsilon} v') (u r_{-\epsilon} \cdot \bbp_2^{-1}).
\]
Now consider the edge $\bbe' = (u', r, \epsilon, v')$. Write $u' \equiv u'' \alpha$ and $v' \equiv \beta v''$ where $\alpha, \beta \in Z \cup A \cup B \cup C \cup \{1 \}$, and where $\alpha$ (resp. $\beta$) is empty only if $u'$ (resp. $v'$) is empty. It now follows from Lemma~\ref{trick} that the word $\alpha r_{\epsilon} \beta$ (and hence also the word $\alpha r_{-\epsilon} \beta$) represents an element of $T$. Indeed, if $\alpha r_{\epsilon} \beta$ represented an element of $S \setminus T$ then by Lemma~\ref{trick} it would follow that $u r_{\epsilon} v$ represents an element of $S \setminus T$, contrary to the assumption that $\iota \bbe$ represents an element of $T$.  

Now putting $\bbf = (\alpha, r, \epsilon, \beta)$ from definition of $X_2$ we have
\[
\bbe' \sim_{X_2} u'' \cdot \bbp_{\iota \bbf} \bbq_{\bbf} \bbp_{\tau \bbf}^{-1} \cdot v''
\]
Since both $\alpha r_{\epsilon} \beta$ and $\alpha r_{-\epsilon} \beta$ represent an element of $T$ it follows that $\iota \bbq_{\bbf}, \tau \bbq_{\bbf} \in Z^+$ and hence by definition $\bbq_{\bbf} \in P(\Gamma_T)$. Now by Corollary~\ref{LastCorol} applied to $u'' \cdot \bbq_{\bbf} \cdot v''$ there are paths $\bbp_5, \bbp_6 \in P_+(\Gamma)$ and $\bbq \in P(\Gamma_T)$ such that
\[
u'' \cdot \bbq_\bbf \cdot v'' \sim_X \bbp_5 \bbq \bbp_6^{-1}.
\]
This completes the proof of part (ii) since we have 
$
\bbe \sim_{X} \bbp_3 \bbq \bbp_4^{-1}
$
where $\bbp_3 = (\bbp_1 \cdot r_{\epsilon} v) (u' r_{\epsilon} \cdot \bbp_2) (u'' \cdot \bbp_{\iota \bbf} \cdot v'') \bbp_5$,
$\bbp_4 = (u r_{-\epsilon} \cdot \bbp_2) (\bbp_1 \cdot r_{-\epsilon} v') (u'' \cdot \bbp_{\tau \bbf} \cdot v'') \bbp_6$, and $\bbq \in P(\Gamma_T)$.
\end{proof}

\noindent \textbf{The parallel paths $\mathbf X_3$.} For any pair of positive edges $\bbe_1, \bbe_2$ from $P_+(\Gamma)$ with 
$\iota \bbe_1 \equiv \iota \bbe_2$, by Lemma~\ref{Lemma1} and Lemma~\ref{dagger} we can fix paths $\bbp_{\tau \bbe_1}, \bbp_{\tau \bbe_2} \in P_+(\Gamma)$ and 
\[
\bbq_{\bbe_1, \bbe_2} \in 
\begin{cases}
\overline{B^1 \cdot P(\Gamma_G) \cdot C^1} & \mbox{if $\iota \bbe_1$ represents an element of $S \setminus T$ } \\
P(\Gamma_T) & \mbox{if $\iota \bbe_1$ represents an element of $T$ }
\end{cases}
\]
such that $\tau \bbe_1 \equiv \iota \bbp_{\tau \bbe_1}$, $\tau \bbe_2 \equiv \iota \bbp_{\tau \bbe_2}$, $\tau \bbp_{\tau \bbe_1} \equiv \iota \bbq_{\bbe_1, \bbe_2}$ and $\tau \bbp_{\tau \bbe_1} \equiv \tau \bbq_{\bbe_1, \bbe_2}$. We then let $X_3$ be the following set of parallel paths:
\[
\{
(\bbe_1 \bbp_{\tau \bbe_1} \bbq_{\bbe_1, \bbe_2}, \bbe_2 \bbp_{\tau \bbe_2}   ): 
\bbe_1, \bbe_2 \in P_+(\Gamma), 
\iota \bbe_1 \equiv \iota \bbe_2 \ \& \ | \iota \bbe_1 | \leq 2 |e| + 3
\}.
\]
Recall that $e$ is the word from $A^+$ representing the identity of the group $G$ that appears in the presentation $\mathcal{P}_S$. The restriction $|\iota \bbe_1| \leq 2|e| + 3$ comes from consideration of lengths of words appearing on left hand sides of the relations in $R_0 \cup R_U \cup R_T$. 

If $\mathcal{P}_S$ is a finite presentation then $X_3$ is finite. To see this, first observe that for every vertex $w$ of $\Gamma_S$ the number of edges $\bbe$ of $\Gamma_S$ with $\iota \bbe \equiv w$ is finite, as $w$ has finite length and there are only finitely many relations that can be applied to $w$. So since in the definition of $X_3$, $\iota \bbe_1$ and $\iota \bbe_2$ can be only one of finitely many words, since the generating set in the presentation $\mathcal{P}_S$ is finite, it follows that there are only finitely many possibilities for $\bbe_1$ and $\bbe_2$. Hence $X_3$ is finite. 

For the following proof the notion of the applications of relations to a word ``overlapping'' is important. For two edges $\bbe_1, \bbe_2 \in P_+(\Gamma)$ with $\iota \bbe_1 \equiv \iota \bbe_2$ we say that $\bbe_1$ and $\bbe_2$ do \emph{not overlap} if we can write 
$\iota \bbe_1 \equiv u r_{+1} v s_{+1} w$, where $u,v,w \in (A \cup B \cup C \cup Z)^*$ and 
\[
\{ \bbe_1, \bbe_2  \} = \{(u, r_{+1} = r_{-1}, +1, v s_{+1} w), (u r_{+1} v, s_{+1} = s_{-1}, +1, w)  \}. 
\]
Otherwise we say that $\bbe_1$ and $\bbe_2$ overlap.  

\begin{lem}\label{Lemma4}
Let $\bbe_1, \bbe_2 \in P_+(\Gamma)$ with $\iota \bbe_1 \equiv \iota \bbe_2$.
\begin{enumerate}[(i)]
\item If $\iota \bbe_1$ represents an element of $S \setminus T$ then there are paths $\bbp,\bbp' \in P_+(\Gamma)$ and $\bbq \in \overline{B^1 \cdot P(\Gamma_G) \cdot C^1}$ such that $\bbe_1 \bbp \bbq \sim_X \bbe_2 \bbp'$.
\item If $\iota \bbe_1$ represents an element of $T$ then there are paths $\bbp,\bbp' \in P_+(\Gamma)$ and $\bbq \in P(\Gamma_T)$ such that $\bbe_1 \bbp \bbq \sim_X \bbe_2 \bbp'$.
\end{enumerate}
\end{lem}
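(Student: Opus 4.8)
The plan is to read Lemma~\ref{Lemma4} as a local confluence (``critical pair'') statement for the positive one-step rewritings recorded by $\Gamma$, and to prove it by splitting on whether the two positive edges $\bbe_1, \bbe_2$ overlap, in the sense defined just above. The guiding principle is that, by Lemma~\ref{Lemma1}, any vertex can be driven forward by a positive path to quasi-normal form, and that two quasi-normal forms of the same element of $S \setminus T$ (resp.\ of $T$) carry identical $b_i$ and $c_\lambda$ decorations by Lemma~\ref{dagger} and differ only in an $A^*$-middle (resp.\ are two words of $Z^+$); such a difference is exactly what a correction path $\bbq$ in $\overline{B^1 \cdot P(\Gamma_G) \cdot C^1}$ (resp.\ in $P(\Gamma_T)$) is designed to absorb. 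Throughout I would use only the inclusion $X_3 \subseteq X$ together with the homotopy axioms (H1)--(H4). I describe case~(i); case~(ii) is handled identically, with $\overline{B^1 \cdot P(\Gamma_G) \cdot C^1}$ replaced by $P(\Gamma_T)$ and with Corollary~\ref{LastCorol} invoked to keep the $T$-parts in the correct shape.

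First the \emph{non-overlapping case}, where $\iota \bbe_1 \equiv u r_{+1} v s_{+1} w$ and $\{\bbe_1,\bbe_2\}$ consists of the $r$-edge on the left and the $s$-edge on the right. Completing each with the other redex gives positive edges $\bbf_1 = (u r_{-1} v, s, +1, w)$ and $\bbf_2 = (u, r, +1, v s_{-1} w)$, and axiom (H1), with the letters of $u,v,w$ inserted via (H2), yields $\bbe_1 \bbf_1 \sim \bbe_2 \bbf_2$, both paths ending at the common word $w_0 \equiv u r_{-1} v s_{-1} w$. By Lemma~\ref{Lemma1}(i) there is a positive path $\bbp_0 \in P_+(\Gamma)$ from $w_0$ to a quasi-normal form; appending it and using (H3) gives $\bbe_1 (\bbf_1 \bbp_0) \sim \bbe_2 (\bbf_2 \bbp_0)$. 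Taking $\bbp = \bbf_1 \bbp_0$, $\bbp' = \bbf_2 \bbp_0$ and $\bbq$ the empty path at that quasi-normal form finishes this case, since an empty path at a word of $B^1 A^* C^1$ lies in $\overline{B^1 \cdot P(\Gamma_G) \cdot C^1}$.

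For the \emph{overlapping case}, the key observation is that the longest left-hand sides occurring in $R_0 \cup R_U \cup R_T$ are $e b_i$ and $c_\lambda e$, of length $|e|+1$; hence two overlapping positive redexes lie in a subword $m$ of $\iota \bbe_1$ of length at most $2|e|+1 \le 2|e|+3$. I would factor $\iota \bbe_1 \equiv u_0\, m\, w_0$ and write $\bbe_i = u_0 \cdot \widehat{\bbe}_i \cdot w_0$ with $\iota \widehat{\bbe}_1 \equiv \iota \widehat{\bbe}_2 \equiv m$. The pair $(\widehat{\bbe}_1, \widehat{\bbe}_2)$ is then precisely of the type recorded in $X_3$, so $\widehat{\bbe}_1\, \bbp_{\tau \widehat{\bbe}_1}\, \bbq_{\widehat{\bbe}_1, \widehat{\bbe}_2} \sim_{X_3} \widehat{\bbe}_2\, \bbp_{\tau \widehat{\bbe}_2}$ with the two $\bbp$-paths positive and $\bbq_{\widehat{\bbe}_1, \widehat{\bbe}_2} \in \overline{B^1 \cdot P(\Gamma_G) \cdot C^1}$. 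Transporting through the context by (H2) yields
\[
\bbe_1 \, \bbp \, \bbg \sim_{X} \bbe_2 \, \bbp', \qquad \bbp = u_0 \cdot \bbp_{\tau \widehat{\bbe}_1} \cdot w_0,\quad \bbp' = u_0 \cdot \bbp_{\tau \widehat{\bbe}_2} \cdot w_0 \in P_+(\Gamma),
\]
where $\bbg = u_0 \cdot \bbq_{\widehat{\bbe}_1, \widehat{\bbe}_2} \cdot w_0$ is a path all of whose edges use relations from $R \cup R_e$.

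The main obstacle is that reintroducing the context $u_0, w_0$ takes the correction path out of the region: $\bbg$ is no longer of the form $\overline{B^1 \cdot P(\Gamma_G) \cdot C^1}$, because $u_0\,\bb{b}$ and $\bb{c}\,w_0$ need not be single letters of $B^1$, $C^1$. What remains is therefore a \emph{cleaning step}: to show $\bbg \sim_{X} \bbp_a\, \bbq\, \bbp_b^{-1}$ for some $\bbp_a, \bbp_b \in P_+(\Gamma)$ and some $\bbq \in \overline{B^1 \cdot P(\Gamma_G) \cdot C^1}$, after which absorbing $\bbp_a$ into $\bbp$ and $\bbp_b$ into $\bbp'$ gives $\bbe_1 (\bbp\, \bbp_a)\, \bbq \sim_X \bbe_2 (\bbp'\, \bbp_b)$, as required. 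I would obtain this by applying Lemma~\ref{Lemma3} to each edge of $\bbg$ (legitimate, as every such edge is an edge of $\Gamma_S$), absorbing the resulting positive tails, and then resolving the connecting positive segments between consecutive reducts; each such connection is a pair of positive reductions of one word to quasi-normal form, controlled by Lemma~\ref{dagger} and Lemma~\ref{trick} keeping the middles in $A^*$ and the $b_i, c_\lambda$ decorations fixed. This bookkeeping, rather than any single homotopy move, is where the real difficulty lies; in case~(ii) one additionally passes the $T$-valued correction through Corollary~\ref{LastCorol} to present it as $P_+(\Gamma_0)$-tails around a path of $P(\Gamma_T)$.
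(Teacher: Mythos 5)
Your non-overlapping case is exactly the paper's argument (complete each redex with the other, apply (H1), append a common positive path to quasi-normal form, take $\bbq$ empty), and your bound $2|e|+1$ on the span of an overlap is correct. The gap is in your overlapping case. You apply $X_3$ to the bare overlap pair $(\widehat{\bbe}_1,\widehat{\bbe}_2)$ inside the original context $u_0,w_0$, obtain a correction path $u_0\cdot\bbq_{\widehat{\bbe}_1,\widehat{\bbe}_2}\cdot w_0$ lying outside $\overline{B^1\cdot P(\Gamma_G)\cdot C^1}$, and then propose a ``cleaning step'': apply Lemma~\ref{Lemma3} to each of its edges and resolve the connecting positive segments between consecutive reducts. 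But resolving those connections --- showing that two positive paths in $P_+(\Gamma)$ from a common word to quasi-normal forms are $\sim_X$-related up to a correction path in $\overline{B^1\cdot P(\Gamma_G)\cdot C^1}$ --- is precisely Corollary~\ref{Corollary} (equivalently Lemma~\ref{Lemma5}), which the paper proves by well-founded induction on $F$ \emph{using Lemma~\ref{Lemma4}} as its one-step engine; your cleaning step is in effect the proof of Lemma~\ref{Lemma6}, which sits strictly downstream of the statement you are proving. The argument is therefore circular. Lemmas~\ref{dagger} and \ref{trick} do not rescue it: they are statements about which elements of $S$ words represent, so they yield the \emph{existence} of a connecting path between two quasi-normal forms of the same element, but not the homotopy $\sim_X$ between the two positive reductions, and the latter is the entire content of the resolution step. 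The same circularity afflicts your case~(ii).

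The paper's proof avoids this by reversing the order of operations. It first normalizes the contexts: by Lemma~\ref{Lemma1} there are positive paths from $w_1$ to $w_1'\equiv w_1''\alpha$ and from $w_2$ to $w_2'\equiv\beta w_2''$ with $w_1',w_2'\in B^1A^*C^1$ and $\alpha,\beta$ single letters; it transports $\bbe_1,\bbe_2$ into this normalized context using (H1); and only then does it invoke $X_3$ --- applied not to the bare overlap pair but to $(\alpha\cdot\bbf_3\cdot\beta,\ \alpha\cdot\bbf_4\cdot\beta)$, whose initial word has length at most $2|e|+3$. This is exactly why the bound in the definition of $X_3$ is $2|e|+3$ rather than $2|e|+1$: the two extra letters are there to absorb the boundary letters of the normalized contexts. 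The payoff is that the leftover contexts satisfy $w_1''\in B^1A^*\cup\{1\}$ and $w_2''\in A^*C^1\cup\{1\}$, so Lemma~\ref{trick} shows that the endpoints of $w_1''\cdot\bbq_{\alpha\cdot\bbf_3\cdot\beta,\,\alpha\cdot\bbf_4\cdot\beta}\cdot w_2''$ lie in $B^1A^*C^1$, whence this conjugated correction path still belongs to $\overline{B^1\cdot P(\Gamma_G)\cdot C^1}$ and no cleaning is needed at all. To repair your proof you must adopt this ordering (normalize first, then absorb $\alpha,\beta$ into the $X_3$ pair); as written, the overlap case does not go through.
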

\begin{proof}
(i) If $\bbe_1 = \bbe_2$ then the result holds trivially by Lemma~\ref{Lemma1}, so suppose that $\bbe_1 \neq \bbe_2$.
\begin{sloppypar}
Next suppose that $\bbe_1$ and $\bbe_2$ do not overlap. So without loss of generality we have $\iota \bbe_1 \equiv \iota \bbe_2 \equiv u r_{+1} v s_{+1} w$, $\bbe_1 = (u, r_{+1} = r_{-1}, +1, vs_{+1} w)$, and $\bbe_2 = (ur_{+1} v, s_{+1} = s_{-1}, +1, w)$. Then define $\bbf_1 = (ur_{-1} v, s_{+1} = s_{-1}, +1, w)$ and $\bbf_2 = (u, r_{+1} = r_{-1}, +1, vs_{-1} w)$. By Lemma~\ref{Lemma1} we can fix a path $\bbp_1 \in P^+(\Gamma)$ from $u r_{-1} v s_{-1} w$ to a word $x \in B^1 A^* C^1$. Now set $\bbq$ to be the empty path $1_x$, and set $\bbp = \bbf_1 \bbp_1$ and $\bbp' = \bbf_2 \bbp_1$. Applying (H1) we have $\bbe_1 \bbf_1 \sim \bbe_2 \bbf_2$ and hence $\bbe_1 \bbp \bbq \sim \bbe_2 \bbp'$, as  required.  
\end{sloppypar}
Now suppose that $\bbe_1$ and $\bbe_2$ do overlap. It follows that there exist $\bbf_3, \bbf_4 \in P_+(\Gamma)$ and words $w_1, w_2 \in (A \cup B \cup C \cup Z)^*$ such that $\iota \bbf_3 = \iota \bbf_4$, $\bbe_1 = w_1 \cdot \bbf_3 \cdot w_2$, $\bbe_2 = w_1 \cdot \bbf_4 \cdot w_2$ and $|\iota \bbf_3 | = |\iota \bbf_4| \leq 2 |e| + 1$. This inequality comes from the fact that $\bbe_1$ and $\bbe_2$ overlap, and that the lengths of the words appearing as left hand sides of the relations in $R_0 \cup R_U \cup R_T$ are bounded above by $|e| + 1$. 

As in the proof of Lemma~\ref{Lemma3}(i), applying Lemma~\ref{Lemma1}(i) there is a path $\bbp_1 \in P_+(\Gamma)$ from $w_1$ to some word $w_1' \in B^1 A^* C^1$, and a path $\bbp_2 \in P_+(\Gamma)$ from $w_2$ to some $w_2' \in B^1 A^* C^1$. Write $w_1' \equiv w_1'' \alpha$ and $w_2' \equiv \beta w_2''$ where $\alpha, \beta \in A \cup B \cup C \cup \{1 \}$, and where $\alpha$ (resp. $\beta$) is empty only if $w_1'$ (resp. $w_2'$) is empty. Let
\[
\bbe_1' = w_1'' \cdot (\alpha \cdot \bbf_3 \cdot \beta) \cdot w_2'', \quad
\bbe_2' = w_1'' \cdot (\alpha \cdot \bbf_4 \cdot \beta) \cdot w_2'' 
\]
Applying (H1) we obtain
\begin{align}
\bbe_1 (\bbp_1 \cdot \tau \bbf_3 w_2)
(w_1' \tau \bbf_3 \cdot \bbp_2) 
 \sim 
(\bbp_1 \cdot \iota \bbf_3 w_2) 
(w_1' \iota \bbf_3 \cdot \bbp_2) \bbe_1' \label{first}
\end{align}
and
\begin{align}
\bbe_2 (\bbp_1 \cdot \tau \bbf_4 w_2)
(w_1' \tau \bbf_4 \cdot \bbp_2) 
 \sim 
(\bbp_1 \cdot \iota \bbf_4 w_2) 
(w_1' \iota \bbf_4 \cdot \bbp_2) \bbe_2' \label{second} \\
 =  (\bbp_1 \cdot \iota \bbf_3 w_2) 
(w_1' \iota \bbf_3 \cdot \bbp_2) \bbe_2' \notag
\end{align}
since $\iota \bbf_3 \equiv \iota \bbf_4$. Then by definition of $X_3$, and since $|\iota(\alpha \cdot \bbf_3 \cdot \beta)| = |\iota \bbf_3| + 2 \leq 2|e| + 3$ we have
\[
(
(\alpha \cdot \bbf_3 \cdot \beta)
\bbp_{\tau(\alpha \cdot \bbf_3 \cdot \beta)} \bbq_{\alpha \cdot \bbf_3 \cdot \beta, \alpha \cdot \bbf_4 \cdot \beta}, \
(\alpha \cdot \bbf_4 \cdot \beta)
\bbp_{\tau(\alpha \cdot \bbf_4 \cdot \beta)}
) \in X_3.
\]
Hence by acting on the left by $w_1''$ and on the right by $w_2''$ we obtain
\begin{align}
 \notag \bbe_1' (w_1'' \cdot \bbp_{\tau(\alpha \cdot \bbf_3 \cdot \beta)}
\bbq_{\alpha \cdot \bbf_3 \cdot \beta, \alpha \cdot \bbf_4 \cdot \beta} \cdot w_2'') \\ \sim_{X_3} &
\bbe_2' (w_1'' \cdot \bbp_{\tau(\alpha \cdot \bbf_4 \cdot \beta)} \cdot w_2''). \label{third}
\end{align}
To complete the proof of part (i) we now set 
\[
\bbp = (\bbp_1 \cdot \tau \bbf_3 w_2)
(w_1' \tau \bbf_3 \cdot \bbp_2)
(w_1'' \cdot  \bbp_{\tau(\alpha \cdot \bbf_3 \cdot \beta)}  \cdot w_2''),
\]
which is a path in $P_+(\Gamma)$, define
\[
\bbp' = (\bbp_1 \cdot \tau \bbf_4 w_2)
(w_1' \tau \bbf_4 \cdot \bbp_2)
(w_1'' \cdot  \bbp_{\tau(\alpha \cdot \bbf_4 \cdot \beta)}  \cdot w_2''),
\]
which is a path in $P_+(\Gamma)$, and set $\bbq = w_1'' \cdot \bbq_{\alpha \cdot \bbf_3 \cdot \beta, \alpha \cdot \bbf_4 \cdot \beta} \cdot w_2''$. As in the proof of Lemma~\ref{Lemma3}, to see that 
$\bbq \in \overline{B^1 \cdot P(\Gamma_G) \cdot C^1}$ we apply Lemma~\ref{trick} to $w_1'' (\alpha \tau \bbf_3 \beta) w_2''$ and $w_1'' (\alpha \tau \bbf_4 \beta) w_2''$, deducing that $\iota \bbq, \tau \bbq \in B^1 A^* C^1$, which along with the definition of $\bbq_{\alpha \cdot \bbf_3 \cdot \beta, \alpha \cdot \bbf_4 \cdot \beta}$ implies that $\bbq \in \overline{B^1 \cdot P(\Gamma_G) \cdot C^1}$.  

(ii) We proceed in much the same way as in part (i) except that here we need also to apply Corollary~\ref{LastCorol}. The cases when $\bbe_1 = \bbe_2$, or when $\bbe_1$ and $\bbe_2$ do not overlap, are dealt with in exactly the same way as in part (i).

Now suppose that $\bbe_1$ and $\bbe_2$ do overlap. As in part (i) it follows that there exist $\bbf_3, \bbf_4 \in P_+(\Gamma)$ and words $w_1, w_2 \in (A \cup B \cup C \cup Z)^*$ such that $\iota \bbf_3 \equiv \iota \bbf_4$, $\bbe_1 \equiv w_1 \cdot \bbf_3 \cdot w_2$, $\bbe_2 = w_1 \cdot \bbf_4 \cdot w_2$ and $|\iota \bbf_3 | = |\iota \bbf_4| \leq 2 |e| + 1$. 

As in the proof of Lemma~\ref{Lemma3}(ii), applying Lemma~\ref{Lemma1} there is a path $\bbp_1 \in P_+(\Gamma)$ from $w_1$ to some word $w_1' \in B^1 A^* C^1 \cup Z^+$, and a path $\bbp_2 \in P_+(\Gamma)$ from $w_2$ to some $w_2' \in B^1 A^* C^1 \cup Z^+$. Write $w_1' \equiv w_1'' \alpha$ and $w_2' \equiv \beta w_2''$ where $\alpha, \beta \in Z \cup A \cup B \cup C \cup \{1 \}$, and where $\alpha$ (resp. $\beta$) is empty only if $w_1'$ (resp. $w_2'$) is empty. Let $\bbe_1'$ and $\bbe_2'$ be defined in the same way as in part (i). Then in exactly the same way as in part (i) we can prove that the equations \eqref{first}, \eqref{second} and \eqref{third} all hold. Here we set $\bbq' = w_1'' \cdot \bbq_{\alpha \cdot \bbf_3 \cdot \beta, \alpha \cdot \bbf_4 \cdot \beta} \cdot w_2''$. 

Next, in contrast to part (i), we claim that here $\bbq_{\alpha \cdot \bbf_3 \cdot \beta, \alpha \cdot \bbf_4 \cdot \beta} \in P(\Gamma_T)$. To see this it is sufficient to show that $\alpha \tau \bbf_3 \beta$ and $\alpha \tau \bbf_4 \beta$ both represent elements of $T$. If $\alpha$ or $\beta$ belongs to $Z$ then this is obvious. Otherwise, by Lemma~\ref{trick} if $\alpha \tau \bbf_3 \beta$ represented an element of $S \setminus T$ then it would follow that $\iota \bbq'$ also represents an element of $S \setminus T$, a contradiction. Similarly we see that $\alpha \tau \bbf_4 \beta$ must represent an element of $T$, and this completes the proof that $\bbq_{\alpha \cdot \bbf_3 \cdot \beta, \alpha \cdot \bbf_4 \cdot \beta} \in P(\Gamma_T)$. 

Finally, by Corollary~\ref{LastCorol} there exist paths $\bbp_3, \bbp_4 \in P_+(\Gamma)$ and $\bbq \in P(\Gamma_T)$ such that $\bbq' \sim_{X} \bbp_3 \bbq \bbp_4^{-1}$. This completes the proof of the lemma, with $\bbq$ defined as above, and by setting
\[
\bbp = (\bbp_1 \cdot \tau \bbf_3 w_2)
(w_1' \tau \bbf_3 \cdot \bbp_2)
(w_1'' \cdot  \bbp_{\tau(\alpha \cdot \bbf_3 \cdot \beta)}  \cdot w_2'')
\bbp_3
\] 
and
\[
\bbp' = (\bbp_1 \cdot \tau \bbf_4 w_2)
(w_1' \tau \bbf_4 \cdot \bbp_2)
(w_1'' \cdot  \bbp_{\tau(\alpha \cdot \bbf_4 \cdot \beta)}  \cdot w_2'')
\bbp_4,
\]
which are both paths in $P_+(\Gamma)$. 
\end{proof}
The next thing we want to do is to extend Lemma~\ref{Lemma4} to a statement about paths. To do this we need to introduce a function on words that, roughly speaking, gives a measure of how close that word is to being in quasi-normal form $B^1 A^* C^1 \cup Z^+$. 

As in the proof of Lemma~\ref{Lemma1}, given a word $w \in W^+$ and a subset $V \subseteq W$ we write $|w|_V$ to denote the total number of letters of $w$ that belong to $V$. Now we define  
$$
\begin{array}{rcl}
F: (A \cup B \cup C \cup Z)^+ & \rightarrow & \mathbb{N} \times \mathbb{N} \\
F(w) & = & (|w|_{B \cup C}, |w|_A).
\end{array}
$$
Note that $F(w)$ does not depend in any way on the number of letters from $Z$ in $w$. 
Now define the following total order $<$ on $\mathbb{N} \times \mathbb{N}$ by
\[
(n_1,m_1) < (n_2,m_2) \Leftrightarrow
\begin{cases}
n_1 < n_2 \ \mbox{or} \\
n_1 = n_2 \ \mbox{and} \ m_1 < m_2.
\end{cases}
\]
Clearly with this ordering $(\mathbb{N} \times \mathbb{N}, <)$ is well-founded (since it is the lexicographic product of two well-founded orders) with a unique minimal element $(0,0)$, which allows us to apply \emph{well-founded induction} (as we shall do below in the proof of Lemma~\ref{Lemma5}). 

Also define addition on pairs of natural numbers in the usual way where
\[
(n_1,m_1) + (n_2,m_2) = (n_1+n_2, m_1 + m_2).
\]
The idea behind the definition of $F$ is that given a vertex of $\Gamma$ if there are no edges from $P_+(\Gamma)$ coming out of it, it means that that the vertex belongs either to $B^1 A^* C^1 \cup Z^+$, and as the number $F(w)$ decreases, the closer we are to having our word in quasi-normal form.
\begin{lem}\label{FProp}
The function $F$ has the following properties.
\begin{enumerate}[(i)]
\item $F(w_1w_2) = F(w_1) + F(w_2)$ for all words $w_1, w_2$;
\item $F(w) = (0,0)$ if and only if $w \in Z^+$;
\item  $F(\tau \bbe) < F(\iota \bbe)$ for all $\bbe \in P_+(\Gamma)$.
\end{enumerate}
\end{lem}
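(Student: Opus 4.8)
The plan is to verify the three properties directly from the definition $F(w) = (|w|_{B \cup C}, |w|_A)$, handling the easy parts first and then focusing on part (iii), which is the only one requiring genuine case analysis.

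For part (i), I would observe that counting occurrences of letters from a fixed set is additive under concatenation: for any subset $V$ we have $|w_1 w_2|_V = |w_1|_V + |w_2|_V$. Applying this with $V = B \cup C$ and with $V = A$ separately, and using the componentwise definition of addition on $\mathbb{N} \times \mathbb{N}$, gives $F(w_1 w_2) = (|w_1|_{B \cup C} + |w_2|_{B \cup C}, |w_1|_A + |w_2|_A) = F(w_1) + F(w_2)$. For part (ii), $F(w) = (0,0)$ means $|w|_{B \cup C} = 0$ and $|w|_A = 0$, i.e.\ $w$ contains no letters from $A \cup B \cup C$; since $w \in (A \cup B \cup C \cup Z)^+$ is non-empty, this happens precisely when every letter of $w$ lies in $Z$, that is, $w \in Z^+$.

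The substance is part (iii). Here I would take an arbitrary positive edge $\bbe = (w_1, r, +1, w_2)$ with $r = (r_{+1}, r_{-1}) \in R_0 \cup R_U \cup R_T$ (these are exactly the relations defining the edges of $\Gamma$), so $\iota \bbe \equiv w_1 r_{+1} w_2$ and $\tau \bbe \equiv w_1 r_{-1} w_2$. Using part (i), the inequality $F(\tau \bbe) < F(\iota \bbe)$ reduces to showing $F(r_{-1}) < F(r_{+1})$, since the contributions of $w_1$ and $w_2$ cancel under the ordering. I would then run through the relation families. For each rule, I must compare the $(B \cup C)$-count first, and only if those are equal compare the $A$-count. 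The key qualitative observation is that every relation in $R_0 \cup R_U \cup R_T$ either strictly reduces the number of $B \cup C$ letters, or keeps it fixed while strictly reducing the number of $A$ letters. For instance, the $R_0$ relations and the relations $c_\lambda b_i = p_{\lambda i}$, $eb_i = p_{1i}$, $c_\lambda e = p_{\lambda 1}$ (and their $\rho$-images, which land in $Z^+$ and so have $(B\cup C)$- and $A$-count contributions dropping) remove letters from $B \cup C$; whereas relations such as $ab_i = a p_{1i}$ or $c_\lambda a = p_{\lambda 1} a$ replace one $B \cup C$ letter by a word $p_{\lambda i} \in A^+$ over $A$, again strictly dropping the $(B \cup C)$-count.

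The main obstacle, and the step demanding the most care, is the exhaustive bookkeeping across all the relation types (4.3)–(4.13), particularly distinguishing the cases where the right-hand side is a $\rho$-image in $Z^+$ from those where it is a word like $a p_{1i}$ or $p_{\lambda 1} c_\mu$ still involving generators from $A \cup B \cup C$. I would organize this by splitting into two claims: in $R_0$ and in every $R_U, R_T$ relation whose right side contains strictly fewer $B \cup C$ letters than its left, the first coordinate strictly decreases; and in the remaining relations (where the $B \cup C$ count is unchanged) the $A$-count strictly decreases. Since the $p_{\lambda i}$ are fixed words in $A^+$, one must check their lengths do not spoil the count, but because the comparison uses the well-founded lexicographic order with $B \cup C$ dominating $A$, any relation that trades a $B \cup C$ letter for $A$ letters still decreases $F$ regardless of how many $A$ letters appear. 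Concluding in each case that $F(r_{-1}) < F(r_{+1})$, and hence $F(\tau \bbe) < F(\iota \bbe)$, completes the proof.
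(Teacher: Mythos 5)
Your proof is correct and follows essentially the same route as the paper: parts (i) and (ii) straight from the definitions, and part (iii) reduced via additivity and the translation-invariance of the lexicographic order to checking $F(r_{-1}) < F(r_{+1})$ for each relation $r \in R_0 \cup R_U \cup R_T$. Your case analysis of the relation families (4.3)--(4.13) just makes explicit the verification that the paper leaves to the reader.
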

\begin{proof}
Parts (i) and (ii) follow straight from the definitions. Part (iii) is proved by using part (i) and checking that (iii) holds for the edges $\bbe = (1,r,+1,1)$ where $r \in R_0 \cup R_U \cup R_T$. 
\end{proof}
\begin{lem}\label{Lemma5}
Let $\bbp_1, \bbp_2 \in P_+(\Gamma)$ with $\iota \bbp_1 \equiv \iota \bbp_2$.
\begin{enumerate}[(i)] 
\item If $\iota \bbp_1$ represents an element of $S \setminus T$ then there are paths $\bbp,\bbp' \in P_+(\Gamma)$ and $\bbq \in \overline{B^1 \cdot P(\Gamma_G) \cdot C^1}$ such that $\bbp_1 \bbp \bbq \sim_X \bbp_2 \bbp'$.
\item If $\iota \bbp_1$ represents an element of $T$ then there are paths $\bbp,\bbp' \in P_+(\Gamma)$ and $\bbq \in P(\Gamma_T)$ such that $\bbp_1 \bbp \bbq \sim_X \bbe_2 \bbp'$.
\end{enumerate}
\end{lem}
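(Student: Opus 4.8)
The plan is to argue by well-founded induction on $F(w)$, where $w \equiv \iota\bbp_1 \equiv \iota\bbp_2$, using the single-edge result Lemma~\ref{Lemma4} as a ``local confluence'' step and the strict descent $F(\tau\bbe) < F(\iota\bbe)$ of Lemma~\ref{FProp}(iii) to drive the induction. It is convenient first to reduce to the situation where $\bbp_1$ and $\bbp_2$ both terminate at quasi-normal forms: by Lemma~\ref{Lemma1}(i) I would fix positive paths $\bbp,\bbp'$ extending $\bbp_1,\bbp_2$ to words $n_1,n_2 \in B^1 A^* C^1$, so that if I can join $n_1$ to $n_2$ by some $\bbq \in \overline{B^1 \cdot P(\Gamma_G) \cdot C^1}$ with $\bbp_1 \bbp \bbq \sim_X \bbp_2 \bbp'$ then the lemma follows (case (ii) is identical with $Z^+$ and $P(\Gamma_T)$ in place of $B^1 A^* C^1$ and $\overline{B^1 \cdot P(\Gamma_G) \cdot C^1}$, using Lemma~\ref{Lemma1}(ii)). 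So the core statement to establish is: \emph{if $\bbp_1,\bbp_2 \in P_+(\Gamma)$ run from a common vertex $w$ representing $S\setminus T$ to quasi-normal forms $n_1,n_2$, then $\bbp_1 \bbq \sim_X \bbp_2$ for some $\bbq \in \overline{B^1 \cdot P(\Gamma_G) \cdot C^1}$ from $n_1$ to $n_2$.}

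For the base of the induction, if $\bbp_1$ is empty then $w = n_1$ is already quasi-normal, and no left-hand side in $R_0\cup R_U\cup R_T$ occurs as a factor of a word in $B^1 A^* C^1 \cup Z^+$; hence no positive edge of $\Gamma$ starts at $w$, forcing $\bbp_2$ empty as well, so $\bbq = 1_w$ works. For the inductive step I would assume both paths non-empty and write $\bbp_1 = \bbe_1 \overline{\bbp}_1$, $\bbp_2 = \bbe_2 \overline{\bbp}_2$ with $\bbe_1,\bbe_2$ positive edges out of $w$. Applying Lemma~\ref{Lemma4}(i) to $\bbe_1,\bbe_2$ produces positive paths $\bbr_1,\bbr_2$ and $\bbq_0 \in \overline{B^1 \cdot P(\Gamma_G)\cdot C^1}$ with $\bbe_1 \bbr_1 \bbq_0 \sim_X \bbe_2 \bbr_2$; crucially the endpoints of $\bbr_1,\bbr_2$ are $\iota\bbq_0,\tau\bbq_0 \in B^1 A^* C^1$, i.e.\ are quasi-normal. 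Since $F(\tau\bbe_1) < F(w)$ and $\tau\bbe_1$ again represents $S\setminus T$ (edges preserve the represented element), the induction hypothesis applied to $\overline{\bbp}_1,\bbr_1$ (both ending at quasi-normal forms, both starting at $\tau\bbe_1$) yields $\bbq_1 \in \overline{B^1\cdot P(\Gamma_G)\cdot C^1}$ with $\overline{\bbp}_1\bbq_1 \sim_X \bbr_1$, hence $\bbp_1\bbq_1 \sim_X \bbe_1\bbr_1$ after prepending $\bbe_1$ by (H3); symmetrically $\bbp_2\bbq_2 \sim_X \bbe_2\bbr_2$. Chaining gives $\bbp_1\bbq_1\bbq_0 \sim_X \bbe_1\bbr_1\bbq_0 \sim_X \bbe_2\bbr_2 \sim_X \bbp_2\bbq_2$, and cancelling $\bbq_2\bbq_2^{-1}$ via (H3)--(H4) gives $\bbp_1(\bbq_1\bbq_0\bbq_2^{-1}) \sim_X \bbp_2$. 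As $\overline{B^1\cdot P(\Gamma_G)\cdot C^1}$ is closed under composition and inversion, $\bbq := \bbq_1\bbq_0\bbq_2^{-1}$ is an admissible residual running from $n_1$ to $n_2$, which closes the step.

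Case (ii) would run along identical lines, replacing $B^1 A^* C^1$ by $Z^+$, the set $\overline{B^1\cdot P(\Gamma_G)\cdot C^1}$ by $P(\Gamma_T)$, and Lemma~\ref{Lemma4}(i) by Lemma~\ref{Lemma4}(ii) (which already absorbs Corollary~\ref{LastCorol}); here one uses that a word representing $T$ can only reduce to a quasi-normal form in $Z^+$, so the endpoints produced by Lemma~\ref{Lemma4}(ii) again lie in the correct normal-form set, and $P(\Gamma_T)$ is likewise closed under composition and inversion. I expect the main obstacle to be organisational rather than computational: one must arrange the induction on $F$ so that Lemma~\ref{Lemma4} is invoked \emph{only} at the first edge (where $F$ is largest) while the two tails are handled at strictly smaller $F$, and simultaneously check that every residual produced stays inside $\overline{B^1\cdot P(\Gamma_G)\cdot C^1}$ (resp.\ $P(\Gamma_T)$) and that the vertices at which these residuals are spliced in are genuinely quasi-normal. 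The descent property $F(\tau\bbe)<F(\iota\bbe)$ and the closure of the two distinguished path-sets are precisely what make both of these points go through.
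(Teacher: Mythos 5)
Your proposal is correct and takes essentially the same route as the paper's proof: well-founded induction on $F(\iota \bbp_1)$, splitting off the first edges $\bbe_1, \bbe_2$, resolving them with Lemma~\ref{Lemma4}, applying the inductive hypothesis to the two tails, and chaining the residual paths using closure of $\overline{B^1 \cdot P(\Gamma_G) \cdot C^1}$ (resp.\ $P(\Gamma_T)$) under composition and inversion. Your preliminary normalisation of endpoints via Lemma~\ref{Lemma1} is only an organisational variant of the paper's argument; it makes explicit (through the observation that no positive edge of $\Gamma$ leaves a word in $B^1 A^* C^1 \cup Z^+$) the fact the paper uses implicitly when the trailing positive paths supplied by the inductive hypothesis are suppressed.
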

\begin{proof}
For $m,n \in \mathbb{N}$, let $R(m,n)$ denote the following statement:
\begin{quote}
$R(m,n)$: Lemma~\ref{Lemma5} holds for all paths $\bbp_1$ and $\bbp_2$ such that $F(\iota \bbp_1) = (m,n)$. 
\end{quote}
We shall prove $R(m,n)$ for all $n,m \in \mathbb{N}$ by well-founded induction. First note that $R(0,0)$ holds trivially. Now we shall prove 
\begin{quote}
For any $m,n \in \mathbb{N}$, if $R(x,y)$ holds for all $(x,y) < (m,n)$ then $R(m,n)$ holds. 
\end{quote} 
First consider part (i). Let $\bbp_1, \bbp_2 \in P_+(\Gamma)$ with $\iota \bbp_1 \equiv \iota \bbp_2$ and where $\iota \bbp_1$ represents an element of $S \setminus T$. If $\iota \bbp_1$ belongs to $B^1 A^* C^1$ then the result holds trivially, so suppose otherwise. Also, if one of $\bbp_1$ or $\bbp_2$ is the empty path then the result is easily seen to be true by applying Lemma~\ref{Lemma1}, so suppose not. So we can write $\bbp_1 = \bbe_1 \bbp_1'$ and $\bbp_2 = \bbe_2 \bbp_2'$ where $\bbe_i$ are edges in $P_+(\Gamma)$ and $\bbp_1'$ and  $\bbp_2'$ both belong to $P_+(\Gamma)$. By Lemma~\ref{Lemma4}(i) there are  paths $\bbp_3, \bbp_4 \in \mathcal{P}_+(\Gamma)$ and $\bbq' \in \overline{B^1 \cdot P(\Gamma_G) \cdot C^1}$ such that 
$
\bbe_1 \bbp_3 \bbq' \sim_X \bbe_2 \bbp_4.
$
Now $\bbp_1', \bbp_3 \in P_+(\Gamma)$, $\iota \bbp_1' \equiv \iota \bbp_3$ and by Lemma~\ref{FProp} we have $F(\iota \bbp_1') < F(\iota \bbp_1)$. So by the inductive hypothesis there exist paths $\bbp_1'' \in P_+(\Gamma)$ and $\bbq_1 \in \overline{B^1 \cdot P(\Gamma_G) \cdot C^1}$ such that
$
\bbp_1' \bbp_1'' \bbq_1 \sim_X \bbp_3.
$
Likewise there exist paths $\bbp_2'' \in P_+(\Gamma)$ and $\bbq_2 \in \overline{B^1 \cdot P(\Gamma_G) \cdot C^1}$ such that
$
\bbp_4 \bbq_2 \sim \bbp_2' \bbp_2''.
$
Combining these observations together, we have found $\bbp_1'', \bbp_2'' \in P_+(\Gamma)$, and $\bbq_1 \bbq' \bbq_2 \in \overline{B^1 \cdot P(\Gamma_G) \cdot C^1}$ such that
\[
\bbp_1 \bbp_1'' \bbq_1 \bbq' \bbq_2 \sim_X \bbp_2 \bbp_2'',
\]
as required.

The proof of part (ii) is done in exactly the same way as the proof of part (i) but by applying Lemma~\ref{Lemma4}(ii).
\end{proof}

The following corollary is now immediate. 

\begin{cor}\label{Corollary}
Let $\bbp_1, \bbp_2 \in P_+(\Gamma)$ with $\iota \bbp_1 \equiv \iota \bbp_2$.
\begin{enumerate}[(i)]
\item If $\iota \bbp_1$ represents an element of $S \setminus T$ and $\tau \bbp_1 , \tau \bbp_2  \in B^1 A^* C^1$ then there is a path $\bbq \in \overline{B^1 \cdot P(\Gamma_G) \cdot C}$ such that $\bbp_1 \sim_X \bbp_2 \bbq$.
\item If $\iota \bbp_1$ represents an element of $T$ and $\tau \bbp_1 , \tau \bbp_2  \in Z^+$ then there is a path $\bbq \in P(\Gamma_T)$ such that $\bbp_1 \sim_X \bbp_2 \bbq$.
\end{enumerate}
\end{cor}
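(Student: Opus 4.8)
The plan is to read the Corollary directly off Lemma~\ref{Lemma5} by exploiting the extra hypothesis that the terminal vertices already lie in quasi-normal form. The only new ingredient needed is the observation that a word in $B^1 A^* C^1 \cup Z^+$ is \emph{terminal} for positive rewriting in $\Gamma$: no such word is the initial vertex of an edge of $P_+(\Gamma)$. This is the converse of the statement used in the proof of Lemma~\ref{Lemma1}(i), and it follows by inspecting the left-hand sides of the relations in $R_0 \cup R_U \cup R_T$. Indeed, a word $b_i w c_\lambda \in B^1 A^* C^1$ has its only $B$-letter at the front and its only $C$-letter at the end, so none of the patterns $e b_i$, $a b_i$, $c_\lambda e$, $c_\lambda a$, $c_\lambda b_i$, $c_\lambda c_\mu$, $b_i b_j$ occurs as a subword; it contains no letter of $Z$, so no subword $zx$ or $xz$ occurs either; and a word in $Z^+$ likewise contains no left-hand side of $R_0 \cup R_U \cup R_T$. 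Consequently any $\bbp \in P_+(\Gamma)$ whose initial vertex lies in $B^1 A^* C^1 \cup Z^+$ must be the empty path.

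With this in hand, for part (i) I would apply Lemma~\ref{Lemma5}(i) with the roles of $\bbp_1$ and $\bbp_2$ interchanged, which is legitimate since $\iota \bbp_2 \equiv \iota \bbp_1$ represents an element of $S \setminus T$. This yields paths $\bbp, \bbp' \in P_+(\Gamma)$ and a path $\bbq \in \overline{B^1 \cdot P(\Gamma_G) \cdot C^1}$ with $\bbp_2 \bbp \bbq \sim_X \bbp_1 \bbp'$. Now $\iota \bbp \equiv \tau \bbp_2 \in B^1 A^* C^1$ and $\iota \bbp' \equiv \tau \bbp_1 \in B^1 A^* C^1$ by hypothesis, so by the observation above both $\bbp$ and $\bbp'$ are empty. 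Hence $\bbp_2 \bbq \sim_X \bbp_1$, that is, $\bbp_1 \sim_X \bbp_2 \bbq$ with $\bbq \in \overline{B^1 \cdot P(\Gamma_G) \cdot C^1}$, as required. Part (ii) is identical after replacing $B^1 A^* C^1$ by $Z^+$ and $\overline{B^1 \cdot P(\Gamma_G) \cdot C^1}$ by $P(\Gamma_T)$, and invoking Lemma~\ref{Lemma5}(ii).

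There is no serious obstacle here: the content is entirely carried by Lemma~\ref{Lemma5}, and the Corollary is just the special case in which the ``tail'' correction paths $\bbp, \bbp'$ produced by that lemma are forced to vanish. The only points requiring care are the bookkeeping of which path receives the extra factor $\bbq$ (handled by swapping $\bbp_1$ and $\bbp_2$ before applying the lemma, so that $\bbq$ ends up attached to $\bbp_2$ rather than to $\bbp_1$), and the terminality claim for quasi-normal forms, which is the routine subword inspection described above and is already implicit in the discussion following Lemma~\ref{FProp}.
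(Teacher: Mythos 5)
Your proof is correct and matches the paper's intent: the paper states the corollary is immediate from Lemma~\ref{Lemma5}, and your argument (apply Lemma~\ref{Lemma5} with the roles of $\bbp_1$ and $\bbp_2$ swapped, then note that words in $B^1A^*C^1 \cup Z^+$ admit no outgoing positive edges in $\Gamma$, forcing the correction paths $\bbp$, $\bbp'$ to be empty) is exactly the routine verification being left to the reader. The subword inspection you give for terminality of quasi-normal forms is the right justification, since the sentence preceding Lemma~\ref{FProp} in the paper only asserts the converse implication.
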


\begin{lem}\label{Lemma6}
Let $\bbp$ be any path in $\Gamma_S$.
\begin{enumerate}[(i)]
\item If $\iota \bbp$ represents an element of $S \setminus T $ 
then there exist paths $\bbp_1', \bbp_m \in P_+(\Gamma)$ 
and $\bbq \in \overline{B^1 \cdot P(\Gamma_G) \cdot C^1}$ such that $\bbp \sim_X \bbp_1' \bbq \bbp_m^{-1}$.
\item If $\iota \bbp$ represents an element of $T$ then there exist paths $\bbp_1', \bbp_m \in P_+(\Gamma)$ and  $\bbq \in P(\Gamma_T)$ such that $\bbp \sim_X \bbp_1' \bbq \bbp_m^{-1}$.
\end{enumerate}
\end{lem}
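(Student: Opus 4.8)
The plan is to induct on the number $m$ of edges of $\bbp$, using the single-edge decomposition of Lemma~\ref{Lemma3} to split off the ``core'' of each edge, and then using Corollary~\ref{Corollary} to absorb the positive paths (the ``joints'') that appear between consecutive edges. Throughout, I would first record the basic observation that every edge of $\Gamma_S$ joins two words representing the same element of $S$, since an edge applies a defining relation; hence all vertices of $\bbp$ represent one fixed element of $S$. Consequently whether we are in case (i) or case (ii) is fixed once and for all by $\iota \bbp$, and at no point do the two cases mix. I would treat case (i) in detail; case (ii) is word-for-word the same with $\overline{B^1 \cdot P(\Gamma_G) \cdot C^1}$ replaced by $P(\Gamma_T)$ and Lemma~\ref{Lemma1}(i) replaced by Lemma~\ref{Lemma1}(ii).

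For the base cases: if $\bbp$ is empty, say $\bbp = 1_w$, then Lemma~\ref{Lemma1}(i) gives $\bbp_0 \in P_+(\Gamma)$ from $w$ to some $w' \in B^1 A^* C^1$, and taking $\bbp_1' = \bbp_m = \bbp_0$ and $\bbq = 1_{w'}$, axiom (H4) yields $\bbp_0\, 1_{w'}\, \bbp_0^{-1} \sim 1_w = \bbp$. The one-edge case $m = 1$ is exactly Lemma~\ref{Lemma3}(i).

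For the inductive step I would write $\bbp = \bbp^{\ast}\bbe$ with $\bbp^{\ast}$ a path of $m-1$ edges and $\bbe$ an edge; note $\iota\bbp^{\ast} \equiv \iota\bbp$, so the inductive hypothesis applies in case (i), giving $\bbp^{\ast} \sim_X \bbp_1'\, \bbq^{\ast}\, \bbp_k^{-1}$ with $\bbp_1', \bbp_k \in P_+(\Gamma)$ and $\bbq^{\ast} \in \overline{B^1 \cdot P(\Gamma_G) \cdot C^1}$. By Lemma~\ref{Lemma3}(i), $\bbe \sim_X \bbp_3\, \bbq^{\ast\ast}\, \bbp_4^{-1}$ with $\bbp_3, \bbp_4 \in P_+(\Gamma)$ and $\bbq^{\ast\ast} \in \overline{B^1 \cdot P(\Gamma_G) \cdot C^1}$. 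Since $\iota\bbp_k \equiv \tau\bbp^{\ast} \equiv \iota\bbe \equiv \iota\bbp_3$, the paths $\bbp_k$ and $\bbp_3$ both lie in $P_+(\Gamma)$, start at the same vertex, and terminate in $B^1 A^* C^1$. Composing the two decompositions and applying (H3) gives
\[
\bbp \sim_X \bbp_1'\, \bbq^{\ast}\, (\bbp_k^{-1}\bbp_3)\, \bbq^{\ast\ast}\, \bbp_4^{-1}.
\]
Corollary~\ref{Corollary}(i) applied to $\bbp_k, \bbp_3$ produces a path $\bbr \in \overline{B^1 \cdot P(\Gamma_G) \cdot C^1}$ with $\bbp_k \sim_X \bbp_3\,\bbr$, whence $\bbp_k^{-1}\bbp_3 \sim_X \bbr^{-1}\bbp_3^{-1}\bbp_3 \sim \bbr^{-1}$ by (H4). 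Substituting yields $\bbp \sim_X \bbp_1'\,(\bbq^{\ast}\bbr^{-1}\bbq^{\ast\ast})\,\bbp_4^{-1}$, so I set $\bbp_m = \bbp_4$ and $\bbq = \bbq^{\ast}\bbr^{-1}\bbq^{\ast\ast}$.

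The only point requiring genuine care — and hence the main obstacle — is verifying that $\bbq = \bbq^{\ast}\bbr^{-1}\bbq^{\ast\ast}$ really lies in $\overline{B^1 \cdot P(\Gamma_G) \cdot C^1}$. This is where I would spend the attention: the class $\overline{B^1 \cdot P(\Gamma_G) \cdot C^1}$ is closed under inversion (inverting an edge merely flips $\epsilon$, and $R \cup R_e$ is stable under this) and under composition of composable paths, because in a composable chain of core paths the leading $B^1$-letter and trailing $C^1$-letter cannot be altered by relations from $R \cup R_e$, forcing all vertices into a single set $b A^* c \cup \{bc\}$ and hence the chain into $\overline{b \cdot P(\Gamma_G) \cdot c}$ for fixed $b, c$. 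In case (ii) the corresponding closure of $P(\Gamma_T)$ is immediate, as $\Gamma_T$ is a subgraph. Beyond this closure check, the remaining work is purely the bookkeeping of matching endpoints at each composition, which the fixed dichotomy (all vertices representing a single element of $S$) keeps consistent.
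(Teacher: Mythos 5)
Your proof is correct and takes essentially the same approach as the paper's: both treat $\bbp$ edge by edge, apply Lemma~\ref{Lemma3} to each edge, and use Corollary~\ref{Corollary} to absorb the positive paths joining consecutive edge-decompositions, with your induction on the number of edges being merely a repackaging of the paper's simultaneous splicing $\bbp \sim_X \bbp_1' \bbq_1 \bbp_1^{-1} \bbp_2' \bbq_2 \bbp_2^{-1} \cdots \bbp_m' \bbq_m \bbp_m^{-1}$. The one small divergence is that the paper applies the corollary in the orientation $\bbp_{i+1}' \sim_X \bbp_i \bbq_i'$, so the glued path $\bbq_1 \bbq_1' \bbq_2 \bbq_2' \cdots$ needs no inverted factors, whereas your orientation produces $\bbr^{-1}$ and hence requires the closure of $\overline{B^1 \cdot P(\Gamma_G) \cdot C^1}$ under inversion as well as composition --- a point you correctly verify and which the paper leaves implicit.
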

\begin{proof}
(i) If $\bbp$ is empty then the result holds trivially by Lemma~\ref{Lemma1}, so suppose otherwise. 
Let $\bbp = \bbe_1 \bbe_2 \ldots \bbe_m$, where each $\bbe_i$ is an edge of $\Gamma_S$. By Lemma~\ref{Lemma3}(i) there are paths $\bbp_i, \bbp_i' \in P_+(\Gamma)$ and $\bbq_i \in \overline{B^1 \cdot P(\Gamma_G) \cdot C^1}$, such that $\tau \bbp_i, \tau \bbp_i' \in B^1 A^* C^1$ and $\bbe_i \sim_X \bbp_i' \bbq_i \bbp_i^{-1}$ for $i=1,\ldots,m$. Hence
\[
\bbp \sim_X
\bbp_1' \bbq_1 \bbp_1^{-1}
\bbp_2' \bbq_2 \bbp_2^{-1}
\ldots
\bbp_m' \bbq_m \bbp_m^{-1}.
\]
Since $\iota \bbp_i \equiv \iota \bbp_{i+1}'$ and $\tau \bbp_i, \tau \bbp_{i+1}' \in B^1 A^* C^1$, by Corollary~\ref{Corollary}(i) there is a path $\bbq_i' \in \overline{B^1 \cdot \mathcal{P}(\Gamma_G) \cdot C^1}$ such that $\bbp_{i+1}' \sim_X \bbp_i \bbq_i'$ so that $\bbp_i^{-1} \bbp_{i+1}' \sim_X \bbq_i'$. Thus
\[
\bbp \sim_X
\bbp_1'
\bbq_1 \bbq_1'
\bbq_2 \bbq_2'
\ldots
\bbq_{m-1} \bbq_{m-1}'
\bbp_m^{-1},
\]
as required.

The proof of part (ii) is done in exactly the same way as part (i), except that we apply 
Lemma~\ref{Lemma3}(ii)
and
Corollary~\ref{Corollary}(ii). \end{proof}

\noindent \textbf{The parallel paths $\mathbf X_e$.} Let $X_e$ denote the following set of parallel paths of $\Gamma_S$ 
\[
\{ (\ (b_i, e c_{\lambda} = c_{\lambda}, +1, 1), \  (1, b_i e = b_i, +1, c_{\lambda}) \ ) : i \in I \setminus \{ 1 \}, \lambda \in \Lambda \setminus \{ 1 \} \}.
\]
Note that $X_e$ is finite when both $I$ and $\Lambda$ are finite.

\begin{lem}\label{newlemma}
Let $\bbq$ be a path in $\overline{B^1 \cdot P(\Gamma_G) \cdot C^1}$. 
\begin{enumerate}[(i)]
\item If neither $\iota \bbq$ nor $\tau \bbq$ belongs to $B \cup C \cup BC$ then there exists a path $\bbq'$ in $B^1 \cdot P(\Gamma_G) \cdot C^1$ such that $\bbq \sim_{X_e} \bbq'$.
\item If both $\iota \bbq$ and $\tau \bbq$ belong to $B \cup C \cup BC$ then it follows that $\iota \bbq \equiv \tau \bbq$, and there exists a path $\bbq'$ in $B^1 \cdot P(\Gamma_G) \cdot C^1$ and edges $\bbe_1$, $\bbe_2$ of $\overline{B^1 \cdot P(\Gamma_G) \cdot C^1}$ such that $\bbq \sim_{X_e} \bbe_1 \bbq' \bbe_2$ and $\bbe_1 \sim_{X_e} \bbe_2^{-1}$.  
\end{enumerate}
\end{lem}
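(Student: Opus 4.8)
The plan is to fix the pair $b \in B^1$, $c \in C^1$ for which $\bbq$ lies in $\overline{b \cdot P(\Gamma_G) \cdot c}$ (this pair is determined by the first and last letters of any vertex of $\bbq$) and to analyse $\bbq$ through its visits to the single word $d \equiv bc$, which is the unique vertex of $\overline{b \cdot P(\Gamma_G) \cdot c}$ lying in $B \cup C \cup BC$ and the only one whose $A$-block is empty. First I would record the elementary classification of the edges of $\bbq$: an edge whose relation lies in $R$ acts strictly inside the $A$-block and keeps it non-empty, so after stripping $b$ and $c$ it is an edge of $\Gamma_G$; an edge whose relation lies in $R_e$ acts at a boundary, and the only $R_e$-edges incident with $d$ join it to the word $bec$ (that is $b_i e c_\lambda$, or $b_i e$, or $e c_\lambda$, according to which of $b,c$ is trivial). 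When $b \neq 1$ and $c \neq 1$ there are exactly two such edges at $d$, coming from the relations $b_i e = b_i$ and $e c_\lambda = c_\lambda$, and $X_e$ is precisely the assertion that they are homotopic; when one of $b,c$ equals $1$ there is a single such edge.

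The heart of the argument is an excision step, carried out by induction on the number of interior occurrences of $d$ in $\bbq$. Suppose $\bbq \equiv \bbq_1 \, \bbe_{\mathrm{in}} \, \bbe_{\mathrm{out}} \, \bbq_2$ with $\tau \bbe_{\mathrm{in}} \equiv \iota \bbe_{\mathrm{out}} \equiv d$. Then $\bbe_{\mathrm{in}}$ and $\bbe_{\mathrm{out}}^{-1}$ are both edges from $bec$ to $d$, so each is one of the (one or two) boundary edges described above. If they coincide---which is automatic when $b = 1$ or $c = 1$---then $\bbe_{\mathrm{in}} \bbe_{\mathrm{out}} \sim 1_{bec}$ by (H4); otherwise they are the two distinct boundary edges and $X_e$ gives $\bbe_{\mathrm{in}} \sim_{X_e} \bbe_{\mathrm{out}}^{-1}$, whence $\bbe_{\mathrm{in}} \bbe_{\mathrm{out}} \sim_{X_e} \bbe_{\mathrm{out}}^{-1} \bbe_{\mathrm{out}} \sim 1_{bec}$ by (H4) again. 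In either case $\bbq \sim_{X_e} \bbq_1 \bbq_2$, which has one fewer interior visit to $d$; iterating removes all of them.

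With the excision done I would finish the two parts separately. For (i) the hypothesis says neither endpoint lies in $B \cup C \cup BC$, so neither endpoint is $d$; since all interior visits to $d$ have been removed, the reduced path avoids $d$ entirely, hence has all of its vertices in $b A^+ c$ and, because the only $R_e$-edges available join $d$ to $bec$, uses no $R_e$-edge at all. It is therefore a path $\bbq'$ in $B^1 \cdot P(\Gamma_G) \cdot C^1$, as required. For (ii) both endpoints lie in $B \cup C \cup BC$; as $d \equiv bc$ is the only vertex of $\overline{b \cdot P(\Gamma_G) \cdot c}$ in that set, both endpoints must equal $d$, giving $\iota \bbq \equiv \tau \bbq$. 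Writing the reduced path as $\bbe_1 \, \bbr \, \bbe_2$, where $\bbe_1$ is its first edge (leaving $d$) and $\bbe_2$ its last (entering $d$), the middle portion $\bbr$ runs from $bec$ to $bec$ without meeting $d$, so by the argument of part (i) it is homotopic to a path $\bbq' \in B^1 \cdot P(\Gamma_G) \cdot C^1$; finally $\bbe_1$ and $\bbe_2^{-1}$ are both boundary edges from $d$ to $bec$, so $\bbe_1 \sim_{X_e} \bbe_2^{-1}$ exactly as in the excision step (trivially if $b$ or $c$ is $1$, and by $X_e$ otherwise).

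The step I expect to require the most care is the corner bookkeeping: one must organise a single induction while distinguishing the cases $b = 1$, $c = 1$ and $b,c \neq 1$, making sure that the correct tool---bare (H4) when the corner $d$ carries one boundary edge, and $X_e$ when it carries two---is invoked in each. The other point to verify carefully is that, once the visits to $d$ are excised, no $R_e$-edge survives, so that the output genuinely lands in the un-barred set $B^1 \cdot P(\Gamma_G) \cdot C^1$ rather than merely in $\overline{B^1 \cdot P(\Gamma_G) \cdot C^1}$; this rests on confirming that the $R_e$-edges one must retain are exactly those joining $d$ to $bec$, which is where the precise shape of the relations $b_i e = b_i$ and $e c_\lambda = c_\lambda$ enters.
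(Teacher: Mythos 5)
Your proposal follows the paper's own proof essentially step for step: you identify that the only edges of $\overline{b \cdot P(\Gamma_G) \cdot c}$ incident to the corner word $d \equiv bc$ are the (one or two) positive $R_e$-edges joining $bec$ to $d$; you observe that every visit of $\bbq$ to $d$ must therefore backtrack immediately; you excise each backtrack, by (H4) alone when the incoming and outgoing edges are mutually inverse, and by the $X_e$-pair together with (H4) otherwise; and for part (ii) you peel off the first and last edges and apply the part (i) argument to the middle. This is exactly the paper's argument, organised as an induction on interior visits to $d$ rather than as repeated removal of corner-terminating edges; the only item the paper treats that you omit is the trivial case where $\bbq$ is the empty path at $bc$ in part (ii), handled by taking $\bbe_1$ to be any edge leaving $bc$ and $\bbe_2 = \bbe_1^{-1}$.

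One warning about the step you yourself single out as needing care. You claim that after excision no $R_e$-edge survives because ``the only $R_e$-edges available join $d$ to $bec$,'' and that this can be confirmed from the shape of the relations $b_ie=b_i$ and $ec_\lambda = c_\lambda$. Under the paper's literal definition of $\overline{b \cdot P(\Gamma_G) \cdot c}$ (vertices in $bA^*c \cup \{bc\}$, relations in $R \cup R_e$) this confirmation is impossible: for instance $(1, b_ie = b_i, +1, ac_\lambda)$ with $a \in A$ is a legitimate edge of $\overline{b_i \cdot P(\Gamma_G) \cdot c_\lambda}$, joining $b_ieac_\lambda$ to $b_iac_\lambda$, which never meets $B \cup C \cup BC$. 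Moreover such an edge genuinely cannot be eliminated: the signed number of $R_e$-edges in a path is the same on both sides of every generating pair of $\sim_{X_e}$ (each member of an $X_e$-pair is a single positive $R_e$-edge, and instances of (H1) and (H4) preserve the count), so it is constant on $\sim_{X_e}$-classes, yet it vanishes on every path of $B^1 \cdot P(\Gamma_G) \cdot C^1$. You should know, however, that the paper's own proof makes exactly the same unsupported assertion in contrapositive form (``otherwise there exists an edge $\bbe$ of the path $\bbq$ with $\tau\bbe \in B \cup C \cup BC$''), so your argument is precisely as complete as the paper's: both become valid once the bar set is read in the evidently intended restricted sense, namely that $R_e$-edges occur only where they are incident to the corner $bc$ (equivalently, once the fixed paths $\bbq_{\bbe}$ and $\bbq_{\bbe_1,\bbe_2}$ chosen in the definitions of $X_2$ and $X_3$ are taken of this restricted form).
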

\begin{proof}

(i) Suppose that neither $\iota \bbq$ nor $\tau \bbq$ belongs to $B \cup C \cup BC$. 
If $\bbq$ is empty or is already a path in $B^1 \cdot P(\Gamma_G) \cdot C^1$ then the result is trivial. Otherwise there exists an edge $\bbe$ of the path $\bbq$ with $\tau \bbe \in B \cup C \cup BC$. If $\tau \bbe \in B \cup C$ then the edge $\bbe$ must be of the form $(1, b_i e = b_i, +1, 1)$ or $(1, ec_{\lambda} = c_{\lambda}, +1, 1)$ for some $i \in I \setminus \{ 1 \}$, $\lambda \in \Lambda \setminus \{ 1 \}$. By consideration of the relations in the presentation $\mathcal{P}_S$ it follows that in the path $\bbq$ the edge $\bbe$ must be immediately followed by its inverse $\bbe^{-1}$. On the other hand, if $\tau \bbe \in BC$ then the edge $\bbe$ equals either $\bbe_c = (b_i, ec_{\lambda}, +1, 1)$ or $\bbe_b = (1, b_i e = b_i, +1, c_{\lambda})$, for some $i \in I \setminus \{ 1 \}$, $\lambda \in \Lambda \setminus \{ 1 \}$. In this case $\tau \bbe \equiv b_i c_{\lambda}$ and in the path $\bbq$ the edge $\bbe$ must be immediately followed by $\bbe_c^{-1}$ or $\bbe_b^{-1}$. Since $(\bbe_c, \bbe_b) \in X_e$ it follows that 
in any of these situations we can remove $\bbe$ and the edge that follows it from the path $\bbq$ to obtain a shorter path that is $\sim_{X_e}$ related to the original. Repeating this process, after a finite number of steps we obtain the desired path $\bbq'$ from $B^1 \cdot P(\Gamma_G) \cdot C^1$ with $\bbq \sim_{X_e} \bbq'$. 

(ii) Suppose that $\iota \bbq, \tau \bbq \in B \cup C \cup BC$. The fact that $\iota \bbq \equiv \tau \bbq$ follows from Lemma~\ref{dagger}. If $\bbq$ is empty then the result holds trivially by setting $\bbe_1$ to be any edge with initial vertex $\iota \bbq$ and terminal vertex in $BA^+$ (or $CA^+$ or $B A^+ C$ depending on the value of $\iota \bbq$) and letting $\bbe_2 = \bbe_1^{-1}$. Now suppose that $\bbq$ is not empty, and let $\bbe$ be its first edge. Now $\bbq$ is a closed path, and $\iota \bbq$ belongs either to $B$, $C$ or $BC$. In the first two cases there is only one edge in $\overline{B^1 \cdot P(\Gamma_G) \cdot C^1}$ with initial vertex $\iota \bbe_1$ and hence since $\bbq$ is closed, the last edge of $\bbq$ must be $\bbe_1^{-1}$. Then the result follows by applying part (i). 

On the other hand, if $\iota \bbe_1 \in BC$, then the last edge $\bbe_2$ of $\bbq$ satisfies $\tau \bbe_1 \equiv \iota \bbe_2 \equiv b_i e c_{\lambda}$, for some $i \in I$, $\lambda \in \Lambda$, and either $\bbe_1 = \bbe_2^{-1}$ or $(\bbe_1, \bbe_2) \in X_e$. Now the result follows by applying part (i).  \end{proof}

We are now in a position to complete the proof of Theorem~\ref{comp0simpExt}. With the definitions given above we define $X = X_1 \cup X_1' \cup X_2 \cup X_3 \cup X_e \cup X_G \cup X_T$ where $X_G$ is a homotopy base for the presentation $\lb A | R \rb$ of $G$, and $X_T$ is a homotopy base for the presentation $\lb Z | Q \rb$ of $T$. Here $X_G$ is a set of parallel paths in the subgraph $\Gamma_G$ of $\Gamma_S$, and $X_T$ is a set of parallel paths in the subgraph $\Gamma_T$ of $\Gamma_S$.    

The proof of Theorem~\ref{comp0simpExt} is then wrapped up with the following lemma.

\begin{lem}\label{wrappingup}
The set $X$ is a homotopy base for the presentation $\mathcal{P}_S$ of $S$.  
\end{lem}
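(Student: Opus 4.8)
The goal is to show that $X = X_1 \cup X_1' \cup X_2 \cup X_3 \cup X_e \cup X_G \cup X_T$ generates all of $\parallel$ for $\Gamma_S$. Let me think about what machinery is already available and how to assemble it.

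We have a sequence of lemmas that progressively normalize paths. Lemma~\ref{Lemma6} is the key workhorse: it says any path $\bbp$ in $\Gamma_S$ is $\sim_X$-equivalent to $\bbp_1' \bbq \bbp_m^{-1}$ where $\bbp_1', \bbp_m \in P_+(\Gamma)$ are positive rewriting paths into quasi-normal form, and the "middle" $\bbq$ lives either in $\overline{B^1 \cdot P(\Gamma_G) \cdot C^1}$ (if $\iota\bbp$ represents an element of $S\setminus T$) or in $P(\Gamma_T)$ (if it represents an element of $T$). So the standard strategy applies: to show $X$ is a homotopy base, it suffices to show that every CLOSED path $C$ in $\Gamma_S$ satisfies $C \sim_X 1_{\iota C}$.

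So the plan is as follows. Take an arbitrary closed path $C$, so $\iota C \equiv \tau C$. Apply Lemma~\ref{Lemma6} to get $C \sim_X \bbp_1' \bbq \bbp_m^{-1}$ with $\bbp_1', \bbp_m \in P_+(\Gamma)$ and $\iota\bbp_1' \equiv \iota\bbp_m \equiv \iota C$ (since the construction in Lemma~\ref{Lemma6} builds $\bbp_1'$ and $\bbp_m$ out of the endpoints of $C$). I would first handle the case where $\iota C$ represents an element of $T$: here $\bbq \in P(\Gamma_T)$, and $\tau\bbp_1', \tau\bbp_m \in Z^+$. The whole path $\bbp_1' \bbq \bbp_m^{-1}$ is now a closed path whose "interior" is genuinely inside $\Gamma_T$ once we strip off the positive rewriting legs. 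Using Corollary~\ref{Corollary}(ii) I can replace $\bbp_m$ by $\bbp_1'$ up to a $\Gamma_T$-correction, so that $C \sim_X \bbp_1' \bbq' (\bbp_1')^{-1}$ with $\bbq'$ a CLOSED path in $\Gamma_T$; then since $X_T$ is a homotopy base for $\Gamma_T$, we have $\bbq' \sim_{X_T} 1$, and peeling off $\bbp_1'(\bbp_1')^{-1} \sim 1$ via (H4) finishes this case.

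For the main case where $\iota C$ represents an element of $S \setminus T$, the middle path $\bbq$ lies in $\overline{B^1 \cdot P(\Gamma_G) \cdot C^1}$. Here I would first invoke Lemma~\ref{newlemma} to push $\bbq$ into the cleaner set $B^1 \cdot P(\Gamma_G) \cdot C^1$ (modulo $X_e$ and, in the degenerate case where the endpoints sit in $B \cup C \cup BC$, modulo a pair of mutually inverse boundary edges $\bbe_1, \bbe_2$ that cancel by (H4)). Once $\bbq$ has the form $b \cdot \bbr \cdot c$ with $\bbr \in P(\Gamma_G)$ and fixed $b \in B^1$, $c \in C^1$, I again use Corollary~\ref{Corollary}(i) to align the two positive legs, reducing $C$ to $\bbp_1' (b \cdot \bbr' \cdot c)(\bbp_1')^{-1}$ where $b \cdot \bbr' \cdot c$ is a CLOSED path; since $\bbr'$ is then a closed path in $\Gamma_G$ and $X_G$ is a homotopy base for $\Gamma_G$, conditions (H2)--(H3) let me write $b \cdot \bbr' \cdot c \sim_{X_G} 1$, and (H4) removes the outer $\bbp_1'(\bbp_1')^{-1}$.

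The main obstacle I anticipate is the bookkeeping around Corollary~\ref{Corollary}: applying it requires that the two positive paths $\bbp_1'$ and $\bbp_m$ share the same initial vertex (which they do, both being $\iota C$) and that their terminal vertices land in the appropriate normal-form set — which is guaranteed by Lemma~\ref{Lemma6}, but one must be careful that the resulting correction path $\bbq$ produced by the corollary can be absorbed on the correct side so that the interior becomes genuinely closed. A subtler point is the degenerate subcase of Lemma~\ref{newlemma}(ii), where $\iota\bbq \equiv \tau\bbq \in B \cup C \cup BC$: the boundary edges $\bbe_1, \bbe_2$ introduced there are not themselves in $P(\Gamma_G)$, so I must verify that $\bbe_1 \sim_{X_e} \bbe_2^{-1}$ (which the lemma supplies) lets them cancel cleanly via (H4) after conjugation, rather than leaving stray edges. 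Once these alignment and cancellation steps are checked, the fact that $X_G$ and $X_T$ are homotopy bases for their respective subgraphs does all the remaining work, and the finiteness claim follows since every constituent set of $X$ was shown to be finite under the stated hypotheses.
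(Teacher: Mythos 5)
Your proposal is correct and follows essentially the same route as the paper's own proof: normalize via Lemma~\ref{Lemma6}, align the positive legs via Corollary~\ref{Corollary}, and collapse the resulting middle path in $\overline{B^1 \cdot P(\Gamma_G) \cdot C^1}$ (resp.\ $P(\Gamma_T)$) using Lemma~\ref{newlemma} together with the fact that $X_G$ (resp.\ $X_T$) is a homotopy base, with the degenerate $B \cup C \cup BC$ endpoints handled exactly as you describe. The only difference is organizational: you first reduce to closed paths and apply the machinery once, whereas the paper works with a pair of parallel paths directly and applies Lemma~\ref{Lemma6} and Corollary~\ref{Corollary} to each; its Claim then forms and collapses precisely the closed middle path $\bbq_1\bbq_4^{-1}\bbq_2^{-1}\bbq_3^{-1}$ that appears in your argument.
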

\begin{proof}
Let $\bbp_1, \bbp_2 \in P(\Gamma_S)$ with $(\bbp_1,\bbp_2) \in \parallel$. We must prove that $\bbp_1 \sim_X \bbp_2$. There are two main cases depending on whether or not $\iota \bbp_1$ represents an element of $T$.

First suppose that  $\iota \bbp_1$ represents an element of $S \setminus T$. By Lemma~\ref{Lemma6} there are paths $\bbp_1', \bbp_2' \in P_+(\Gamma)$, $\bbp_1'', \bbp_2'' \in P_-(\Gamma)$, and $\bbq_1, \bbq_2 \in \overline{B^1 \cdot P(\Gamma_G) \cdot C^1}$ such that $\bbp_i \sim_X \bbp_i' \bbq_i \bbp_i''$ for $i=1,2$. By Corollary~\ref{Corollary} there are paths $\bbq_3, \bbq_4 \in \overline{B^1 \cdot \mathcal{P}(\Gamma_G) \cdot C^1}$ such that $\bbp_1' \bbq_3 \sim_X \bbp_2'$, and $\bbp_2'' \sim_X \bbq_4 \bbp_1''$.

\

\noindent \textbf{Claim.}
\noindent \textit{$\bbq_1 \sim_{X} \bbq_3 \bbq_2 \bbq_4$.}
\begin{proof}[Proof of Claim]
First we show that for any pair of parallel paths $(\bbq, \bbq')$ with $\bbq, \bbq'$ both belonging to $B^1 \cdot P(\Gamma_G) \cdot C^1$,  we have $\bbq \sim_{X_G} \bbq'$. Indeed, in this situation by Lemma~\ref{dagger} there exist $b \in B^1$ and $c \in C^1$ with $\bbq, \bbq' \in b \cdot \Gamma_G \cdot c$. Hence we have $\bbq = b \cdot \bbr \cdot c$ for some $\bbr \in \Gamma_G$, and $\bbq' = b \cdot \bbr' \cdot c$ for some $\bbr' \in \Gamma_G$. Since $X_G$ is a homotopy base for $\Gamma_G$ it follows that $\bbr \sim_{X_G} \bbr' $ which, by acting on the left by $b$ and the right by $c$, implies that $\bbq \sim_{X_G} \bbq'$. 

Returning to the proof of the claim, suppose that neither $\iota \bbq_1$ nor $\tau \bbq_1$ belongs to $B \cup C \cup BC$. Then by Lemma~\ref{newlemma}(i) it follows that there exist paths $\bbq_5, \bbq_6$ in $B^1 \cdot P(\Gamma_G) \cdot C^1$ such that $\bbq_1 \sim_X \bbq_5$ and $\bbq_3 \bbq_2 \bbq_4 \sim_X \bbq_6$. By the previous paragraph, since $\bbq_5, \bbq_6 \in B^1 \cdot P(\Gamma_G) \cdot C^1$ are parallel paths we obtain $\bbq_5 \sim_{X} \bbq_6$ and hence
\[
\bbq_1 \sim_X \bbq_5 \sim_{X} \bbq_6 \sim_X \bbq_3 \bbq_2 \bbq_4. 
\] 
Now consider the case that one of $\iota \bbq_1$ or $\tau \bbq_1$ belongs to $B \cup C \cup BC$. Suppose the former, the latter case is dealt with in much the same way. Now let $\bbq'' = \bbq_1 \bbq_4^{-1} \bbq_2^{-1} \bbq_3^{-1}$, so $\bbq''$ is a closed path with $\iota \bbq'' \equiv \tau \bbq'' \in B \cup C \cup BC$. By Lemma~\ref{newlemma}(ii) there exists a path $\bbq'''$ in $B^1 \cdot P(\Gamma_G) \cdot C^1$ and edges $\bbe_1$, $\bbe_2$ of $\overline{B^1 \cdot P(\Gamma_G) \cdot C^1}$ such that $\bbq'' \sim_X \bbe_1 \bbq''' \bbe_2$ and $\bbe_1 \sim_X \bbe_2^{-1}$. Since $(1_{\iota \bbq'''},\bbq''')$ is a pair of parallel paths both from $B^1 \cdot P(\Gamma_G) \cdot C^1$ it follows from the observation above that 
$\bbq''' \sim_{X_G} 1_{\iota \bbq''''}$. We conclude 
\[
\begin{array}{c}
\bbq_1 \sim \bbq_1 (\bbq_3 \bbq_2 \bbq_4)^{-1} (\bbq_3 \bbq_2 \bbq_4) =
\bbq'' (\bbq_3 \bbq_2 \bbq_4) \sim_X 
\bbe_1 \bbq''' \bbe_2 (\bbq_3 \bbq_2 \bbq_4)  \\
\sim_X \bbe_1 1_{\iota \bbq'''} \bbe_2 (\bbq_3 \bbq_2 \bbq_4) =
\bbe_1 \bbe_2 (\bbq_3 \bbq_2 \bbq_4) \sim_X
\bbe_2^{-1} \bbe_2 (\bbq_3 \bbq_2 \bbq_4) \sim
\bbq_3 \bbq_2 \bbq_4.
\end{array}
\]
This covers all possible cases and completes the proof of the claim.
\end{proof}

Using the claim we conclude 
\[
\bbp_1 \sim_X
\bbp_1' \bbq_1 \bbp_1'' \sim_X
\bbp_1' \bbq_3 \bbq_2 \bbq_4 \bbp_1'' \sim_X
\bbp_2' \bbq_2 \bbp_2'' \sim_X
\bbp_2,
\]
completing the proof in the case that $\iota \bbp_1$ represents an element of $S \setminus T$.
If $\iota \bbp_1$ does represent an element of $T$ then the proof is more straightforward. We follow the same steps as above (using part (ii) each time a lemma is applied) except that $\bbq_i \in P(\Gamma_T)$ for $i=1,2,3,4$, and we make use of the fact that $X_T$ is a homotopy base for $\Gamma_T$ to prove $\bbq_1 \sim_{X_T} \bbq_3 \bbq_2 \bbq_4$.
\end{proof}

To complete the proof of Theorem~\ref{comp0simpExt}, suppose that $G$ and $T$ have finite derivation type. Let $\mathcal{P}_G$ and $\mathcal{P}_T$ be finite presentations for $G$ and $T$ respectively. Then $\mathcal{P}_S$ defined above is a finite presentation for $S$. Let $X_G$ and $X_T$ be finite homotopy bases for $\mathcal{P}_G$ and $\mathcal{P}_T$ respectively. Then we have seen above that $X = X_1 \cup X_1' \cup X_2 \cup X_3 \cup X_e \cup X_G \cup X_T$ is a finite homotopy base for $\mathcal{P}_S$. Hence $S$ has finite derivation type. $\qed$


\begin{thebibliography}{10}

\bibitem{Bergman}
G.~M. Bergman.
\newblock The index of a group in a semigroup.
\newblock {\em Pacific J. Math.}, 36:55--62, 1971.

\bibitem{Brown}
K.~S. Brown.
\newblock {\em Cohomology of groups}, volume~87 of {\em Graduate Texts in
  Mathematics}.
\newblock Springer-Verlag, New York, 1982.

\bibitem{CliffordAndPreston}
A.~H. Clifford and G.~B. Preston.
\newblock {\em The algebraic theory of semigroups. {V}ol. {II}}.
\newblock Mathematical Surveys, No. 7. American Mathematical Society,
  Providence, R.I., 1967.

\bibitem{Cremanns5}
R.~Cremanns and F.~Otto.
\newblock Finite derivation type implies the homological finiteness condition
  {${\rm FP}\sb 3$}.
\newblock {\em J. Symbolic Comput.}, 18(2):91--112, 1994.

\bibitem{Cremanns1}
R.~Cremanns and F.~Otto.
\newblock For groups the property of having finite derivation type is
  equivalent to the homological finiteness condition {${\rm FP}\sb 3$}.
\newblock {\em J. Symbolic Comput.}, 22(2):155--177, 1996.

\bibitem{Golubov1}
{\`E}.~A. Golubov.
\newblock Finitely approximable regular semigroups.
\newblock {\em Mat. Zametki}, 17:423--432, 1975.

\bibitem{Gray1}
R.~Gray and N.~Ru{\v{s}}kuc.
\newblock Green index and finiteness conditions for semigroups.
\newblock {\em J. Algebra}, 320(8):3145--3164, 2008.

\bibitem{Gray2}
R.~Gray and S.~J. Pride..
\newblock Homological finiteness properties of monoids, their ideals and maximal subgroups.
\newblock {\em J. Pure Appl. Algebra}, 215:3005--3024, 2011.

\bibitem{Green1}
J.~A. Green.
\newblock On the structure of semigroups.
\newblock {\em Ann. of Math. (2)}, 54:163--172, 1951.

\bibitem{Guba}
V.~Guba and M.~Sapir.
\newblock Diagram groups.
\newblock {\em Mem. Amer. Math. Soc.}, 130(620):viii+117, 1997.

\bibitem{Howie1}
J.~M. Howie.
\newblock {\em Fundamentals of semigroup theory}.
\newblock Academic Press [Harcourt Brace Jovanovich Publishers], London, 1995.
\newblock L.M.S. Monographs, No. 7.

\bibitem{HowieAndRuskuc}
J.~M. Howie and N.~Ru{\v{s}}kuc.
\newblock Constructions and presentations for monoids.
\newblock {\em Comm. Algebra}, 22(15):6209--6224, 1994.

\bibitem{Kilibarda1}
V.~Kilibarda.
\newblock On the algebra of semigroup diagrams.
\newblock {\em Internat. J. Algebra Comput.}, 7(3):313--338, 1997.

\bibitem{K2}
Y.~Kobayashi.
\newblock Finite homotopy bases of one-relator monoids.
\newblock {\em J. Algebra}, 229(2):547--569, 2000.

\bibitem{Lafont}
Y.~Lafont.
\newblock A new finiteness condition for monoids presented by complete
  rewriting systems (after {C}raig {C}. {S}quier).
\newblock {\em J. Pure Appl. Algebra}, 98(3):229--244, 1995.

\bibitem{LyndonandSchupp}
R.~C. Lyndon and P.~E. Schupp.
\newblock {\em Combinatorial group theory}.
\newblock Classics in Mathematics. Springer-Verlag, Berlin, 2001.
\newblock Reprint of the 1977 edition.

\bibitem{Magnus1}
W.~Magnus, A.~Karrass, and D.~Solitar.
\newblock {\em Combinatorial group theory}.
\newblock Dover Publications Inc., Mineola, NY, second edition, 2004.
\newblock Presentations of groups in terms of generators and relations.

\bibitem{Malheiro1}
A.~Malheiro.
\newblock Finite derivation type for {R}ees matrix semigroups.
\newblock {\em Theoret. Comput. Sci.}, 355(3):274--290, 2006.

\bibitem{MalheiroThesis}
A.~Malheiro.
\newblock {\em Finiteness Conditions of Semigroup Presentations}.
\newblock PhD Thesis. Universidade de Lisboa, 2006.

\bibitem{Malheiro2}
A.~Malheiro.
\newblock On trivializers and subsemigroups.
\newblock In {\em Semigroups and formal languages}, pages 188--204. World Sci.
  Publ., Hackensack, NJ, 2007.

\bibitem{Malheiro3}
A.~Malheiro.
\newblock Finite derivation type for large ideals.
\newblock {\em Semigroup Forum}, 78 (3) (2009) 450--485.

\bibitem{OttoSurvey}
F.~Otto and Y.~Kobayashi.
\newblock Properties of monoids that are presented by finite convergent
  string-rewriting systems---a survey.
\newblock In {\em Advances in algorithms, languages, and complexity}, pages
  225--266. Kluwer Acad. Publ., Dordrecht, 1997.

\bibitem{Pride2}
S.~J. Pride.
\newblock Low-dimensional homotopy theory for monoids.
\newblock {\em Internat. J. Algebra Comput.}, 5(6):631--649, 1995.

\bibitem{Pride4}
S.~J. Pride and F.~Otto.
\newblock For rewriting systems the topological finiteness conditions {FDT} and
  {FHT} are not equivalent.
\newblock {\em J. London Math. Soc. (2)}, 69(2):363--382, 2004.

\bibitem{Pride5}
S.~J. Pride and J.~Wang.
\newblock Rewriting systems, finiteness conditions, and associated functions.
\newblock In {\em Algorithmic problems in groups and semigroups ({L}incoln,
  {NE}, 1998)}, Trends Math., pages 195--216. Birkh\"auser Boston, Boston, MA,
  2000.

\bibitem{Ruskuc2}
N.~Ru{\v{s}}kuc.
\newblock Presentations for subgroups of monoids.
\newblock {\em J. Algebra}, 220(1):365--380, 1999.

\bibitem{Ruskuc&Thomas}
N.~Ru{\v{s}}kuc and R.~M. Thomas.
\newblock Syntactic and {R}ees indices of subsemigroups.
\newblock {\em J. Algebra}, 205(2):435--450, 1998.

\bibitem{Squier1}
C.~C. Squier, F.~Otto, and Y.~Kobayashi.
\newblock A finiteness condition for rewriting systems.
\newblock {\em Theoret. Comput. Sci.}, 131(2):271--294, 1994.

\bibitem{Steinberg1}
B.~Steinberg.
\newblock A topological approach to inverse and regular semigroups.
\newblock {\em Pacific J. Math.}, 208(2):367--396, 2003.

\bibitem{Wang1}
J.~Wang.
\newblock Finite complete rewriting systems and finite derivation type for
  small extensions of monoids.
\newblock {\em J. Algebra}, 204(2):493--503, 1998.

\bibitem{Wang7}
J.~Wang.
\newblock Finite derivation type for semigroups and congruences.
\newblock {\em Semigroup Forum}, 75(2):388--392, 2007.

\bibitem{Wang8}
X.~Wang and S.~J. Pride.
\newblock Second order {D}ehn functions of groups and monoids.
\newblock {\em Internat. J. Algebra Comput.}, 10(4):425--456, 2000.

\end{thebibliography}
\end{document}